\documentclass[a4paper, 10pt, leqno]{article}

\usepackage[T1]{fontenc}       
\usepackage[utf8]{inputenc}    
\usepackage[english]{babel}    
\usepackage{lmodern}           
\usepackage{pgfplots}
\pgfplotsset{compat=1.18}
\usepackage{amsmath}           
\usepackage{amssymb}           
\usepackage{amsthm}            
\usepackage{empheq}            
\usepackage{mathtools}         
\usepackage{mathrsfs}          
\usepackage{eucal}             
\usepackage{braket}            
\usepackage{mathtools}         
\mathtoolsset{showonlyrefs}    

\theoremstyle{plain}           
\newtheorem{thm}{Theorem}[section]            
\newtheorem{cor}[thm]{Corollary}              
\newtheorem{lem}[thm]{Lemma}                  
\newtheorem{prop}[thm]{Proposition}           

\theoremstyle{definition}      
\newtheorem{defn}[thm]{Definition}            

\theoremstyle{remark}         
\newtheorem{rem}[thm]{Remark}                 

\numberwithin{equation}{section}  
\setcounter{MaxMatrixCols}{12}    

\usepackage{a4wide}              
\usepackage{mparhack}            
\usepackage{relsize}             
\usepackage{footmisc}            

\usepackage{booktabs}            
\usepackage{multirow}            
\usepackage{caption}             
\usepackage{subfig}              
\usepackage{rotating}            
\captionsetup{
    font=small,                  
    labelfont=bf,                
    tableposition=top            
}

\usepackage{graphicx}            
\usepackage{epsfig}              
\usepackage[all,pdf]{xy}         

\usepackage[numbers]{natbib}     
\usepackage{varioref}            
\usepackage[
    colorlinks=true,             
    linkcolor=black,             
    citecolor=black,             
    urlcolor=blue,               
    pdfborder={0 0 0}            
]{hyperref}

\usepackage{enumerate}           

\usepackage{xcolor}
\usepackage{xcolor}
\usepackage{amssymb}
\usepackage{tikz}

\makeatletter
\newcommand{\twodigits}[1]{\ifnum#1<10 0\the#1\else\the#1\fi}

\newcommand{\now}{%
  \the\year-\twodigits{\month}-\twodigits{\day} \space%
  \count0=\time \divide\count0 by 60 \twodigits{\count0}:%
  \count0=\time \count1=\count0 \divide\count1 by 60 \multiply\count1 by 60 %
  \advance\count0 by -\count1 \twodigits{\count0}%
}

\newcommand{\checkpoint}[2][The proof continues here...]{%
  \par\vspace{2em}
  \noindent\begin{center}
    \begin{tikzpicture}
      \fill[gray!20] (0.2,-0.2) rectangle (12.2, 3.2);
      \draw[fill=yellow!10, draw=orange!50!black, thick] (0,0) rectangle (12,3);
      
      \fill[red!70!black] (6, 3.1) circle (0.15);
      \draw[red!70!black, ultra thick] (6, 3.1) -- (6, 2.8);
      
      \node[anchor=west] at (0.3, 2.6) {\large\textbf{\textsf{\color{orange!60!black}MISSION LOG}}};
      \node[anchor=east] at (11.7, 2.6) {\tiny\texttt{LOC: LINE \the\inputlineno}};
      
      \node[anchor=north west, text width=11cm] at (0.5, 2.3) {
        \small \textbf{Summary:} #2
      };
      
      \node[anchor=north west, text width=11cm] at (0.5, 1.3) {
        \small \textbf{Next Relay:} \textcolor{blue!60!black}{\itshape #1}
      };
      
      \draw[gray!30] (0.5, 0.5) -- (11.5, 0.5);
      \node[anchor=east] at (11.7, 0.25) {\tiny\textsc{Authored at: \now}};
    \end{tikzpicture}
  \end{center}
  \vspace{2em}
}
\makeatother

\newcommand{\inpro}[2]{\left\langle #1,#2\right\rangle} 
\newcommand{\spfl}[1]{\operatorname{sf}\left(#1\right)} 
\newcommand{\email}[1]{\href{mailto:#1}{\textsf{#1}}}   
\newcommand{\Id}{\operatorname{Id}} 


\renewcommand{\geq}{\geqslant}
\renewcommand{\hat}{\widehat}
\renewcommand{\tilde}{\widetilde}

\newcommand{\Z}{\mathbb{Z}}        
\newcommand{\R}{\mathbb{R}}        
\newcommand{\C}{\mathbb{C}}        




\let\d\relax                       
\newcommand{\d}{\mathrm{d}}

\DeclareMathOperator{\sgn}{sgn}      
\DeclareMathOperator{\sign}{sign}    
\DeclareMathOperator{\dom}{dom}      

\newcommand{\Lagr}{\Lambda}               
\newcommand{\Lag}[1]{\Lagr({#1})}         

\newcommand{\iMor}{\mathrm{m^-}}                
\newcommand{\coiMor}{\mathrm{m^+}}              
\newcommand{\iCLM}{\mu^{\scriptscriptstyle{\mathrm{CLM}}}} 


\newcommand{\Sym}[1]{\mathrm{Sym}(#1,\R)}       
\newcommand{\Real}{\mathrm{Re}}                 



\usepackage{bm} 




\newcommand{\noo}[1]{\overset {\mbox{%
\lower1pt\hbox{${\scriptscriptstyle o}$}}}n^{\mbox{%
\lower2pt\hbox{$\scriptscriptstyle #1$}}}} 
\newcommand{\cic}[1]{ C_{#1}}               
\newcommand{\defeq}{\coloneq}

\title{Instability of the Standing Pulse in Skew-Gradient Systems and Its Application to FitzHugh-Nagumo Type Systems}

\author{Jing Li,\quad Qin Xing\thanks{The author is supported  by  the National Natural Science Foundation of China (No. 12201278) and the Natural Science Foundation of Shandong Province (No. ZR2022QA076)},\quad Ran Yang \thanks{The author is supported by  the National Natural Science Foundation of China (No. 12001098, No. 42264007) and the  Natural Science Foundation of Jiangxi Province(No.20232BAB211005)}}

\date{\today}


\date{\today}
\begin{document}
 \maketitle

\begin{abstract}
  Classical results from Sturm-Liouville theory establish that the Morse index of a one-dimensional Sturm-Liouville operator defined on $\mathbb{R}$ is equal to the number of its associated conjugate points. Recent advancements by Beck et al.~\cite{BCJLM18} have extended these results to higher-dimensional Sturm-Liouville operators on $\mathbb{R}$, utilizing the Maslov index to characterize the spectral stability of nonlinear waves in multi-component systems. 
In this paper, we extend this framework further to non-self-adjoint settings by investigating skew-gradient reaction-diffusion systems. By utilizing the Maslov index and spectral flow, we derive an instability criterion for standing pulses. This approach bridges the gap between variational methods and the stability index in systems where the standard self-adjoint structure is absent. 
As a primary application, we apply our results to FitzHugh-Nagumo type systems, where the reaction terms for both the activator and inhibitor exhibit intrinsic nonlinearities. This provides a robust topological method to account for the influence of nonlinear inhibition on pulse stability in the non-self-adjoint regime.
	
\vskip0.2truecm
\noindent
\textbf{AMS Subject Classification: 53D12, 37B30, 58J30, 35K57,37J25.}
\vskip0.1truecm
\noindent
\textbf{Keywords:} Maslov index,  spectral flow, reaction-diffusion equation, standing
pulse, instability, FitzHugh-Nagumo Equation.
\end{abstract}


\section{Introduction and description of the problem}

The stability of localized structures in reaction-diffusion systems has been a subject of intense investigation for decades, driven by its fundamental role in pattern formation \cite{evans1972nerve, yanagida1985stability, flores1991stability, van2008pulse, CH14, BCJLM18}. This line of inquiry originated from the seminal work of Turing \cite{turing1990chemical}, who demonstrated that spatially periodic structures---now known as {Turing patterns}---can emerge from a uniform background state through a symmetry-breaking instability driven by diffusion.

A pervasive observation in physical and biological systems exhibiting such patterns is that the diffusion coefficients of the interacting species are unequal, often differing by several orders of magnitude. This empirical evidence has fostered the long-standing conjecture that  {differential diffusion} is a necessary condition for the stability, and thus the physical realization, of Turing patterns. Mathematically, the stability of periodic patterns is often intimately linked to the robustness of their constituent localized  {pulse solutions}. Consequently, proving the inherent instability of pulses in the absence of diffusion disparity  offers a rigorous pathway toward validating the Turing pattern conjecture for periodic structures \cite{BCJLM18}.

Motivated by these considerations, this paper investigates the spatio-temporal dynamics and stability properties of a class of reaction-diffusion equations characterized by a  {skew-gradient structure}:
\begin{align}\label{eq:r.d.eq}
    Mw_t = D w_{xx} + Q \nabla V(w),
\end{align}
where $w(x,t) = (w_1, \dots, w_n)^T \in \mathbb{R}^n$ denotes the state vector. The diagonal matrices $M = \text{diag}(m_i)$ and $D = \text{diag}(d_i)$ represent the  {temporal response rates} and  {spatial diffusion coefficients}, respectively. The reaction kinetics are governed by the gradient of a scalar potential $V: \mathbb{R}^n \rightarrow \mathbb{R}$, modulated by:
\[
Q = \begin{pmatrix}
    \Id_j & 0 \\
    0 & -\Id_{n-j}
\end{pmatrix}.
\]
As introduced by Yanagida \cite{yanagida2002mini}, this configuration defines a  {skew-gradient system} that effectively captures the essential features of  {activator-inhibitor} dynamics. Here, the first $j$ components act as activators, evolving toward increasing values of $V$, while the remaining $n-j$ components function as inhibitors that counteract this growth.

The presence of the matrix $Q$ is pivotal for the existence of  {standing pulses}. In a conventional gradient system (where $Q = \Id$), the dynamics are driven by energy minimization, typically resulting in the coarsening of domains or a collapse to global equilibria. In contrast, the skew-gradient framework introduces an inhibitory feedback loop that can stabilize localized excitations against the homogenizing effect of diffusion. This non-self-adjoint structure not only allows for the existence of pulse solutions but also provides a rich mathematical setting to analyze how the interplay between the potential $V$ and uniform diffusion $D$ leads to the destabilization of these structures.

As in \eqref{eq:r.d.eq}, we assume that \( w \equiv 0 \) is always an equilibrium solution of \eqref{eq:r.d.eq}. A standing pulse solution of \eqref{eq:r.d.eq} is a non-constant wave function \( w \) satisfying
\begin{align}\label{eq:state equation}
	\begin{cases}
		D  w'' + Q \nabla V(w) = 0, \\
		\lim\limits_{|x| \to \infty}  w(x) = \lim\limits_{|x| \to \infty} \ w'(x) = 0,
	\end{cases}
\end{align}
where throughout this paper, $'$ denotes differentiation with respect to \( x \).  and define \( \C^{-} = \{ z \in \C \mid \Real z < 0 \} \), where \( \Real z \) denotes the real part of \( z \).

Let \( w_0 \) be a standing pulse solution of \eqref{eq:state equation}. The stability of \( w_0 \) is determined by the equation
\begin{align}\label{eq:Weighted eigenvalue pro.}
    D \phi''(x) + Q B(x) \phi(x) = \lambda M \phi(x)
\end{align}
or its equivalent eigenvalue problem
\begin{align}\label{eq:eigenvalue pro.}
	\mathcal{L} \phi(x) = \lambda \phi(x),
\end{align}
where the operator \( \mathcal{L} \) is given by
\[
\mathcal{L} = M^{-\frac{1}{2}} \left( D \frac{\d^2}{\d x^2} + Q B(x) \right) M^{-\frac{1}{2}},
\]
with \( B(x) = \nabla^2 V\left( w_0 \right) \). Therefore, the limit $\lim\limits_{|x| \to \infty} B(x) = B(\infty)$ is well-defined. Additionally, there exists a constant \(C_1 > 0\) such that
\begin{align}\label{eq:constant C_1}
	|\langle Q B(x)  v, v \rangle| \leq C_1 |v|^2 \quad \text{for all} \ (x, v) \in \mathbb{R} \times \mathbb{R}^n.
\end{align}

Since \eqref{eq:state equation} is an autonomous system, the translation invariance of \( w_0(x) \) with respect to \( x \) implies that zero is always an eigenvalue of \eqref{eq:eigenvalue pro.}, with \(  w'_0(x) \) as the corresponding eigenfunction. In such a system, a standing pulse \( w_0(x) \) is said to be non-degenerate if zero is a simple eigenvalue of \eqref{eq:eigenvalue pro.}. 

\begin{defn}\cite{CH14}\label{def:stability}
	A non-degenerate standing pulse \( w_0 \) is \emph{spectrally stable} if all non-zero eigenvalues of \eqref{eq:eigenvalue pro.} lie in \( \mathbb{C}^- \).
\end{defn}

This paper investigates the stability of standing pulses of \eqref{eq:r.d.eq}. Much of the analysis relies on understanding the detailed spectral properties of the linearized operator $\mathcal{L}$ obtained by linearizing the reaction-diffusion equation along a standing pulse~\cite{jones1990topological,BCJLM18,cornwell2018existence,X21}. Over the past few decades, the Evans function has been the primary tool for analyzing the spectral properties of linearized operators in a wide range of partial differential equations, including reaction-diffusion systems, as well as other classes such as nonlinear Schrödinger equations and KdV-type equations~\cite{jones1985instability,jones1990topological,pego1992eigenvalues,sandstede2002stability,zumbrun2005stability,barker2018evans}.

 Yanagida\cite{yanagida2002mini}  introduced the skew-gradient structure to characterize systems that resemble gradient systems but possess nonlinearities with mixed signs, effectively modeling the complex dynamics of activator-inhibitor pairs. By defining the geometrical orientation of a pulse and employing the Evans function, Yanagida established a fundamental instability criterion: a stationary pulse becomes unstable when the temporal response rates (time constants) of the inhibitory components are sufficiently large relative to the activators. This seminal work provided the theoretical basis for understanding how the interplay between inhibitory feedback and disparate time scales drives the destabilization of localized patterns. Building upon this framework, Chen and Hu \cite{CH14}investigated standing pulse solutions in FitzHugh-Nagumo equations and more general skew-gradient systems. They observed that standing pulses in such configurations correspond to homoclinic orbits of a second-order Hamiltonian system, allowing for the application of Maslov index  to provide a rigorous stability analysis. Their work bridged the gap between the variational structure of the reaction terms and the spectral properties of the linearized operator, establishing a robust framework for determining the stability of localized patterns.
 
 Since the linearized problem \eqref{eq:Weighted eigenvalue pro.} inherits a Hamiltonian structure, the Maslov index has emerged as a powerful alternative tool for analyzing spectral properties \cite{CH07, CH14, cornwell2018existence}. The application of the Maslov index to the stability of solitary waves was pioneered by Jones \cite{jones1985instability} and Bose and Jones \cite{bose1995stability}, offering a higher-dimensional generalization of Sturmian theory. In this paper, we extend these methodologies by utilizing the index theory developed in \cite{HP17} and \cite{HPWX20}. We employ the Maslov index--which, in this specific framework, admits only positive crossing forms-- to establish a lower bound on the number of unstable eigenvalues for the operator $\mathcal{L}$ defined in \eqref{eq:eigenvalue pro.}. This bound serves as a fundamental instability criterion for the system \eqref{eq:r.d.eq}. Furthermore, we utilize spectral flow to analyze the evolution of the zero eigenvalue in response to variations in the temporal response rates $M$. By tracking these spectral crossings, we establish an additional instability result that complements the Maslov index approach, providing a more comprehensive understanding of the spatio-temporal dynamics in skew-gradient systems.

Let \( z(x) = (QD\psi', \psi)^\top \). Then, the weighted eigenvalue problem \eqref{eq:Weighted eigenvalue pro.} can be transformed into the following linear Hamiltonian system:
\begin{equation}\label{eq:linear hamiltonian system}
    \begin{cases}
        z'(x) = J A_\lambda(x) z(x), \quad x \in \mathbb{R}, \\
        \lim\limits_{|x| \to \infty} z(x) = 0,
    \end{cases}
\end{equation}
where the coefficient matrix is given by
\[
A_\lambda(x) = \begin{pmatrix}
    (QD)^{-1} & 0 \\
    0 & B(x) - \lambda Q M
\end{pmatrix}.
\]
We note that the asymptotic matrix \( A_\lambda(\infty) \defeq \lim_{|x| \to \infty} A_\lambda(x) \) is well-defined. Let \( \mathcal{F}_\lambda = -J \frac{\d}{\d x} - A_\lambda(x) \) denote the associated Hamiltonian differential operator.
In this paper, we impose the following structural assumptions:
\begin{enumerate}
    \item[ {(H1)}] For all \( v \in \mathbb{R}^n \setminus \{0\} \), it holds that \( \langle QB(\infty)v, v \rangle < 0 \).
    \item[ {(H2)}] For all \( x \in \mathbb{R} \) and \( v \in V^{-}(Q) \setminus \{0\} \), it holds that \( \langle B(x)v, v \rangle > 0 \), where $V^-(Q)$ denote the  negative spectral subspaces spanned by the eigenvectors corresponding to eigenvalues with  negative real parts.
\end{enumerate}

\noindent In conclusion, conditions  {(H1)} and  {(H2)} are not merely technical assumptions but are  {naturally satisfied} by a broad class of physically relevant activator-inhibitor systems.  {(H1)} ensures the  {hyperbolicity} of the associated linear Hamiltonian system at infinity, reflecting the fundamental requirement that the background state must be stable for localized patterns to persist. Simultaneously,  {(H2)} guarantees the global consistency of the inhibitory feedback mechanism, which manifests as the monotonicity of the Lagrangian subspace evolution. Together, these conditions allow the Maslov index to rigorously bridge the gap between the topological properties of the phase space and the spectral instability of the standing pulse.

\begin{rem}\label{rem:C_2 and C_3}
Under assumption  {(H1)}, since the quadratic form \( v \mapsto \langle QB(\infty)v, v \rangle \) is continuous on the unit sphere \( S^{n-1} \defeq \{ v \in \mathbb{R}^n : |v|=1 \} \), it attains a strictly negative maximum. Consequently, there exists a constant \( C_2 > 0 \) such that
\[
\langle QB(\infty)v, v \rangle \leq -C_2 |v|^2, \qquad \forall\, v \in \mathbb{R}^n.
\]
Similarly, if  {(H2)} holds, we consider the function 
\[
F(x, v) = \langle B(\tan x)v, v \rangle, \quad (x, v) \in \left[-\frac{\pi}{2}, \frac{\pi}{2}\right] \times \{v \in V^-(Q) : |v|=1 \}.
\]
Since \( F(x, v) \) is continuous on a compact set, there exists a constant \( C_3 > 0 \) such that \( F(x, v) \geq C_3 \), implying
\[
\langle B(x)v, v \rangle \geq C_3|v|^2
\]
for all \( x \in \mathbb{R} \) and \( v \in V^-(Q) \).
\end{rem}

Let \( \Phi_{\lambda}(x, \tau) \) be the fundamental matrix solution of \eqref{eq:linear hamiltonian system} satisfying \( \Phi_{\lambda}(\tau, \tau) = I_{2n} \). The stable and unstable subspaces at \( \tau \in \mathbb{R} \) are defined as:
\[
E_\lambda^s(\tau) = \left\{ \xi \in \mathbb{R}^{2n} : \lim_{x \to +\infty} \Phi_{\lambda}(x, \tau) \xi = 0 \right\},
\]
\[
E_\lambda^u(\tau) = \left\{ \xi \in \mathbb{R}^{2n} : \lim_{x \to -\infty} \Phi_{\lambda}(x, \tau) \xi = 0 \right\}.
\] 
For brevity, we write \( E^s(\tau) \) and \( E^u(\tau) \) for \( E_0^s(\tau) \) and \( E_0^u(\tau) \), respectively. As shown in \cite[Lemma 3.1]{HP17}, these subspaces \( E_\lambda^s(\tau) \) and \( E_\lambda^u(\tau) \) are Lagrangian for all \( (\tau, \lambda) \in \mathbb{R} \times [0, +\infty) \).

Let \(\sigma_p(\mathcal{L})\) denote the set of isolated eigenvalues of \(\mathcal{L}\) with finite multiplicity, and let \(\sigma_{\text{ess}}(\mathcal{L}) = \sigma(\mathcal{L}) \setminus \sigma_p(\mathcal{L})\) represent the essential spectrum of \(\mathcal{L}\). According to \cite[Lemma 3.1.10]{kapitula2013spectral}, the essential spectrum is characterized by
\[
\sigma_{\text{ess}}(\mathcal{L}) = \left\{\lambda \in {\C} \mid A_\lambda(\infty) \text{ has an eigenvalue } \mu \in i \mathbb{R} \right\}.
\]

From Lemma \ref{lem:matrix hyperbolic}, under (H1), we have that $\sigma_{\text{ess}}(\mathcal{L})\subset C^-\defeq\{z\in \Z|\Real z<0\}$. So, to analysis the stable of the standing pulse, we only consider $\sigma(\mathcal{L})$.

\begin{defn}\label{def:stability index}
    Let \( w_0 \) be a standing pulse of \eqref{eq:r.d.eq}. Define the stable index of \( w_0 \) as
    \[
    i(w_0) := \sum_{\tau \in \mathbb{R}}\dim \left( \Lambda_R \cap E^u(\tau) \right),
    \]
    where \( \Lambda_R = \left\{ \left. \begin{pmatrix}
        p\\q
    \end{pmatrix} \right| p \in V^+(Q),\; q \in V^-(Q) \right\} \).
\end{defn}

We now establish the following instability criterion for the standing pulses of the reaction-diffusion system \eqref{eq:r.d.eq}.

\begin{thm}\label{thm:mainly result}
    Under conditions (H1) and (H2), let \( w_0 \) be a standing pulse of \eqref{eq:r.d.eq}. The pulse \( w_0 \) is spectrally unstable if either of the following conditions is satisfied:
    \begin{itemize}
        \item[(1)] The stability index $i(w_0)$ is strictly positive, i.e., \( i(w_0) > 0 \).
        \item[(2)] $\int_{-\infty}^{\infty}\inpro{QMw'_0(x)}{w'_0(x)}\d x<0$.
    \end{itemize}
\end{thm}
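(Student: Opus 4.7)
My approach is to realize both criteria through Maslov-index and crossing-form computations for the Lagrangian paths associated with the Hamiltonian reformulation \eqref{eq:linear hamiltonian system}, adapting the framework of \cite{HP17,HPWX20} to the skew-gradient setting. The linchpin is hypothesis (H2): at any crossing $\xi=(p,q)^{\top}\in E_\lambda^u(\tau_0)\cap\Lambda_R$ with $p\in V^+(Q)$ and $q\in V^-(Q)$, a direct computation using the block structure of $A_\lambda$ reduces the crossing form of $\tau\mapsto E_\lambda^u(\tau)$ at $\tau_0$ to $\langle B(\tau_0)q,q\rangle$, which is strictly positive by (H2) together with Remark~\ref{rem:C_2 and C_3}. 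Thus every crossing of $E_\lambda^u(\cdot)$ against $\Lambda_R$ is simple and positive, so that
\[
\iMas\!\bigl(E_0^u(\cdot),\Lambda_R;\R\bigr) \;=\; \sum_{\tau\in\R}\dim\bigl(E^u(\tau)\cap\Lambda_R\bigr) \;=\; i(w_0).
\]

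\textbf{Part (1).} To convert $i(w_0)$ into a lower bound for positive real eigenvalues, I apply homotopy invariance to the Maslov index of the two-parameter family $(\tau,\lambda)\mapsto E_\lambda^u(\tau)$ (against $\Lambda_R$) around the boundary of the rectangle $[-R,R]\times[0,\Lambda]$, whose total Maslov index vanishes. Three of the four edges contribute trivially for $R,\Lambda$ large: the top edge $\{\lambda=\Lambda\}$ by an a priori upper bound on the positive real point spectrum of $\mathcal L$ coming from \eqref{eq:constant C_1}; the vertical edges $\{\tau=\pm R\}$ by the hyperbolicity of $A_\lambda(\pm\infty)$ guaranteed by (H1), combined with a uniform-in-$\lambda\in[0,\Lambda]$ transversality of the limiting unstable subspaces $V^u(JA_\lambda(\pm\infty))$ to $\Lambda_R$. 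The bottom contribution, equal to $i(w_0)$, is therefore balanced by the remaining vertical edge, whose intersections correspond (via the standard identification of eigenvalues with Lagrangian intersections $E_\lambda^u(\tau)\cap E_\lambda^s(\tau)$, together with the asymptotic form of $E_\lambda^s$ under (H1)) to positive real eigenvalues of $\mathcal L$ counted with multiplicity. This yields
\[
\#\bigl\{\lambda\in\sigma_p(\mathcal L)\cap\R\,:\,\lambda>0\bigr\} \;\ge\; i(w_0),
\]
so that $i(w_0)>0$ produces spectral instability.

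\textbf{Part (2).} Here the translation zero-mode $w_0'$ gives $\xi_0(x)\defeq(QDw_0''(x),w_0'(x))^{\top}\in E_0^u(\tau)\cap E_0^s(\tau)$ for every $\tau$, so $\lambda=0$ is a nontrivial Lagrangian crossing of the path $\lambda\mapsto\bigl(E_\lambda^u(\tau_0),E_\lambda^s(\tau_0)\bigr)$. Since $\partial_\lambda A_\lambda=\mathrm{diag}(0,-QM)$, evaluating the associated crossing form on $\xi_0$ reduces to
\[
\Gamma(\xi_0) \;=\; -\!\int_{-\infty}^{\infty}\!\bigl\langle\partial_\lambda A_0\,\xi_0(x),\xi_0(x)\bigr\rangle\,\dd x \;=\; \int_{-\infty}^{\infty}\langle QMw_0'(x),w_0'(x)\rangle\,\dd x.
\]
Under the hypothesis, $\Gamma(\xi_0)<0$, giving a strictly sign-definite crossing at $\lambda=0$. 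By the Maslov/spectral-flow correspondence in the Robbin--Salamon sense (adapted to the present Hamiltonian setting), this sign-definite crossing forces a real eigenvalue branch of $\mathcal L$ to emerge from $0$ into the positive real axis, producing spectral instability.

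\textbf{Main obstacle.} The principal technical difficulty in Part (1) is the rigorous vanishing of the vertical-edge Maslov contributions, which requires combining (H1) with a uniform-in-$\lambda$ transversality between the asymptotic Lagrangians $V^u(JA_\lambda(\pm\infty))$ and the reference $\Lambda_R$; this is nontrivial precisely because $\Lambda_R$ is a convenient but not the asymptotic Lagrangian. In Part (2), the delicate step is extracting the bifurcating eigenvalue from the a priori degenerate crossing at $\lambda=0$, since the zero mode is always present: the sign of $\Gamma(\xi_0)$ must unambiguously determine the direction of the bifurcation despite the non-self-adjointness of $\mathcal L$ inherited from the skew-gradient structure. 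Both parts ultimately rely on the positive-crossing property supplied by (H2), which permits counting rather than merely signing intersections.
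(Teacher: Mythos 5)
Your overall architecture --- (H2) makes every crossing of $E^u_\lambda(\tau)$ with $\Lambda_R$ positive so that $\iCLM(\Lambda_R,E^u(\tau))$ counts intersections and equals $i(w_0)$, a two-parameter homotopy in $(\tau,\lambda)$ converts this into a count of positive real eigenvalues, and the crossing form on the translation mode at $\lambda=0$ yields criterion (2) --- is the same as the paper's. However, the two steps you treat as routine are exactly where the paper has to work hardest, and as written they do not close. In Part (1), the right edge of your rectangle is the path $\lambda\mapsto E^u_\lambda(R)$ tested against the \emph{fixed} plane $\Lambda_R$; its crossings are not eigenvalues of $\mathcal{L}$. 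Eigenvalues correspond to intersections $E^u_\lambda(\tau)\cap E^s_\lambda(\tau)$, so you must trade $\Lambda_R$ for $E^s_\lambda(R)$, which costs a H\"ormander index whose evaluation requires $\lim_{\tau\to+\infty}E^u_0(\tau)$. That limit is precisely what is \emph{not} available at $\lambda=0$: the translation kernel forces $E^u(0)\cap E^s(0)\neq\{0\}$, so Lemma \ref{lem:space convergence}(ii) does not apply and $E^u_0(\tau)$ need not converge to $V^+(JA_0(\infty))$. This is the obstruction the paper names explicitly, and the whole $\epsilon$-perturbation $B_\epsilon=B-\epsilon I$ together with the two-sided sandwich of Lemmas \ref{lem:maslov>=sf} and \ref{lem:maslov<=sf} exists to remove it. Your ``main obstacle'' paragraph instead flags the transversality of $V^{\pm}(JA_\lambda(\infty))$ to $\Lambda_R$, which is the easy part (Lemma \ref{lem:transversal to LR}).

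In Part (2), the computation $\Gamma(\xi_0)=\int_{-\infty}^{\infty}\langle QMw_0'(x),w_0'(x)\rangle\,\d x$ agrees (up to sign convention) with Lemma \ref{lem:sf<=0}, but the inference ``a real eigenvalue branch of $\mathcal{L}$ emerges from $0$ into the positive real axis'' is not a local fact. The crossing form only says that the zero eigenvalue of the self-adjoint family $\mathcal{F}_\lambda$ leaves $0$ in a definite direction for small $\lambda>0$; nothing local forces it, or any other eigenvalue of $\mathcal{F}_\lambda$, to return to $0$ at some $\lambda^*>0$, which is what a positive real eigenvalue of $\mathcal{L}$ means. The paper closes this with global bookkeeping: $\spfl{\mathcal{F}_\lambda,\lambda\in[0,\hat\lambda]}=i(w_0)\geq 0$ from Proposition \ref{pro:spectral flow}, $\spfl{\mathcal{F}_\lambda,\lambda\in[0,\lambda_0]}\leq -1$ from Lemma \ref{lem:sf<=0}, and then path additivity together with the bound \eqref{eq:bounded for sf} gives $N_+(\mathcal{L})\geq i(w_0)+1\geq 1$. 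Without the global identity of Proposition \ref{pro:spectral flow} --- which itself rests on the machinery you skipped in Part (1) --- criterion (2) is not established. To repair the proposal you would need to either import the $\epsilon$-regularization argument or supply an independent proof that the total spectral flow over $[0,\hat\lambda]$ is nonnegative.
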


As an application, the related result is applied to the following FitzHugh-Nagumo type system:
\begin{subequations}\label{eq:f.n.eq.}
    \begin{empheq}[left=\empheqlbrace]{align}
      u_t &= d u_{xx} + f(u) - v, \label{eq:f.n.eq.1} \\
      \tau v_t &= v_{xx} - \gamma v - v^3 + u, \label{eq:f.n.eq.2}
    \end{empheq}
\end{subequations}
where \( f(u) = u(1 - u)(u - \beta) \), and \( d, \tau, \gamma \), and \( \beta \) are positive constants. This system is of activator-inhibitor type, with nonlinear structures inherent in the reaction terms of both the activator and inhibitor. 

Observe that the system \eqref{eq:f.n.eq.1}--\eqref{eq:f.n.eq.2} possesses a skew-gradient structure with potential:
\[
V(u,v) = \frac{1}{2} \gamma v^2 + \frac{1}{4} v^4 - u v - \frac{1}{4} u^4 + \frac{1}{3}(1 + \beta) u^3  -\frac{1}{2} \beta u^2.
\]

The calculus of variations was employed in \cite{choi2021existence} to establish the existence of standing pulses for \eqref{eq:f.n.eq.1} and \eqref{eq:f.n.eq.2}. 
As an application of case (1) in Theorem \ref{thm:mainly result}, we establish the following instability result:

\begin{thm}\label{thm:unstable by solution}
     Let $w_0=(u,v)^\top$ be a standing pulse of \eqref{eq:f.n.eq.1} and \eqref{eq:f.n.eq.2}. If there exists a point $x_0 \in \mathbb{R}$ such that $u'(x_0)=v''(x_0)=0$, then the pulse $w_0$ is unstable.
\end{thm}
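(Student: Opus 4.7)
The plan is to apply part (1) of Theorem \ref{thm:mainly result} by exhibiting a nonzero element of $\Lambda_R \cap E^u(x_0)$, which forces $i(w_0) \geq 1 > 0$. For the FitzHugh-Nagumo system we have $n = 2$ and $Q = \diag(1, -1)$, so $V^+(Q) = \Span(e_1)$ and $V^-(Q) = \Span(e_2)$, giving
$$\Lambda_R = \{(p_1, 0, 0, q_2)^\top \in \mathbb{R}^4 : p_1, q_2 \in \mathbb{R}\}.$$

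The natural candidate is the translation-invariance zero mode. Differentiating the profile equation $Dw_0'' + Q\nabla V(w_0) = 0$ yields $Dw_0''' + QB(x)w_0' = 0$, so $\phi \defeq w_0'$ solves \eqref{eq:Weighted eigenvalue pro.} at $\lambda = 0$. The corresponding Hamiltonian variable
$$z_0(x) \defeq (QDw_0''(x),\, w_0'(x))^\top$$
is then a bounded solution of \eqref{eq:linear hamiltonian system} at $\lambda = 0$ that decays at both ends since $w_0, w_0' \to 0$; in particular $z_0(\tau) \in E^u(\tau) \cap E^s(\tau)$ for every $\tau \in \mathbb{R}$. Evaluating at the point $x_0$ supplied by the hypothesis, $u'(x_0) = v''(x_0) = 0$ gives
$$w_0'(x_0) = (0,\, v'(x_0))^\top \in V^-(Q), \qquad QDw_0''(x_0) = (d\,u''(x_0),\, 0)^\top \in V^+(Q),$$
so $z_0(x_0) \in \Lambda_R$ automatically.

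The only real subtlety, and the step I would expect to be the main obstacle, is showing that $z_0(x_0) \neq 0$. I would argue by contradiction: if $z_0(x_0) = 0$ then in addition $v'(x_0) = 0$ and $u''(x_0) = 0$, so together with the hypothesis we obtain $w_0'(x_0) = 0$ and $w_0''(x_0) = 0$. Substituting into the profile equation forces $\nabla V(w_0(x_0)) = 0$, which means that $(w_0(x_0),\, 0)$ is a rest point of the first-order ODE system governing $(w_0, w_0')$. Uniqueness of solutions then makes $w_0$ constant, and combined with the asymptotic decay $w_0(x) \to 0$ this yields $w_0 \equiv 0$, contradicting the fact that $w_0$ is a nonconstant standing pulse.

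Combining the two steps, $z_0(x_0)$ is a nonzero element of $\Lambda_R \cap E^u(x_0)$, so $\dim(\Lambda_R \cap E^u(x_0)) \geq 1$ and hence $i(w_0) \geq 1 > 0$. The instability of $w_0$ then follows immediately from part (1) of Theorem \ref{thm:mainly result}. (Note that the FitzHugh-Nagumo potential trivially satisfies (H1) and (H2): the symmetric part of $QB(\infty)$ is $-\diag(\beta, \gamma)$, which is negative definite since $\beta, \gamma > 0$, while for $v = (0, v_2)^\top \in V^-(Q)$ one has $\langle B(x)v, v\rangle = (\gamma + 3v(x)^2)v_2^2 > 0$.)
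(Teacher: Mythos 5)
Your proposal is correct and follows essentially the same route as the paper: both exhibit the translation mode $z(x)=(QDw_0''(x),w_0'(x))^\top$ as a nonzero element of $\Lambda_R\cap E^u(x_0)$ and invoke part (1) of Theorem \ref{thm:mainly result}. In fact your argument is slightly more complete, since the paper simply asserts $z(x_0)\neq 0$ while you justify it via the profile equation and uniqueness for the first-order system.
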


\begin{rem}
    The significance of Theorem \ref{thm:unstable by solution} lies in its ability to predict instability directly from the geometric profile of the standing pulse without requiring explicit spectral computation. The coincidence of the activator's peak ($u'=0$) and the inhibitor's inflection point ($v''=0$) suggests a localized failure of the inhibitory feedback mechanism. In such a geometric configuration, the restorative force provided by the inhibitor is insufficient to counterbalance the autocatalytic growth of $u$, leading to the migration or collapse of the pulse profile. This criterion differs from results for gradient-type systems; for instance, in \cite{BCJLM18}, it is proven that if there exists $x_0$ such that $u'(x_0)=v'(x_0)=0$, then the standing pulse is unstable. Our result demonstrates that for non-gradient systems, the higher-order derivative of the inhibitor plays a critical role in the stability index.
\end{rem}

\begin{figure}[h]
\centering
\begin{tikzpicture}
\begin{axis}[
    width=12cm, height=7cm,
    axis lines=middle,
    xmin=-5, xmax=10,
    ymin=-1.2, ymax=1.5,
    xlabel={$x$},
    ylabel={Amplitude},
    domain=-4:6,
    samples=300, 
    smooth,
    xtick={1}, 
    xticklabels={$x_0$},
    ytick={0},
    legend style={at={(0.95,0.9)}, anchor=north east, draw=none, fill=none, font=\small},
    axis line style={thick},
]

\addplot [red, ultra thick] {1.2 * exp(-(x-1)^2 / (1.5 + 0.8*tanh(x-1)))};
\addlegendentry{Asymmetric $u(x)$}

\addplot [blue, ultra thick] {1.8 * (x-1) * exp(-(x-1)^2 / 2)};
\addlegendentry{Asymmetric $v(x)$}

\draw[dashed, gray!60] (axis cs:1,-0.2) -- (axis cs:1,1.3);

\coordinate (Upoint) at (axis cs:1, 1.2);
\draw[red!80!black, <-, thick] (Upoint) -- (axis cs:-0.5, 1.4) 
    node[left, font=\footnotesize] {$u'(x_0)=0$ (Peak)};

\node[circle, fill=blue, inner sep=1.2pt] (Vpoint) at (axis cs:1, 0) {};
\draw[blue!80!black, <-, thick] (Vpoint) -- (axis cs:2.5, -0.15) 
    node[right, font=\footnotesize] {$v''(x_0)=0$ (Inflection)};

\node[font=\tiny, color=gray] at (axis cs:-3.5, 0.1) {$0\gets u,v $};
\node[font=\tiny, color=gray] at (axis cs:5.5, 0.1) {$ u,v \to 0$};

\end{axis}
\end{tikzpicture}
\caption{Profile of a standing pulse illustrating the geometric condition $u'(x_0)=v''(x_0)=0$ for Theorem \ref{thm:unstable by solution}.}
\end{figure}

As the application of case (2) of Theorem \ref{thm:mainly result}, we obtain the following:
\begin{thm}\label{thm:unstablity condition for example}
    Let $w_0=(u,v)^\top$ be a standing pulse of \eqref{eq:f.n.eq.1} and \eqref{eq:f.n.eq.2}. Let $\tau_0=\frac{\int_{-\infty}^{\infty}|u'(x)|^2\d x}{\int_{-\infty}^{\infty}|v'(x)|^2\d x}$, then $(u,v)^\top$ is unstable if $\tau>\tau_0$.
\end{thm}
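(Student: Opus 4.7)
The plan is to view Theorem \ref{thm:unstablity condition for example} as a direct specialization of case (2) of Theorem \ref{thm:mainly result} to the FitzHugh--Nagumo system \eqref{eq:f.n.eq.1}--\eqref{eq:f.n.eq.2}, so the work reduces to correctly identifying the matrices $M$, $D$, $Q$ and then evaluating the sign condition. First, I would rewrite the system in the unified form $Mw_t = Dw_{xx} + Q\nabla V(w)$ with $w=(u,v)^\top$. Comparing coefficients with the potential
\[
V(u,v) = \tfrac{1}{2}\gamma v^2 + \tfrac{1}{4}v^4 - uv - \tfrac{1}{4}u^4 + \tfrac{1}{3}(1+\beta)u^3 - \tfrac{1}{2}\beta u^2,
\]
a short calculation gives $\nabla V=(-v+f(u)-\text{sign corrections},\, \gamma v+v^3-u)$, which matches \eqref{eq:f.n.eq.1}--\eqref{eq:f.n.eq.2} precisely when
\[
M=\diag(1,\tau),\qquad D=\diag(d,1),\qquad Q=\diag(1,-1).
\]
This confirms the skew-gradient structure with a single activator $u$ and a single inhibitor $v$, so the hypotheses of Theorem \ref{thm:mainly result} apply.

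Next, I would compute the quadratic form appearing in case (2). Since $QM=\diag(1,-\tau)$, one immediately finds
\[
\inpro{QM w_0'(x)}{w_0'(x)} = |u'(x)|^2 - \tau\, |v'(x)|^2.
\]
Integrating over $\R$ and invoking the assumption that $v\not\equiv 0$ (otherwise \eqref{eq:f.n.eq.2} forces $u\equiv 0$, contradicting the non-constancy of the pulse), the denominator $\int_{-\infty}^{\infty} |v'(x)|^2\,\d x$ is strictly positive and $\tau_0$ is well-defined. The sign condition
\[
\int_{-\infty}^{\infty} \inpro{QM w_0'(x)}{w_0'(x)}\,\d x<0
\]
is then equivalent to $\tau>\tau_0$.

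Finally, applying case (2) of Theorem \ref{thm:mainly result} yields spectral instability of $w_0$ under this inequality. Before closing, I would briefly verify that conditions (H1) and (H2) hold for this specific $V$: computing $\nabla^2 V(0)=\bigl(\begin{smallmatrix}-\beta & -1 \\ -1 & \gamma\end{smallmatrix}\bigr)$ and checking $\langle Q\nabla^2V(0)v,v\rangle = -\beta|v_1|^2 - \gamma|v_2|^2 - 2 v_1 v_2 \cdot 0 + \ldots$ shows (H1) is governed by the behavior of $V$ at the origin, while (H2) reduces to positivity of the $(2,2)$-block of $\nabla^2 V(w_0(x))$, namely $\gamma+3v(x)^2>0$, which is automatic. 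The only mild obstacle is this verification of (H1)/(H2) for the specific potential; once it is in place, the theorem follows by direct substitution into Theorem \ref{thm:mainly result}(2).
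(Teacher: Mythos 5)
Your proposal is correct and follows essentially the same route as the paper: identify $M=\diag(1,\tau)$, $D=\diag(d,1)$, $Q=\diag(1,-1)$, compute $\int\inpro{QMw_0'}{w_0'}\d x=\int|u'|^2\d x-\tau\int|v'|^2\d x$, and invoke case (2) of Theorem \ref{thm:mainly result}. Your extra remarks (well-definedness of $\tau_0$ and the verification of (H1)--(H2), which the paper carries out in the preamble of the application section rather than inside the proof) are accurate and harmless additions.
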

\begin{rem}
    Theorem \ref{thm:unstablity condition for example} establishes a sufficient condition for the onset of instability. Specifically, the inequality $\tau > \tau_0$ serves as a rigorous bound beyond which the standing pulse is no longer sustainable. Geometrically, this threshold signifies that the temporal relaxation of the inhibitor is too slow to effectively counteract the spatial autocatalytic growth of the activator. This leads to the emergence of instabilities such as "breathing" oscillations.
\end{rem}

By the spectral analysis introduced in \cite{CH14}, we state the following stability result:
\begin{thm}\label{thm:stablity condition for example}
    Suppose that $w_0=(u,v)^\top$ is a non-degenerate standing pulse of \eqref{eq:f.n.eq.1} and \eqref{eq:f.n.eq.2}. Then $w_0$ is stable if $i(w_0)=0$ and $\tau<\gamma^2$. 
\end{thm}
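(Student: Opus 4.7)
The plan is to establish that no eigenvalue of $\mathcal{L}$ other than the simple eigenvalue $0$ lies in the closed right half-plane. Since $\sigmaess(\mathcal{L}) \subset \mathbb{C}^{-}$ by (H1) and $0$ is a simple eigenvalue by the non-degeneracy hypothesis, this is enough to conclude spectral stability per Definition~\ref{def:stability}. I would split the argument into ruling out real positive eigenvalues and properly complex eigenvalues separately.

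For positive real eigenvalues I would invoke the Maslov/spectral-flow machinery underlying Theorem~\ref{thm:mainly result}. Under (H2), the crossing form along the eigenvalue curve $\lambda \mapsto E^{u}_{\lambda}$ is sign-definite (this is the ``positive crossing'' property emphasised in the introduction), so the Maslov index of this curve equals, not merely bounds, the number of real positive eigenvalues counted with multiplicity. A Maslov-box homotopy then closes this count against the stability index: the contributions at $\lambda = +\infty$ and at $|\tau|=\infty$ vanish thanks to the asymptotic hyperbolicity from (H1), and the remaining side at $\lambda = 0$ is $i(w_0)$ by its definition. Hence $i(w_0) = 0$ forces the nonexistence of positive real eigenvalues. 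This is the content of the ``spectral analysis introduced in [CH14]'' as translated to the present setting.

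For complex eigenvalues, assume $\lambda = \alpha + i\beta$ with $\alpha \geq 0$ and $\beta \neq 0$, and let $\phi = (\phi_{1},\phi_{2})^{\top}$ be an eigenfunction of the weighted problem \eqref{eq:Weighted eigenvalue pro.}. Testing the two scalar equations against $\overline{\phi_{1}}$ and $\overline{\phi_{2}}$, integrating by parts, and writing $C = \int \phi_{1}\overline{\phi_{2}}\,\d x$ gives
\begin{align*}
-d\|\phi_{1}'\|^{2} + \int f'(u)|\phi_{1}|^{2}\,\d x - \bar{C} &= \lambda\,\|\phi_{1}\|^{2}, \\
-\left(\|\phi_{2}'\|^{2} + \int (\gamma + 3v^{2})|\phi_{2}|^{2}\,\d x\right) + C &= \lambda\tau\,\|\phi_{2}\|^{2}.
\end{align*}
Comparing imaginary parts yields $\ima C = \beta\|\phi_{1}\|^{2} = \beta\tau\|\phi_{2}\|^{2}$, so $\beta \neq 0$ forces the mass identity $\|\phi_{1}\|^{2} = \tau\|\phi_{2}\|^{2}$. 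Rewriting the second equation as $C = B + \lambda\tau\|\phi_{2}\|^{2}$ with $B := \|\phi_{2}'\|^{2} + \int (\gamma + 3v^{2})|\phi_{2}|^{2}\,\d x \geq \gamma \|\phi_{2}\|^{2}$ and taking real parts gives
\[
\Real C \;=\; B + \alpha\tau\|\phi_{2}\|^{2} \;\geq\; \gamma\|\phi_{2}\|^{2}.
\]
On the other hand, Cauchy-Schwarz combined with the mass identity yields $|C| \leq \|\phi_{1}\|\|\phi_{2}\| = \sqrt{\tau}\,\|\phi_{2}\|^{2}$, so
\[
\gamma\|\phi_{2}\|^{2} \;\leq\; \Real C \;\leq\; |C| \;\leq\; \sqrt{\tau}\,\|\phi_{2}\|^{2}.
\]
Assuming $\phi_{2} \not\equiv 0$, this forces $\tau \geq \gamma^{2}$, contradicting the hypothesis. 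The degenerate case $\phi_{2} \equiv 0$ implies $\phi_{1} \equiv 0$ through the mass identity, contradicting that $\phi$ is an eigenfunction.

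The main technical obstacle is the first step, where the Maslov-index \emph{lower bound} of Theorem~\ref{thm:mainly result} must be promoted to the \emph{equality} $i(w_0) = \#\{\text{positive real eigenvalues of }\mathcal{L}\}$. The ingredients are already present in the paper---positive crossing forms under (H2) and asymptotic hyperbolicity under (H1)---but packaging them cleanly as an exact count, including the vanishing of the Maslov contributions at the $\lambda=+\infty$ and $|\tau|=\infty$ sides of the homotopy box, requires care. The complex-eigenvalue step, by contrast, is self-contained and is precisely where the hypothesis $\tau < \gamma^{2}$ is used; it relies only on the uniform lower bound $-\partial_{x}^{2} + \gamma + 3v^{2} \geq \gamma$ for the inhibitor Schr\"odinger operator together with Cauchy-Schwarz.
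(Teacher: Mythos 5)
Your overall strategy coincides with the paper's: the paper rules out non-real spectrum in $\overline{\C}^{+}$ by invoking Lemma~\ref{lem:all real eigenvalue} (quoted from \cite{CH14}) and converts $i(w_0)=0$ into the absence of positive real eigenvalues via the equality $i(w_0)=N_{+}(\mathcal{L})$ of Lemma~\ref{lem:stabilty=unstable eigenvalue}, after checking that $-G_2>0$ and $I>G_3(-G_2)^{-2}G_3^{*}$ reduce to $\tau<\gamma^{2}$. Your complex-eigenvalue computation is correct and is in effect a self-contained verification of Lemma~\ref{lem:all real eigenvalue} for this system; the chain $\gamma\|\phi_2\|^2\le \Real C\le |C|\le\sqrt{\tau}\,\|\phi_2\|^2$ is exactly the estimate hiding behind $I>G_3(-G_2)^{-2}G_3^{*}$.

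The first step, however, contains a genuine gap. You claim that (H2) makes the crossing form of the family $\lambda\mapsto\mathcal{F}_\lambda$ sign-definite, so that the spectral flow \emph{equals} the number of positive real eigenvalues. That is not what (H2) controls: (H2) yields positivity of the \emph{spatial} crossing form $\Gamma(E^u(\tau),\Lambda_R;\tau_0)=|D^{1/2}u|^2+\langle B(\tau_0)v,v\rangle$, which is what identifies $i(w_0)$ with $\iCLM(\Lambda_R,E^u(\tau),\tau\in(-\infty,T])$ and hence with $\spfl{\mathcal{F}_\lambda,\lambda\in[0,\hat\lambda]}$ (Proposition~\ref{pro:spectral flow}). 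The crossing form in the $\lambda$-direction is, up to sign, $\|M^{1/2}Q^{+}\phi\|^2-\|M^{1/2}Q^{-}\phi\|^2=\|\phi_1\|^2-\tau\|\phi_2\|^2$, and $QM$ is indefinite, so nothing in (H1)--(H2) fixes its sign; this is precisely why Theorem~\ref{thm:mainly result} only produces a \emph{lower} bound on $N_{+}(\mathcal{L})$, and it is also the mechanism exploited in part (2) of that theorem, where $\int\langle QMw_0',w_0'\rangle\,\d x<0$ forces a negatively signed crossing. Promoting the bound to an equality is exactly the content of Lemma~\ref{lem:stabilty=unstable eigenvalue}, whose proof shows $\|M^{1/2}\phi_-\|^2=\|(\lambda I-G_2)^{-1}G_3^{*}M^{1/2}\phi_+\|^2<\|M^{1/2}\phi_+\|^2$ using $I>G_3(\lambda I-G_2)^{-2}G_3^{*}$ --- i.e.\ it uses $\tau<\gamma^{2}$ again, not (H2). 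The repair is available with your own tools: for a real crossing $\lambda_0\ge 0$ your second identity gives $C\ge\gamma\|\phi_2\|^2$ while Cauchy--Schwarz gives $C\le\|\phi_1\|\|\phi_2\|$, whence $\|\phi_1\|^2\ge\gamma^{2}\|\phi_2\|^2>\tau\|\phi_2\|^2$ and the crossing form is positive. As written, though, the step fails, because $i(w_0)=\spfl{\mathcal{F}_\lambda,\lambda\in[0,\hat\lambda]}=0$ alone does not exclude positive real eigenvalues whose crossings cancel in sign.
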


The remainder of this paper is organized as follows. In Section 2, we first establish a spectral flow formula in Proposition \ref{pro:spectral flow}, which serves as the foundation for deriving the instability criterion for \eqref{eq:r.d.eq} presented in Theorem \ref{thm:mainly result}. Subsequently, we apply the results of Theorem \ref{thm:mainly result} to conduct a detailed stability and instability analysis for the FitzHugh-Nagumo type system \eqref{eq:f.n.eq.1}--\eqref{eq:f.n.eq.2}. For the convenience of the reader, Section 3 provides a brief overview of the Maslov index, spectral flow, and their relevant  properties used throughout our analysis.

\section{Spectral flow formula}\label{section 2}
In this section, we establish the following spectral flow formula, which plays a crucial role in analyzing the stability of the standing pulse in \eqref{eq:r.d.eq}.

\begin{prop}\label{pro:spectral flow}
    Under conditions (H1) and (H2), there exists a constant $T_\infty > 0$ such that 
    \begin{align}
        \iCLM(\Lambda_R, E^u(\tau), \tau \in (-\infty, T]) = \spfl{\mathcal{F}_\lambda, \lambda \in [0, \hat\lambda]}
    \end{align}
    for all $T \geq T_\infty$, where $\hat\lambda$ is defined as in Corollary \ref{cor:nondegenerate operator}. Consequently, the stability index satisfies:
    \[
    i(w_0) = \spfl{\mathcal{F}_\lambda, \lambda \in [0, \hat\lambda]}.
    \]
\end{prop}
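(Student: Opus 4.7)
The plan is to prove the formula via a rectangle homotopy argument in the Lagrangian Grassmannian. Consider the two-parameter continuous family of Lagrangian subspaces $(\lambda,\tau) \mapsto E^u_\lambda(\tau)$ on the compact rectangle $[0,\hat\lambda] \times [-T',T]$, together with the fixed reference Lagrangian $\Lambda_R$. By the homotopy invariance of the CLM Maslov index, the signed sum of the four oriented boundary contributions vanishes, so solving for the bottom edge (representing the $\lambda=0$ stability side) gives
\[
\iCLM(\Lambda_R, E^u_0(\tau), \tau \in [-T',T]) = \pm\iCLM(\Lambda_R, E^u_\lambda(T), \lambda \in [0,\hat\lambda]) + R_{\mathrm{left}} + R_{\mathrm{top}},
\]
where $R_{\mathrm{left}}$ and $R_{\mathrm{top}}$ collect the Maslov contributions along $\tau = -T'$ and $\lambda = \hat\lambda$ respectively.

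The next step is to kill the two correction terms. For $R_{\mathrm{left}}$, I would use that as $\tau \to -\infty$ the subspace $E^u_\lambda(\tau)$ converges to the unstable invariant subspace of the asymptotic matrix $JA_\lambda(\infty)$; a direct calculation using the block form of $A_\lambda(\infty)$ together with the structure $\Lambda_R = V^+(Q) \oplus V^-(Q)$ shows, under (H1), that this asymptotic subspace is transverse to $\Lambda_R$ uniformly in $\lambda \in [0,\hat\lambda]$. Hence, choosing $T' \geq T_\infty$ sufficiently large keeps the entire path $\lambda \mapsto E^u_\lambda(-T')$ transverse to $\Lambda_R$, yielding $R_{\mathrm{left}}=0$. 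For $R_{\mathrm{top}}$, I would invoke Corollary \ref{cor:nondegenerate operator}: the value $\hat\lambda$ is chosen so that $\mathcal{F}_{\hat\lambda}$ has trivial kernel and, combined with the strict positivity of the crossing form along the $\tau$-direction supplied by (H2), this forces $E^u_{\hat\lambda}(\tau) \cap \Lambda_R = \{0\}$ for all $\tau$, so $R_{\mathrm{top}}=0$ as well.

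The conceptual heart of the argument, and the step I expect to be most delicate, is identifying the right-edge index with the spectral flow:
\[
\iCLM(\Lambda_R, E^u_\lambda(T), \lambda \in [0,\hat\lambda]) = \spfl{\mathcal{F}_\lambda, \lambda \in [0,\hat\lambda]}
\]
for $T$ large. The strategy is to combine the classical identification of $\spfl{\mathcal{F}_\lambda, \lambda \in [0,\hat\lambda]}$ with the Maslov index of the pair $\lambda \mapsto (E^u_\lambda(\tau_0), E^s_\lambda(\tau_0))$ at any fixed reference point $\tau_0$, applied at $\tau_0 = T$, with the observation that for $T$ large $E^s_\lambda(T)$ lies in a neighborhood of the asymptotic stable subspace of $JA_\lambda(\infty)$; under (H1) this asymptotic $\lambda$-family can be continuously deformed to the constant path $\Lambda_R$ through Lagrangians transverse to $E^u_\lambda(T)$, and such a deformation preserves the Maslov index by homotopy invariance.

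Finally, hypothesis (H2) ensures that every crossing of $E^u_0(\tau)$ with $\Lambda_R$ has a positive definite crossing form, so the CLM index coincides with the unsigned dimensional sum defining $i(w_0)$, and all crossings lie in a bounded interval independent of $T \geq T_\infty$. Passing to the limit $T,T' \to +\infty$ in the rectangle identity then yields $\iCLM(\Lambda_R, E^u(\tau), \tau \in (-\infty,T]) = \spfl{\mathcal{F}_\lambda, \lambda \in [0,\hat\lambda]}$ for all $T \geq T_\infty$, and hence $i(w_0) = \spfl{\mathcal{F}_\lambda, \lambda \in [0,\hat\lambda]}$.
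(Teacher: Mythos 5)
Your rectangle in the $(\lambda,\tau)$-plane with the fixed reference $\Lambda_R$ is not the paper's construction, and as written it has a genuine gap at exactly the point the paper identifies as the main obstruction: the translational invariance of the pulse. Since $z=(QDw_0'',w_0')^\top\in\ker\mathcal{F}_0$, one has $E^u_0(0)\cap E^s_0(0)\neq\{0\}$, so $E^u_0(0)$ is \emph{not} complementary to $E^s_0(0)$ and Lemma \ref{lem:space convergence}(ii) does not give $\lim_{T\to+\infty}E^u_0(T)=V^+(JA_0(\infty))$. Consequently you have no control on $E^u_0(T)$ for large $T$, and this undermines three separate steps of your argument: (i) the claim that all crossings of $\tau\mapsto E^u_0(\tau)$ with $\Lambda_R$ lie in a bounded interval (positivity of the crossing form gives monotonicity, not boundedness — stabilization requires $\Lambda_R\cap E^u_0(T)=\{0\}$ for large $T$, which is part of what must be proved); (ii) the right-edge identification, where your deformation of the reference family to the constant path $\Lambda_R$ must preserve the intersection dimension with $E^u_\lambda(T)$ at the endpoint $\lambda=0$, yet $E^s_0(T)\cap E^u_0(T)$ is one-dimensional while $\Lambda_R\cap E^u_0(T)$ is uncontrolled, so the stratum-homotopy hypothesis fails there; and (iii) the assertion $R_{\mathrm{top}}=0$, which is a non sequitur: nondegeneracy of $\mathcal{F}_{\hat\lambda}$ (Corollary \ref{cor:nondegenerate operator}) rules out intersections of $E^u_{\hat\lambda}(\tau)$ with $E^s_{\hat\lambda}(T)$, not with $\Lambda_R$, and indeed nothing forces $\iCLM(\Lambda_R,E^u_{\hat\lambda}(\tau),\tau\in(-\infty,T])$ to vanish — if it did, your bookkeeping would force the bottom edge to equal $\pm$ the right edge and the identity could not close consistently.

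The paper's proof exists precisely to repair this: it regularizes with $B_\epsilon(x)=B(x)-\epsilon I$, shows $\spfl{\mathcal{F}_{\lambda,\epsilon},\lambda\in[0,\hat\lambda]}$ is unchanged for small $\epsilon$ (Lemma \ref{lem:sf not change}), exploits that for $\epsilon>0$ the kernel of $\mathcal{F}_\epsilon$ is trivial so that $\lim_{\tau\to+\infty}E^u_\epsilon(\tau)=V^+(JA_\epsilon(\infty))$ is transverse to $\Lambda_R$ and the H\"ormander-index comparison applies (Lemma \ref{lem:postive maslov = sf}), and then recovers the $\epsilon=0$ statement by a two-sided squeeze using homotopies in the $(\tau,\epsilon)$-rectangle with crossing forms of a definite sign in the $\epsilon$-direction (Lemmas \ref{lem:maslov>=sf} and \ref{lem:maslov<=sf}). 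Your proposal contains no analogue of this regularization, and without it the large-$T$ asymptotics of $E^u_0(T)$ that your homotopy needs are simply not available. The parts of your argument that do survive are the vanishing of the left edge (via Lemma \ref{lem:transversal to LR}) and the final identification of the CLM index with the unsigned count $i(w_0)$ via the positivity of the $\tau$-crossing form under (H2); these coincide with the paper's steps.
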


\begin{rem}
    A direct computation shows that $z(x) = (\psi(x), \phi(x))^\top$ is a solution to the Hamiltonian system \eqref{eq:h.s.with two parameters} if and only if $\psi(x) = QD\phi'(x)$ and $(\phi(x), \lambda)$ satisfies the weighted eigenvalue problem \eqref{eq:Weighted eigenvalue pro.}. This implies that in this context, $\lambda$ is an eigenvalue of the linearized operator $\mathcal{L}$. 
    By Corollary \ref{cor:nondegenerate operator}, for all $\lambda > \hat\lambda$, the operator $\mathcal{F}_\lambda$ is non-degenerate (i.e., $\ker \mathcal{F}_\lambda = \{0\}$). Consequently, the non-negative real eigenvalues of $\mathcal{L}$ are contained within the interval $[0, \hat{\lambda}]$. Therefore, the spectral flow $\spfl{\mathcal{F}_\lambda, \lambda \in [0, \hat\lambda]}$ captures the complete spectral information necessary to analyze the stability of the standing pulse.
\end{rem}
The proof of Proposition \ref{pro:spectral flow} is established through a sequence of technical lemmas, culminating in the final derivation following the proof of Lemma \ref{lem:sf<=0}. 

In \cite{CH14}, the authors proved that under conditions (H1) and (H2), there exists $T_\infty$ such that the following spectral flow formula holds for all $T \geq T_\infty$:
\begin{align}\label{eq:sl formula for CH}
    \iCLM(E^s(T_\infty), E^u(\tau), \tau \in (-\infty, T]) = \spfl{\mathcal{F}_\lambda, \lambda \in [0, \hat\lambda]}.
\end{align}
Our formula improves upon \eqref{eq:sl formula for CH} in that it does not require considering the direction of intersection when computing the Maslov index. Analogous to the role of the Sturmian oscillation theorem in analyzing standing pulses of scalar reaction-diffusion equations, our formula provides a geometric characterization that links the stability to the pulse's shape.

A sufficient approach to proving Proposition \ref{pro:spectral flow} would be to utilize the Hörmander index to estimate the difference between $\iCLM(\Lambda_R, E^u(\tau), \tau \in (-\infty, T_\infty])$ and $\iCLM(E^s(T_\infty), E^u(\tau), \tau \in (-\infty, T_\infty])$. However, due to the translational invariance of the standing pulse, so $E^s(0)\cap E^u(0)\neq \{0\}$, then  the explicit expression for the limit $\lim_{\tau \to \infty} E^u(\tau)$ is generally unavailable, which makes estimating this difference challenging. Nevertheless, according to Lemma \ref{lem:space convergence}, in cases where \eqref{eq:Weighted eigenvalue pro.} does not possess a zero eigenvalue (i.e., $E^u(0) \cap E^s(0) = \{0\}$), the explicit expression of $\lim_{\tau \to \infty} E^u(\tau)$ can be obtained, thereby allowing for a straightforward estimation of the difference.
To overcome the difficulties associated with the non-self-adjoint structure in the general case, we introduce the following two-parameter family of Hamiltonian systems:
\begin{align}\label{eq:h.s.with two parameters}
   \begin{cases}
     z'(x) = J A_{\lambda, \epsilon}(x) z(x), \quad x \in \mathbb{R} \\
     \lim\limits_{|x| \to \infty} z(x) = 0,
   \end{cases}
\end{align}
where the coefficient matrix is defined as
\[
A_{\lambda, \epsilon}(x) = \begin{pmatrix}
    (QD)^{-1} & 0 \\
    0 & B_\epsilon(x) - \lambda Q M
\end{pmatrix}.
\]
Here, $B_\epsilon(x) \defeq B(x) - \epsilon I$, with the parameter $\epsilon$ varying in the interval $[0, C_3]$ and $C_3 = \frac{1}{2}\min\{C_1, C_2\}$.

Let $\mathcal{F}_{\lambda, \epsilon} \defeq -J \frac{\d}{\d x} - A_{\lambda, \epsilon}(x)$ denote the Hamiltonian differential operator associated with the system \eqref{eq:h.s.with two parameters}. To maintain conciseness, we adopt the following notational conventions for cases where one parameter is set to zero:
\begin{itemize}
    \item When $\epsilon = 0$, we denote $B_0$ and $A_{\lambda, 0}$ by $B$ and $A_\lambda$, respectively, and write $\mathcal{F}_\lambda$ for $\mathcal{F}_{\lambda, 0}$.
    \item When $\lambda = 0$, we denote $\mathcal{F}_{0, \epsilon}$ by $\mathcal{F}_\epsilon$.
\end{itemize}

In the case where $\epsilon = 0$, the two-parameter Hamiltonian system \eqref{eq:h.s.with two parameters} reduces to the linear Hamiltonian system \eqref{eq:linear hamiltonian system}. This system is equivalent to the weighted eigenvalue problem \eqref{eq:Weighted eigenvalue pro.}, providing a direct link between the geometric intersection properties of the Lagrangian subspaces and the spectral stability of the standing pulse.

From Remark \ref{rem:C_2 and C_3}, it is evident that 
\begin{align}\label{eq:B epsilon bound}
    \begin{split}
        & \inpro{QB_\epsilon(\infty)v}{v} < 0, \quad \text{for all } v \in \mathbb{R}^n \setminus \{0\}, \\
        & \inpro{B_\epsilon(x)v}{v} > 0, \quad \text{for all } v \in V^-(Q) \setminus \{0\}.
    \end{split}
\end{align}
From now on, we restrict our attention to the case where $\epsilon \in [0, C_3]$ and $\lambda \geq 0$.

The roles of the two parameters involved in our analysis are distinct yet complementary. The parameter $\epsilon$ introduces a perturbation to the system \eqref{eq:linear h.s.eq.} to break the degeneracy arising from the translational invariance of the standing pulse. Specifically, this perturbation restores hyperbolicity at infinity, thereby ensuring the existence of a well-defined limit for the unstable subspace $E^u_\epsilon(\tau)$ as $\tau \to +\infty$. This asymptotic convergence is a critical prerequisite for computing the Hörmander index. Conversely, the parameter $\lambda$ serves as the spectral parameter, providing the necessary information to resolve the weighted eigenvalue problem \eqref{eq:Weighted eigenvalue pro.}.

The proof of Proposition \ref{pro:spectral flow} is organized into the following four-step strategy:

\begin{enumerate}
    \item \textbf{Stability of Spectral Flow:} We demonstrate that for sufficiently small $\epsilon > 0$, the spectral flow $\spfl{\mathcal{F}_{\lambda,\epsilon}, \lambda \in [0, \hat{\lambda}]}$ remains invariant and is equivalent to the spectral flow $\spfl{\mathcal{F}_{\lambda}, \lambda \in [0, \hat{\lambda}]}$ of the unperturbed operator (cf. Lemma \ref{lem:sf not change}).
    
    \item \textbf{Asymptotic Behavior and Hörmander Index:} For $\epsilon > 0$ sufficiently small, the transversality condition $E_\epsilon^s(0) \cap E_\epsilon^u(0) = \{0\}$ is satisfied. Utilizing the fact that $\lim_{\tau \to \infty} E^u_\epsilon(\tau) = V^+(JA_\epsilon(\infty))$, we apply the properties of the Hörmander index to establish the existence of $\hat{T}_\epsilon > 0$ such that:
    \[
    \iCLM(\Lambda_R, E^u_\epsilon(\tau), \tau \in (-\infty, T]) = \spfl{\mathcal{F}_{\lambda, \epsilon}, \lambda \in [0, \hat{\lambda}]}
    \]
    for all $T \geq \hat{T}_\epsilon$ (cf. Lemma \ref{lem:postive maslov = sf}).

    \item \textbf{Homotopy Invariance and Lower Bound:} We construct a homotopic Lagrangian path $(\Lambda_R, E^u_\epsilon(\tau))$ for $(\tau, \epsilon) \in (-\infty, T] \times [0, \hat{\epsilon}]$. By the homotopy invariance of the Maslov index and observing that $\iCLM(\Lambda_R, E^u_\epsilon(T), \epsilon \in [0, \hat{\epsilon}]) \leq 0$ (see Figure \ref{fig:homotopy 1}), we prove that there exists $T_0 > 0$ such that:
    \[
    \iCLM(\Lambda_R, E^u(\tau), \tau \in (-\infty, T]) \geq \spfl{\mathcal{F}_{\lambda}, \lambda \in [0, \hat{\lambda}]}
    \]
    for all $T \geq T_0$ (cf. Lemma \ref{lem:maslov>=sf}).

    \item \textbf{Monotonicity and Upper Bound:} We consider the homotopic Lagrangian paths $(\Lambda_R, E^u_\epsilon(\tau))$ and $(E^s_\epsilon(T), E^u_\epsilon(\tau))$ for $(\tau, \epsilon) \in (-\infty, T] \times [0, \tilde{\epsilon}]$. A key observation is that the difference 
    \[
     \Delta(T) \defeq \iCLM(\Lambda_R, E^u_{\tilde{\epsilon}}(\tau), \tau \in (-\infty, T]) - \iCLM(E^s_{\tilde{\epsilon}}(T), E^u_{\tilde{\epsilon}}(\tau), \tau \in (-\infty, T])
    \]
    is non-decreasing in $T$ and vanishes for sufficiently large $T$. Consequently, there exists $T_1 > 0$ such that:
    \[
     \iCLM(\Lambda_R, E^u(\tau), \tau \in (-\infty, T]) \leq \spfl{\mathcal{F}_{\lambda}, \lambda \in [0, \hat{\lambda}]}
    \]
    for all $T \geq T_1$ (cf. Lemma \ref{lem:maslov<=sf}).
\end{enumerate}

By defining $T_\infty \defeq \max\{T_0, T_1\}$, the convergence of these bounds for all $T \geq T_\infty$ completes the proof of Proposition \ref{pro:spectral flow}.

\begin{figure}[h]
    \centering
    \begin{tikzpicture}[scale=1.1]
        \draw[->, thick](0,0)--(0,4.5) node[above] {$\epsilon$};
        \draw[->, thick](0,0)--(7.5,0) node[right]{$\tau$};
        
        \draw[blue, thick] (0,0) rectangle (6,3);
        
        \node[left, font=\footnotesize, text width=3.5cm, align=right] at (-0.2,1.5) {$\iCLM(\Lambda_R, E_\epsilon(-\infty); \epsilon \in [0, \hat{\epsilon}])$};
        \node[below, font=\footnotesize] at (3,-0.2) {$\iCLM(\Lambda_R, E^u(\tau); \tau \in (-\infty, T])$};
        \node[right, font=\footnotesize, text width=3cm, align=left] at (6.2,1.5) {$\iCLM(\Lambda_R, E_\epsilon^u(T); \epsilon \in [0, \hat{\epsilon}])$};
        \node[above, font=\footnotesize] at (3,3.2) {$\iCLM(\Lambda_R, E_{\hat{\epsilon}}^u(\tau); \tau \in (-\infty, T])$};
        
        \draw[->, gray, thin] (3,1.8) arc[start angle=90, end angle=-210, radius=0.4];
        \node[gray, font=\scriptsize] at (3,0.5) {Homotopy};
        
        \node[below left] at (0,0) {$0$};
        \node[left] at (0,3) {$\hat{\epsilon}$};
        \node[below] at (6,0) {$T$};
    \end{tikzpicture}
    \caption{The rectangular homotopy path in the $(\tau, \epsilon)$-plane used to establish the equivalence between the unperturbed Maslov index and the perturbed spectral flow.}
    \label{fig:homotopy 1}
\end{figure}

A direct computation shows that $z(x) = \begin{pmatrix} \psi(x) \\ \phi(x) \end{pmatrix}$ is a solution of \eqref{eq:h.s.with two parameters} if and only if $\psi(x) = QD\phi'(x)$, which in turn is equivalent to $\phi(x)$ being a solution of the following equation:
\begin{align}\label{eq:2nd equation with two parameters}
    \begin{cases}
         D \phi''(x) + QB_\epsilon(x) \phi(x) = \lambda M \phi(x), \\
         \lim\limits_{|x| \to \infty} \phi(x) = 0.
    \end{cases}
\end{align}
We note that the limit $A_{\lambda, \epsilon}(\infty) \defeq \lim\limits_{|x| \to \infty} A_{\lambda, \epsilon}(x)$ is well-defined.

In order to use the spectral flow to relate the eigenvalues of the operator $\mathcal{L}$, we must ensure that $\mathcal{F}_{\lambda,\epsilon}$ is a Fredholm operator. This is equivalent to verifying that the matrix $JA_{\lambda,\epsilon}(\infty)$ is hyperbolic. The following lemma provides this essential information.

\begin{lem}\label{lem:matrix hyperbolic}
    Under condition (H1), if \( \lambda \geq 0 \) and $\epsilon \in [0, C_3]$, then the spectrum of \( JA_{\lambda,\epsilon}(\infty) \) satisfies \( \sigma(JA_{\lambda,\epsilon}(\infty)) \cap i\mathbb{R} = \emptyset \).
\end{lem}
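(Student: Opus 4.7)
The plan is a contradiction argument that leverages (H1) to rule out purely imaginary eigenvalues of $JA_{\lambda,\epsilon}(\infty)$. Suppose $\mu = i\omega$ with $\omega \in \R$ were such an eigenvalue, with eigenvector $z = (p,q)^\top \in \C^{2n}\setminus\{0\}$. First I would unpack the block-structured eigenvalue equation, using the standard symplectic $J$ and the block-diagonal form of $A_{\lambda,\epsilon}(\infty)$, to obtain $p = i\omega QD q$ from one block and $(B_\epsilon(\infty) - \lambda QM)q = \omega^2 QD q$ from the other. Multiplying the second identity on the left by $Q$ and using $Q^2 = I$ collapses the problem to the single complex-linear equation
\[
\bigl(QB_\epsilon(\infty) - \lambda M - \omega^2 D\bigr) q = 0,
\]
with $q \neq 0$ (the alternative $q=0$ would force $p=0$ via the relation $p = i\omega QDq$, contradicting $z\neq 0$; the case $\mu = 0$ is included by reading the same identity at $\omega = 0$).

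Next I pair this identity with $q$ in the Hermitian inner product and take real parts. Because $M$ and $D$ are real positive diagonal matrices, the quantities $\lambda\inpro{Mq}{q}$ and $\omega^2 \inpro{Dq}{q}$ are real and nonnegative for $\lambda \ge 0$, so the identity yields
\[
\Real\inpro{QB_\epsilon(\infty)q}{q} = \lambda\inpro{Mq}{q} + \omega^2\inpro{Dq}{q} \ge 0.
\]
Because the real matrix $QB_\epsilon(\infty)$ is not symmetric in general, its Hermitian quadratic form is a priori complex; however, writing $q = a + ib$ with $a,b \in \R^n$ gives
\[
\Real\inpro{QB_\epsilon(\infty)q}{q} = \inpro{QB_\epsilon(\infty)a}{a} + \inpro{QB_\epsilon(\infty)b}{b},
\]
reducing the required estimate to a real-variable bilinear form to which (H1) applies directly.

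Finally I close the argument with a perturbation estimate. The constant $C_2 > 0$ furnished by Remark \ref{rem:C_2 and C_3} gives $\inpro{QB(\infty)v}{v} \le -C_2 |v|^2$ for every $v \in \R^n$, and since $B_\epsilon = B - \epsilon I$,
\[
\inpro{QB_\epsilon(\infty)v}{v} = \inpro{QB(\infty)v}{v} - \epsilon\inpro{Qv}{v} \le -(C_2 - \epsilon)|v|^2,
\]
using $|\inpro{Qv}{v}| \le |v|^2$ because $Q$ has eigenvalues $\pm 1$. The hypothesis $\epsilon \in [0,C_3]$ with $C_3 \le C_2/2$ then yields $\Real\inpro{QB_\epsilon(\infty)q}{q} \le -\tfrac{C_2}{2}|q|^2 < 0$ whenever $q \neq 0$, contradicting the previous nonnegativity. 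Hence no such $\mu$ exists and the spectrum of $JA_{\lambda,\epsilon}(\infty)$ avoids $i\R$.

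The only real friction I expect is bookkeeping between complex and real linear algebra: the non-symmetry of $QB_\epsilon(\infty)$ means one must take the real part \emph{before} invoking the real quadratic-form bound from (H1), and one must verify that the decomposition $q=a+ib$ genuinely reduces the estimate to the setting of (H1). Once that reduction is made, the remainder is a direct sign comparison with no analytic subtleties.
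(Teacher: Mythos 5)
Your proposal is correct and follows essentially the same route as the paper: assume a purely imaginary eigenvalue, eliminate the first block component to reduce to $(QB_\epsilon(\infty)-\lambda M-\omega^2 D)q=0$, pair with $q$, take real parts via $q=a+ib$, and derive a sign contradiction from (H1) (the paper invokes the prepackaged bound \eqref{eq:B epsilon bound} where you rederive $\langle QB_\epsilon(\infty)v,v\rangle\le-(C_2-\epsilon)|v|^2$ explicitly). Your explicit verification that $q\neq 0$ is a small but welcome addition that the paper leaves implicit.
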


\begin{proof}
    We proceed by contradiction. Suppose that $ia$ ($a \in \mathbb{R}$) is a purely imaginary eigenvalue of $JA_{\lambda,\epsilon}(\infty)$ with a corresponding eigenvector $\begin{pmatrix} u \\ v \end{pmatrix} \in \mathbb{C}^{2n} \setminus \{0\}$. Then,
    \begin{align}\label{eq:matrix hyperbolic1}
        \begin{pmatrix}
            0 & \lambda Q M - B_\epsilon(\infty) \\
            QD^{-1} & 0
        \end{pmatrix}
        \begin{pmatrix}
            u \\ v
        \end{pmatrix}
        = ia \begin{pmatrix}
            u \\ v
        \end{pmatrix}.
    \end{align}
    Equation \eqref{eq:matrix hyperbolic1} is equivalent to the system:
    \begin{align}\label{eq:matrix hyperbolic2}
        \begin{cases}
            \lambda Q M v - B_\epsilon(\infty) v = ia u, \\
            Q D^{-1} u = ia v.
        \end{cases}
    \end{align}
    Substituting the second equation into the first and noting that $Q^2 = I$, we obtain
    \begin{align}\label{eq:matrix hyperbolic3}
        \left( \lambda M - Q B_\epsilon(\infty) + a^2 D \right) v = 0.
    \end{align}
    Taking the inner product of \eqref{eq:matrix hyperbolic3} with $v$ yields
    \begin{align}\label{eq:matrix hyperbolic4}
        \lambda \inpro{Mv}{v} - \inpro{Q B_\epsilon(\infty) v}{v} + a^2 \inpro{Dv}{v} = 0.
    \end{align}
    Let $v = v_1 + i v_2$, where $v_1, v_2 \in \mathbb{R}^n$. Expanding \eqref{eq:matrix hyperbolic4} and considering its real part, we have
    \begin{align*}
        0 = & \lambda \left( |M^{1/2} v_1|^2 + |M^{1/2} v_2|^2 \right) + a^2 \left( |D^{1/2} v_1|^2 + |D^{1/2} v_2|^2 \right) \\
            & - \inpro{Q B_\epsilon(\infty) v_1}{v_1} - \inpro{Q B_\epsilon(\infty) v_2}{v_2}.
    \end{align*}
    Given $\lambda \ge 0$, $\epsilon \in [0, C_3]$, $a \in \mathbb{R}$, and the condition \eqref{eq:B epsilon bound}, the terms involving $M^{1/2}$ and $D^{1/2}$ are non-negative. Furthermore, the terms $-\inpro{Q B_\epsilon(\infty) v_j}{v_j}$ are strictly positive for any $v_j \neq 0$. Since $v \neq 0$ implies that at least one of $v_1, v_2$ is non-zero, the right-hand side is strictly positive, leading to the contradiction $0 > 0$.
\end{proof}

This follows from \cite[Proposition 2.3]{HPWX20} and  the fact that the asymptotic matrix $JA_{\lambda, \epsilon}(\infty)$ is hyperbolic, as established in Lemma \ref{lem:matrix hyperbolic}.
\begin{cor}\label{cor:fredholm operator}
    Under condition (H1), the operator $\mathcal{F}_{\lambda, \epsilon}$ is a Fredholm operator for all $\lambda \geq 0$ and $\epsilon \in[0,C_3]$.
\end{cor}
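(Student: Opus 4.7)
The plan is to exploit the standard equivalence between the Fredholm property of first-order Hamiltonian differential operators on the real line and the asymptotic hyperbolicity of the coefficient matrix. Since $\mathcal{F}_{\lambda,\epsilon} = -J\frac{\d}{\d x} - A_{\lambda,\epsilon}(x)$ is a differential operator on $\mathbb{R}$ whose coefficient admits a well-defined limit $A_{\lambda,\epsilon}(\infty)$, general dichotomy theory guarantees that $\mathcal{F}_{\lambda,\epsilon}$, viewed as an unbounded operator from $W^{1,2}(\mathbb{R},\mathbb{R}^{2n})$ to $L^{2}(\mathbb{R},\mathbb{R}^{2n})$, is Fredholm provided the matrix $JA_{\lambda,\epsilon}(\infty)$ admits an exponential dichotomy, or equivalently that its spectrum avoids the imaginary axis.

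This dichotomy condition has already been verified by Lemma \ref{lem:matrix hyperbolic}, which shows $\sigma(JA_{\lambda,\epsilon}(\infty)) \cap i\mathbb{R} = \emptyset$ for every $\lambda \geq 0$ and every $\epsilon \in [0, C_3]$ under assumption (H1). Therefore the proof is essentially a direct citation: I would simply invoke \cite[Proposition 2.3]{HPWX20}, which packages precisely this implication (hyperbolicity at infinity $\Rightarrow$ Fredholm) for the class of Hamiltonian operators under consideration. No further analytic work is needed, since the necessary quadratic form estimate that drives hyperbolicity was already harvested in the proof of Lemma \ref{lem:matrix hyperbolic} from the uniform bound \eqref{eq:B epsilon bound} together with the assumption $\lambda \geq 0$.

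There is no real obstacle in this step; the only mild subtlety is to verify that the hypotheses of the cited proposition apply uniformly in the two parameters $(\lambda, \epsilon)$, but this is automatic because Lemma \ref{lem:matrix hyperbolic} is itself stated uniformly on the parameter range $\lambda \geq 0$, $\epsilon \in [0, C_3]$. Consequently the corollary follows with a one-line proof, and the statement will be used later to legitimately define the spectral flow $\spfl{\mathcal{F}_\lambda, \lambda \in [0, \hat\lambda]}$ appearing in Proposition \ref{pro:spectral flow}.
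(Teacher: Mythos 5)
Your proposal is correct and matches the paper's argument exactly: the paper likewise deduces the corollary from \cite[Proposition 2.3]{HPWX20} combined with the hyperbolicity of $JA_{\lambda,\epsilon}(\infty)$ established in Lemma \ref{lem:matrix hyperbolic}. Nothing further is needed.
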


Let $\Phi_{\lambda, \epsilon}(x, \tau)$ denote the principal fundamental solution matrix of system \eqref{eq:h.s.with two parameters}, satisfying $\Phi_{\lambda, \epsilon}(\tau, \tau) = I_{2n}$. we denote $E_{\lambda, \epsilon}^s(\tau)$ and $E_{\lambda, \epsilon}^u(\tau)$ by the stable space and unstable space of \eqref{eq:h.s.with two parameters}, respectively. 
For simplicity, we write $E^s(\tau) \defeq E_{0, 0}^s(\tau)$ and $E^u(\tau) \defeq E_{0, 0}^u(\tau)$ when $(\lambda, \epsilon) = (0, 0)$. Similarly, when $\lambda = 0$, we denote the subspaces by $E_\epsilon^s(\tau)$ and $E_\epsilon^u(\tau)$, respectively.

Under condition (H1), by Lemma \ref{lem:matrix hyperbolic}, we have that 
\[
\sigma(JA_{\lambda,\epsilon}(\infty))\cap i\R
\] for all $\epsilon\in[0,C_3]$ and $\lambda\geq 0$.
 According to \cite[Lemma 3.1]{HP17}, we have that
\begin{lem}
Under condition (H1), we have that 
the stable space $E_{\lambda,\epsilon}^s(\tau)$ and unstable space $E_{\lambda,\epsilon}^u(\tau)$ are both Lagrangian subspaces for all $\epsilon\in[0,C_3]$ and $\lambda\geq 0$.
\end{lem}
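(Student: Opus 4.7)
The argument rests on the elementary observation that the system \eqref{eq:h.s.with two parameters} is genuinely Hamiltonian. Indeed, the block-diagonal coefficient matrix $A_{\lambda,\epsilon}(x)$ is symmetric for every $x\in\R$: $(QD)^{-1}$ is diagonal, $B_\epsilon(x) = \nabla^2 V(w_0(x)) - \epsilon I$ is symmetric as a Hessian, and $QM$ is diagonal. Consequently the fundamental matrix satisfies $\trasp{\Phi_{\lambda,\epsilon}(x,\tau)}\, J\, \Phi_{\lambda,\epsilon}(x,\tau) = J$ for every $x,\tau\in\R$, i.e.\ it is symplectic.

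The plan is to use this symplectic structure to verify isotropy of both subspaces and then to exploit the hyperbolicity supplied by Lemma \ref{lem:matrix hyperbolic} to pin down their dimensions. For isotropy of $E^s_{\lambda,\epsilon}(\tau)$, I would take $\xi,\eta\in E^s_{\lambda,\epsilon}(\tau)$ and set $z_\xi(x) = \Phi_{\lambda,\epsilon}(x,\tau)\xi$, $z_\eta(x) = \Phi_{\lambda,\epsilon}(x,\tau)\eta$. The pairing $\inpro{J z_\xi(x)}{z_\eta(x)}$ is independent of $x$ by the symplectic identity above, and since both solutions decay as $x\to+\infty$ this pairing must vanish. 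Evaluating at $x=\tau$ gives $\inpro{J\xi}{\eta}=0$, so $E^s_{\lambda,\epsilon}(\tau)$ is isotropic. An identical argument with $x\to-\infty$ handles $E^u_{\lambda,\epsilon}(\tau)$.

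For the dimension count, Lemma \ref{lem:matrix hyperbolic} guarantees that $JA_{\lambda,\epsilon}(\infty)$ has no purely imaginary eigenvalues, hence is hyperbolic. Since $JA_{\lambda,\epsilon}(\infty)$ is itself a Hamiltonian matrix, its spectrum enjoys the symmetry $\mu\leftrightarrow-\mu$, which forces its stable and unstable eigenspaces each to have dimension $n$. The exponential dichotomy for \eqref{eq:h.s.with two parameters}, supplied by the convergence of $A_{\lambda,\epsilon}(x)$ to $A_{\lambda,\epsilon}(\infty)$ as $|x|\to\infty$, then identifies $E^s_{\lambda,\epsilon}(\tau)$ and $E^u_{\lambda,\epsilon}(\tau)$ with the fibers of an $n$-dimensional stable, respectively unstable, bundle at every $\tau$. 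Combined with the isotropy established above, the equality $\dim = n$ upgrades isotropic to Lagrangian.

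The main technical point is the transfer of dimension and exponential decay data from the autonomous asymptotic matrix back to the non-autonomous system on all of $\R$. This is precisely the content of the standard roughness theorem for exponential dichotomies, which is what the authors invoke by citing \cite[Lemma 3.1]{HP17}. Once that dichotomy is in hand, the remaining work is pure conservation-of-symplectic-form bookkeeping, and the proof concludes without further subtleties.
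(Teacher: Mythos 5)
Your proof is correct and is essentially the argument the paper relies on: the paper disposes of this lemma by citing \cite[Lemma 3.1]{HP17} after establishing hyperbolicity of $JA_{\lambda,\epsilon}(\infty)$ in Lemma \ref{lem:matrix hyperbolic}, and your writeup (symplecticity of the flow plus decay gives isotropy; hyperbolicity plus the $\mu\leftrightarrow-\mu$ spectral symmetry of the Hamiltonian matrix plus the exponential dichotomy gives dimension $n$) is exactly the content of that cited lemma, made explicit. No gaps.
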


\begin{lem}\label{lem:transversal to LR}
    Under conditions (H1) and (H2), if $\lambda \geq 0$ and $\epsilon \in [0, \cic{3}]$, then
    \begin{align}
        V^{\pm}(JA_{\lambda, \epsilon}(\infty)) \cap \Lambda_R = \{0\}.
    \end{align}
\end{lem}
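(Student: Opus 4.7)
The plan is to exploit the fact that the constant matrix $A_{\lambda,\epsilon}(\infty)$ is symmetric, which produces a conserved quadratic form for the autonomous Hamiltonian flow at infinity; the sign of this form on $\Lambda_R$ will then force the intersection to be trivial. First I would observe that $A_{\lambda,\epsilon}(\infty)$ is indeed symmetric: the block $(QD)^{-1}=D^{-1}Q$ is diagonal since $Q$ and $D$ are commuting diagonal matrices, while $B_\epsilon(\infty)-\lambda QM$ is symmetric because $B_\epsilon(\infty)$ is symmetric and $QM$ is diagonal. Since $J^\top=-J$, the matrix $A_{\lambda,\epsilon}(\infty)JA_{\lambda,\epsilon}(\infty)$ is antisymmetric, so $E(x):=\inpro{A_{\lambda,\epsilon}(\infty)z(x)}{z(x)}$ is constant along every solution $z(x)=e^{xJA_{\lambda,\epsilon}(\infty)}z_0$ of the constant-coefficient system $z'=JA_{\lambda,\epsilon}(\infty)z$.

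If $z_0\in V^{\pm}(JA_{\lambda,\epsilon}(\infty))$, then $z(x)\to 0$ as $x\to\mp\infty$ by definition of the stable/unstable subspace, so the conserved value must equal zero: $\inpro{A_{\lambda,\epsilon}(\infty)z_0}{z_0}=0$. The heart of the argument is then to show that on $\Lambda_R$ this quadratic form is strictly positive away from the origin, forcing $z_0=0$. Writing $z_0=(p_0,q_0)^\top$ with $p_0\in V^+(Q)$ and $q_0\in V^-(Q)$, so that $Qp_0=p_0$ and $Qq_0=-q_0$, a direct expansion gives
\[
\inpro{A_{\lambda,\epsilon}(\infty)z_0}{z_0}=\inpro{D^{-1}p_0}{p_0}+\inpro{B_\epsilon(\infty)q_0}{q_0}+\lambda\inpro{Mq_0}{q_0}.
\]
The first term is non-negative since $D$ is positive definite, and the third is non-negative since $\lambda\ge 0$ and $M$ is positive definite. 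For the middle term, combining $Q=Q^\top$ with $Qq_0=-q_0$ yields the identity $\inpro{B_\epsilon(\infty)q_0}{q_0}=-\inpro{QB_\epsilon(\infty)q_0}{q_0}$, which by the shifted bound in \eqref{eq:B epsilon bound} is strictly positive unless $q_0=0$.

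Assembling these observations, $\inpro{A_{\lambda,\epsilon}(\infty)z_0}{z_0}$ is a sum of three non-negative contributions, each vanishing only when the corresponding component of $z_0$ vanishes; the vanishing of the total sum therefore forces $p_0=0$ and $q_0=0$, \ie $z_0=0$, which completes the argument. The point requiring the most care is verifying that the shift $B(\infty)\mapsto B(\infty)-\epsilon\Id$ preserves the (H1)-type negativity needed for the middle-term estimate, but this is precisely the content of the standing choice $\epsilon\in[0,C_3]$ with $C_3=\tfrac12\min\{C_1,C_2\}$ as recorded in \eqref{eq:B epsilon bound}; in particular, no finer analysis of the decay rates of $V^\pm$ or of the generalized eigenspace structure of $JA_{\lambda,\epsilon}(\infty)$ is needed, and condition (H2) does not enter the proof of this lemma directly, although it is part of the overarching hypotheses of the broader development.
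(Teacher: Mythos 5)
Your proof is correct and follows essentially the same route as the paper: both arguments reduce the claim to showing that the quadratic form $\xi\mapsto\inpro{A_{\lambda,\epsilon}(\infty)\xi}{\xi}$ vanishes on $V^{\pm}(JA_{\lambda,\epsilon}(\infty))$ yet is positive definite on $\Lambda_R$ via $Qp=p$, $Qq=-q$ and the bound \eqref{eq:B epsilon bound}. The only (cosmetic) difference is that you obtain the vanishing from conservation of $\inpro{A_{\lambda,\epsilon}(\infty)z}{z}$ along the flow together with decay, whereas the paper reads it off from the isotropy of the invariant Lagrangian subspace $V^{\pm}$; your observation that (H2) is not actually needed is also accurate.
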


\begin{proof}
    We present the proof for $V^+(JA_{\lambda, \epsilon}(\infty))$; the case for $V^-(JA_{\lambda, \epsilon}(\infty))$ follows by a completely analogous argument.

    Suppose $\xi \defeq (p, q)^\top \in V^+(JA_{\lambda, \epsilon}(\infty)) \cap \Lambda_R$. Since $V^+(JA_{\lambda, \epsilon}(\infty))$ is the positive eigenspace associated with the Hamiltonian matrix $JA_{\lambda, \epsilon}(\infty)$, and $\Lambda_R$ is a Lagrangian subspace, we evaluate the quadratic form associated with the symplectic form $\omega$. A direct calculation yields:
    \begin{align}
        0 &= \omega\left( JA_{\lambda, \epsilon}(\infty) 
        \begin{pmatrix}
            p \\ q
        \end{pmatrix},
        \begin{pmatrix}
            p \\ q
        \end{pmatrix} \right) 
        = - \inpro{
            \begin{pmatrix}
                (QD)^{-1} & 0 \\
                0 & B_\epsilon(\infty) - \lambda QM
            \end{pmatrix}
            \begin{pmatrix}
                p \\ q
            \end{pmatrix}
        }{
            \begin{pmatrix}
                p \\ q
            \end{pmatrix}
        } \\
        &= - \inpro{QD^{-1} p}{p} - \inpro{(B_\epsilon(\infty) - \lambda QM)q}{q} = - |D^{-\frac{1}{2}} p|^2 - \lambda |M^{\frac{1}{2}} q|^2 + \inpro{Q B_\epsilon(\infty) q}{q} \leq 0.
    \end{align}
    In the final step, we utilized the fact that $Q^2 = I$ and that $D, M$ are positive definite, together with  \eqref{eq:B epsilon bound}. The strict inequality $\inpro{Q B_\epsilon(\infty) q}{q} < 0$ for $q \neq 0$ implies that equality holds if and only if $p = 0$ and $q = 0$. Consequently, the intersection is trivial.
\end{proof}
 
Given $\tau, T \in \R$, we consider the following Hamiltonian boundary value problems:
\begin{align}\label{eq:1st -}
    \begin{cases}
        z'(x) = J A_{\lambda, \epsilon}(x+\tau)z(x), & x \in (-\infty, 0], \\
        z(0) \in E^u_{\lambda, \epsilon}(\tau), \quad \lim_{x \to -\infty} z(x) = 0,
    \end{cases}
\end{align}
and 
\begin{align}\label{eq:1st +}
    \begin{cases}
        z'(x) = J A_{\lambda, \epsilon}(x+T)z(x), & x \in [0, \infty), \\
        z(0) \in E^u_{\lambda, \epsilon}(T), \quad \lim_{x \to \infty} z(x) = 0.
    \end{cases}
\end{align}
For $\lambda \geq 0$ and $\epsilon \in [0, \cic{3}]$, it is straightforward to observe that the existence of a non-trivial solution to systems \eqref{eq:1st -} and \eqref{eq:1st +} is equivalent to the condition that
\[
E^s_{\lambda, \epsilon}(T) \cap E^u_{\lambda, \epsilon}(\tau) \neq \{0\}.
\]

\begin{lem}\label{lem:non-trivial solution}
    Suppose that condition (H1) holds. For a fixed $\epsilon \in [0, \cic{3}]$ and for all $\lambda \geq \frac{C_1 }{l}$, the systems \eqref{eq:1st -} and \eqref{eq:1st +} do not admit any non-trivial solution, , where $l$ is the smallest eigenvalue of $M$
\end{lem}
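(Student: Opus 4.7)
The plan is to argue by contradiction using an $L^2$ energy identity with an exact cancellation of boundary terms at the matching endpoints. Following the observation immediately preceding the lemma, a non-trivial joint solution of \eqref{eq:1st -}--\eqref{eq:1st +} corresponds to a nonzero common datum $z(0)\in E^u_{\lambda,\epsilon}(\tau)\cap E^s_{\lambda,\epsilon}(T)$. After undoing the shifts by $\tau$ and $T$, this yields two solutions $\phi_1$ on $(-\infty,\tau]$ and $\phi_2$ on $[T,\infty)$ of the second-order equation \eqref{eq:2nd equation with two parameters} with $\phi_1(\tau)=\phi_2(T)=\phi(0)$ and $\phi_1'(\tau)=\phi_2'(T)=\phi'(0)$. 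The derivative match follows from the Hamiltonian relation $\psi=QD\phi'$ together with the invertibility of $QD$, while Lemma \ref{lem:matrix hyperbolic} guarantees exponential decay of $\phi_1$ and $\phi_2$ at their respective infinities.

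I then take the $\R^n$-inner product of \eqref{eq:2nd equation with two parameters} with $\phi_i$ on each half-line and integrate by parts. The boundary contributions at $\pm\infty$ vanish by exponential decay, whereas the contributions at $\tau$ (from $\phi_1$) and at $T$ (from $\phi_2$) are exactly $+\inpro{D\phi'(0)}{\phi(0)}$ and $-\inpro{D\phi'(0)}{\phi(0)}$ respectively, so they cancel when the two identities are summed. Writing $\int_R(\cdot)\,\d x := \int_{-\infty}^\tau(\cdot)\,\d x + \int_T^\infty(\cdot)\,\d x$ with the convention $\phi|_{(-\infty,\tau]}=\phi_1$ and $\phi|_{[T,\infty)}=\phi_2$, this yields
\[
\lambda\int_R\inpro{M\phi}{\phi}\,\d x + \int_R |D^{1/2}\phi'|^2\,\d x = \int_R \inpro{QB_\epsilon(x)\phi}{\phi}\,\d x.
\]
Bounding the left-hand side below by $\lambda l\int_R|\phi|^2\,\d x$ (with $l$ the smallest eigenvalue of $M$) and the right-hand side above by $C_1\int_R|\phi|^2\,\d x$, using \eqref{eq:constant C_1} together with the elementary estimate $|\inpro{Qv}{v}|\le|v|^2$ to absorb the $-\epsilon I$ perturbation in $B_\epsilon$, I arrive at $(\lambda l - C_1)\int_R|\phi|^2\,\d x\le 0$. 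For $\lambda\ge C_1/l$ this forces $\phi_1\equiv 0$ on $(-\infty,\tau]$ and $\phi_2\equiv 0$ on $[T,\infty)$; in particular $\phi(0)=\phi'(0)=0$, hence $\psi(0)=QD\phi'(0)=0$, so $z(0)=0$, contradicting the assumed non-triviality.

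The principal obstacle is ensuring that the boundary contributions at the matching points $\tau$ and $T$ cancel \emph{exactly}; this requires carefully tracking that both $\phi(0)$ and $\phi'(0)$ are shared by the two half-problems, which in turn rests on the Hamiltonian relation $\psi=QD\phi'$ and the invertibility of $QD$. A secondary technical point is the vanishing of the boundary terms at $\pm\infty$, which is guaranteed by the exponential decay coming from the hyperbolicity of $JA_{\lambda,\epsilon}(\infty)$ established in Lemma \ref{lem:matrix hyperbolic}.
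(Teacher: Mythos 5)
Your argument is, in substance, the same as the paper's. The paper pairs the first--order system against the twisted test vector $\hat z=(-Q\xi_1,Q\xi_2)^\top$ and integrates by parts; after substituting $\xi_1=QD\xi_2'$ this is exactly your second--order energy identity $\lambda\int_R\inpro{M\phi}{\phi}+\int_R|D^{1/2}\phi'|^2=\int_R\inpro{QB_\epsilon\phi}{\phi}+\text{(boundary)}$, with the same cancellation of the matching--point terms $\pm\inpro{D\phi'(0)}{\phi(0)}$ (your reduction to matched Cauchy data via $\psi=QD\phi'$ and the invertibility of $QD$ is correct) and the same vanishing at $\pm\infty$ by hyperbolic decay. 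Up to that point everything checks out.

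The step that fails as written is the upper bound on the right--hand side. You claim $\int_R\inpro{QB_\epsilon\phi}{\phi}\le C_1\int_R|\phi|^2$, but $\inpro{QB_\epsilon v}{v}=\inpro{QBv}{v}-\epsilon\inpro{Qv}{v}$, and for $v\in V^-(Q)$ one has $-\epsilon\inpro{Qv}{v}=+\epsilon|v|^2$; the estimate $|\inpro{Qv}{v}|\le|v|^2$ therefore only gives $\inpro{QB_\epsilon v}{v}\le(C_1+\epsilon)|v|^2$. The perturbation is not absorbed --- it worsens the constant --- and your final inequality is really $(\lambda l-C_1-\epsilon)\int_R|\phi|^2\le0$, which yields no contradiction in the range $C_1/l\le\lambda<(C_1+\epsilon)/l$ when $\epsilon>0$. (The borderline case $\lambda l=C_1+\epsilon$ is still fine, since then $\int_R|D^{1/2}\phi'|^2\le0$ forces $\phi'\equiv0$ and decay kills $\phi$.) To be fair, the paper's own proof asserts the coefficient $l\lambda+\epsilon-C_1$, i.e.\ it takes the favorable sign $\epsilon\inpro{Q\xi_2}{\xi_2}\ge\epsilon|\xi_2|^2$ for the very same term, which is equally unjustified; the issue is harmless if one enlarges the threshold to $(C_1+C_3)/l$, but as stated your bound, and hence the claim for all $\lambda\ge C_1/l$, has a gap at the lower end of the range. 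Everything else in your write--up is sound.
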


\begin{proof}
    Suppose that \eqref{eq:1st -} and \eqref{eq:1st +} admit solutions $z_1(x) = (\xi_1(x), \xi_2(x))^\top$ and $z_2(x) = (\eta_1(x), \eta_2(x))^\top$, respectively, such that $z_1(0) = z_2(0)$. We define the auxiliary variables $\hat{z}_1(x) \defeq (-Q\xi_1(x), Q\xi_2(x))^\top$ and $\hat{z}_2(x) \defeq (-Q\eta_1(x), Q\eta_2(x))^\top$.
    
    Integrating by parts, we obtain
    \begin{align}\label{eq:non-trival -}
       \begin{split}
         0 &= \int_{-\infty}^{0} \left[ \inpro{-Jz_1'(x)}{\hat{z}_1(x)} - \inpro{A_{\lambda, \epsilon}(x+\tau)z_1(x)}{\hat{z}_1(x)} \right] \d x  \\
        &= -\int_{-\infty}^{0} \left[ \inpro{Q\xi'_1(x)}{\xi_2(x)} + \inpro{Q\xi_1(x)}{\xi'_2(x)} \right] \d x  \\
        &\quad + \int_{-\infty}^{0} \left[ \inpro{D^{-1}\xi_1(x)}{Q\xi_1(x)} + \inpro{(\lambda QM+\epsilon I-B(x+\tau))\xi_2(x)}{Q\xi_2(x)} \right] \d x  \\
        &\geq \inpro{Q\xi_1(0)}{\xi_2(0)} + \int_{-\infty}^{0} \left[ |D^{-\frac{1}{2}}\xi_1(x)|^2 + (l\lambda+\epsilon - C_1 )|\xi_2(x)|^2 \right] \d x.
       \end{split}
    \end{align}
    Similarly, for system \eqref{eq:1st +}, it holds that
    \begin{align}\label{eq:non-trival +}
        0 &\geq -\inpro{Q\eta_1(0)}{\eta_2(0)} + \int_{0}^{+\infty} \left[ |D^{-\frac{1}{2}}\eta_1(x)|^2 + (l\lambda +\epsilon- C_1  )|\eta_2(x)|^2 \right] \d x.
    \end{align}
    Adding \eqref{eq:non-trival -} and \eqref{eq:non-trival +}, and utilizing the fact that the boundary terms cancel (since $z_1(0) = z_2(0)$), we obtain
    \begin{align}
        0 \geq \int_{-\infty}^{0} \left[ |D^{-\frac{1}{2}}\xi_1(x)|^2 + \alpha |\xi_2(x)|^2 \right] \d x + \int_{0}^{+\infty} \left[ |D^{-\frac{1}{2}}\eta_1(x)|^2 + \alpha |\eta_2(x)|^2 \right] \d x,
    \end{align}
    where $\alpha = l\lambda + \epsilon- C_1 $. For $\lambda \geq \frac{C_1 }{l}$ and $\epsilon \in [0, \cic{3}]$, it follows that $\alpha \geq 0$. This implies $z_1(x) = z_2(x) \equiv 0$, which completes the proof.
\end{proof}

We define the constant 
\begin{align}\label{con:constant lambda}
    \hat{\lambda} \defeq \frac{C_1}{l},
\end{align}
which will be used extensively in the subsequent sections.

As a direct application of Lemma \ref{lem:non-trivial solution}, we obtain the following:

\begin{cor}\label{cor:non-intersection}
    Under condition (H1), for any fixed $\epsilon \in [0, \cic{3}]$, any $T \in \R$, and any $\tau \in (-\infty, T]$, it holds that 
    \[
    E^s_{\lambda, \epsilon}(T) \cap E_{\lambda, \epsilon}^u(\tau) = \{0\}
    \]
    for all $\lambda \geq \hat\lambda$, where $\hat{\lambda}$ is defined in \eqref{con:constant lambda}.
\end{cor}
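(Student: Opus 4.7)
The plan is to combine Lemma \ref{lem:non-trivial solution} with the equivalence stated immediately above it: the condition $E^s_{\lambda,\epsilon}(T)\cap E^u_{\lambda,\epsilon}(\tau)\neq\{0\}$ is equivalent to the existence of a matched non-trivial solution pair $(z_1,z_2)$ of the boundary value problems \eqref{eq:1st -}--\eqref{eq:1st +} with $z_1(0)=z_2(0)$. Granted this, the corollary is essentially a one-line contrapositive of the lemma, so I will argue by contradiction.

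Concretely, I would fix $\epsilon\in[0,\cic{3}]$, $T\in\R$, $\tau\in(-\infty,T]$, and $\lambda\ge\hat\lambda$, and assume that some nonzero vector $\xi$ lies in $E^s_{\lambda,\epsilon}(T)\cap E^u_{\lambda,\epsilon}(\tau)$. I would then produce the matched pair explicitly via the fundamental matrix $\Phi_{\lambda,\epsilon}$, by setting
\[
z_1(x) := \Phi_{\lambda,\epsilon}(x+\tau,\tau)\,\xi \quad \text{for } x\in(-\infty,0], \qquad z_2(x) := \Phi_{\lambda,\epsilon}(x+T,T)\,\xi \quad \text{for } x\in[0,+\infty).
\]
Differentiation using the identity $\partial_x\Phi_{\lambda,\epsilon}(x,\sigma)=JA_{\lambda,\epsilon}(x)\Phi_{\lambda,\epsilon}(x,\sigma)$ shows that each $z_i$ solves the shifted Hamiltonian ODE in the corresponding system; the endpoint values $z_1(0)=z_2(0)=\xi$ deliver simultaneously the matching condition and the required subspace membership at $x=0$; and the decay at $\pm\infty$ is the defining property of $E^u_{\lambda,\epsilon}(\tau)$ and $E^s_{\lambda,\epsilon}(T)$ after the changes of variable $y=x+\tau$ and $y=x+T$, respectively. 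Hence $(z_1,z_2)$ is a non-trivial matched pair, which directly contradicts Lemma \ref{lem:non-trivial solution} since $\lambda\ge\hat\lambda=C_1/l$. This forces the intersection to be trivial.

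I do not anticipate a genuine analytic obstacle: the entire argument is bookkeeping. The only care needed is to translate between the original $x$-variable on $\R$ used to define $E^{s/u}_{\lambda,\epsilon}$ and the half-line variable appearing in \eqref{eq:1st -}--\eqref{eq:1st +}, so that the decay of $x\mapsto\Phi_{\lambda,\epsilon}(x+\tau,\tau)\xi$ as $x\to-\infty$ is correctly identified with the unstable-subspace condition on $\xi$ at the parameter value $\tau$. Notably, the proof does not need to distinguish the subcases $\tau<T$ and $\tau=T$, because Lemma \ref{lem:non-trivial solution} is uniform in this ordering and only the threshold $\lambda\ge\hat\lambda$ is relevant.
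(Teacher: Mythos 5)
Your proposal is correct and follows essentially the same route as the paper, which presents the corollary as a direct application of Lemma \ref{lem:non-trivial solution} via the equivalence (stated just before that lemma) between a non-trivial intersection $E^s_{\lambda,\epsilon}(T)\cap E^u_{\lambda,\epsilon}(\tau)\neq\{0\}$ and the existence of a matched non-trivial solution pair of \eqref{eq:1st -}--\eqref{eq:1st +}; you merely make that equivalence explicit by writing the pair in terms of the fundamental matrix. The bookkeeping with the shifts $x+\tau$ and $x+T$ is handled correctly, so no gap remains.
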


In the case where $\tau = T = 0$, the systems \eqref{eq:1st -} and \eqref{eq:1st +} reduce to the linear Hamiltonian system \eqref{eq:h.s.with two parameters}. Consequently, we have the following result:

\begin{cor}\label{cor:nondegenerate operator}
    Under condition (H1), for any fixed $\epsilon \in [0, \cic{3}]$, the operator $\mathcal{F}_{\lambda, \epsilon}$ is non-degenerate for all $\lambda \geq \hat\lambda$, where $\hat{\lambda}$ is defined in \eqref{con:constant lambda}.
\end{cor}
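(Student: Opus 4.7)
The plan is to reduce this statement directly to Corollary~\ref{cor:non-intersection} by specializing to $\tau = T = 0$. In that degenerate case the two one-sided Hamiltonian boundary value problems \eqref{eq:1st -} and \eqref{eq:1st +} fuse into the single two-sided system \eqref{eq:h.s.with two parameters}, whose exponentially decaying solutions are precisely the elements of $\ker \mathcal{F}_{\lambda,\epsilon}$. So the corollary should follow with essentially no new computation.

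First, I would establish the standard identification
\[
z \in \ker \mathcal{F}_{\lambda,\epsilon} \;\Longleftrightarrow\; z(0) \in E^s_{\lambda,\epsilon}(0) \cap E^u_{\lambda,\epsilon}(0).
\]
If $z$ lies in the kernel, then $z$ solves $z'(x) = JA_{\lambda,\epsilon}(x)z(x)$ and belongs to the natural $L^2$-based domain of $\mathcal{F}_{\lambda,\epsilon}$. The hyperbolicity of the asymptotic matrix $JA_{\lambda,\epsilon}(\infty)$ supplied by Lemma~\ref{lem:matrix hyperbolic} then forces $z$ to decay exponentially as $x \to \pm\infty$, so by definition $z(0) \in E^u_{\lambda,\epsilon}(0) \cap E^s_{\lambda,\epsilon}(0)$. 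Conversely, any vector in this intersection extends uniquely, by forward and backward propagation from the Cauchy problem at $x = 0$, to a solution decaying at both infinities, and is therefore in the kernel.

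Second, I would apply Corollary~\ref{cor:non-intersection} with $T = 0$ and $\tau = 0 \in (-\infty, 0]$, which yields $E^s_{\lambda,\epsilon}(0) \cap E^u_{\lambda,\epsilon}(0) = \{0\}$ for every $\lambda \geq \hat{\lambda}$ and every $\epsilon \in [0, C_3]$. Combined with the equivalence above, this immediately gives $\ker \mathcal{F}_{\lambda,\epsilon} = \{0\}$, i.e., non-degeneracy of $\mathcal{F}_{\lambda,\epsilon}$.

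There is no real obstacle here: all the hard analytic work — the hyperbolicity bound (Lemma~\ref{lem:matrix hyperbolic}), the Fredholm property (Corollary~\ref{cor:fredholm operator}), and the integration-by-parts estimate that produces the threshold $\hat{\lambda} = C_1/l$ (Lemma~\ref{lem:non-trivial solution}) — has already been done upstream. The only mild subtlety is choosing the domain of $\mathcal{F}_{\lambda,\epsilon}$ so that the evaluation map $z \mapsto z(0)$ is a genuine bijection onto $E^s_{\lambda,\epsilon}(0) \cap E^u_{\lambda,\epsilon}(0)$; this is precisely the Fredholm setting used earlier, in which exponential decay of kernel elements is automatic from the asymptotic dichotomy.
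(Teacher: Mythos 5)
Your proposal is correct and follows essentially the same route as the paper: the authors simply note that at $\tau = T = 0$ the two one-sided boundary value problems \eqref{eq:1st -} and \eqref{eq:1st +} glue into the full-line system \eqref{eq:h.s.with two parameters}, whose nontrivial decaying solutions are exactly the kernel elements of $\mathcal{F}_{\lambda,\epsilon}$, and then invoke Lemma \ref{lem:non-trivial solution} (equivalently, Corollary \ref{cor:non-intersection} at $\tau=T=0$, as you do). Your extra care about the identification $\ker\mathcal{F}_{\lambda,\epsilon} \leftrightarrow E^s_{\lambda,\epsilon}(0)\cap E^u_{\lambda,\epsilon}(0)$ via exponential dichotomy is a welcome elaboration of what the paper leaves implicit, but it is not a different argument.
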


\begin{lem}\cite[Theorem 2.1]{AM03}\label{lem:space convergence}
    Under Condition (H1), for any fixed $\epsilon \in [0, \cic{3}]$ and each $\lambda \in [0, +\infty)$, the following hold:
    \begin{itemize}
        \item[(i)] The stable and unstable subspaces satisfy
            \[
            \lim_{\tau \to +\infty} E_{\lambda, \epsilon}^s(\tau) = V^-(JA_{\lambda, \epsilon}(\infty)) \quad \text{and} \quad \lim_{\tau \to -\infty} E_{\lambda, \epsilon}^u(\tau) = V^+(JA_{\lambda, \epsilon}(\infty))
            \]
            in the gap metric topology of $\Lag{n}$.
            
        \item[(ii)] For any subspace $W \subset \R^{2n}$ complementary to $E_{\lambda, \epsilon}^s(\tau)$ (resp. $E_{\lambda, \epsilon}^u(\tau)$):
            \[
            \Phi_{\lambda, \epsilon}(\sigma, \tau)W \to V^{+}(J A_{\lambda, \epsilon}(\infty)) \quad (\text{resp.} \ V^{-}(J A_{\lambda, \epsilon}(\infty))),
            \]
            where $\Phi_{\lambda, \epsilon}(\sigma, \tau)$ denotes the fundamental matrix solution for the linear Hamiltonian system \eqref{eq:h.s.with two parameters}.
    \end{itemize}
\end{lem}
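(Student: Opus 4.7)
The plan is to derive both conclusions from the theory of exponential dichotomies, exploiting the hyperbolicity of the asymptotic matrix $JA_{\lambda,\epsilon}(\infty)$ established in Lemma \ref{lem:matrix hyperbolic}. Since $A_{\lambda,\epsilon}(x)\to A_{\lambda,\epsilon}(\infty)$ as $|x|\to\infty$ and the limiting matrix has no purely imaginary eigenvalues, the autonomous equation $z' = JA_{\lambda,\epsilon}(\infty)z$ admits an exponential dichotomy on $\mathbb{R}$ whose stable and unstable spectral projections have ranges $V^-(JA_{\lambda,\epsilon}(\infty))$ and $V^+(JA_{\lambda,\epsilon}(\infty))$ respectively. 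By the classical roughness theorem for exponential dichotomies, the nonautonomous system \eqref{eq:h.s.with two parameters} then inherits exponential dichotomies on half-lines $[T_0,\infty)$ and $(-\infty,-T_0]$ for $T_0$ sufficiently large, with projections $P^{s/u}_{\lambda,\epsilon}(\tau)$.

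For part (i), I would identify $E^s_{\lambda,\epsilon}(\tau)$ with the range of $P^s_{\lambda,\epsilon}(\tau)$ for $\tau \geq T_0$ and show that $P^s_{\lambda,\epsilon}(\tau)$ converges in operator norm to the autonomous stable spectral projection as $\tau\to+\infty$. This can be done by representing the projection as a contour integral of the resolvent of the time-one evolution operator over a compact window $[\tau,\tau+1]$ and then passing to the limit using the uniform convergence of the translated coefficients $A_{\lambda,\epsilon}(\,\cdot+\tau)$ to the constant matrix $A_{\lambda,\epsilon}(\infty)$ on this window. Since norm convergence of projections is equivalent to gap-metric convergence of their ranges, this delivers the first limit in (i); the assertion for $E^u_{\lambda,\epsilon}(\tau)$ as $\tau\to-\infty$ is proven by the mirror-image argument on $(-\infty,-T_0]$. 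Part (ii) is then a linear inclination lemma: given $W$ complementary to $E^s_{\lambda,\epsilon}(\tau)$, pick a basis of $W$ and decompose each basis vector along the asymptotic splitting $V^+\oplus V^-$; transversality combined with part (i) forces the $V^+$-component of each basis vector to be nonzero for $\tau$ large. The dichotomy then contracts the $V^-$-component and expands the $V^+$-component, so writing $\Phi_{\lambda,\epsilon}(\sigma,\tau)W$ as the graph of a linear map from $V^+$ to $V^-$, this graph map tends to zero in operator norm, whence $\Phi_{\lambda,\epsilon}(\sigma,\tau)W \to V^+(JA_{\lambda,\epsilon}(\infty))$ in the gap metric. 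The case $\sigma\to-\infty$ is handled symmetrically.

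The main technical obstacle I anticipate is ensuring uniform control of the roughness estimates and of the dichotomy rates when translating norm convergence of projections into gap-metric convergence of subspaces, particularly because the standing hypotheses provide only continuity of $B(\cdot)$ and the uniform bound \eqref{eq:constant C_1}, rather than a prescribed decay rate for $A_{\lambda,\epsilon}(x)-A_{\lambda,\epsilon}(\infty)$. This is precisely the abstract framework treated in Abbondandolo--Majer; in a self-contained write-up I would verify that their hypotheses are satisfied here --- namely, boundedness of $A_{\lambda,\epsilon}$ on $\mathbb{R}$ together with hyperbolicity of the asymptotic matrix guaranteed by Lemma \ref{lem:matrix hyperbolic} --- and invoke their Theorem 2.1 directly, which supplies both (i) and (ii) simultaneously.
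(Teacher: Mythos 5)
Your proposal is correct and matches the paper's approach: the paper gives no independent proof of this lemma, simply verifying the hyperbolicity of $JA_{\lambda,\epsilon}(\infty)$ (Lemma \ref{lem:matrix hyperbolic}) and the existence of the limit $A_{\lambda,\epsilon}(\infty)$, and then citing \cite[Theorem 2.1]{AM03} exactly as you do in your final paragraph. Your additional sketch of the exponential-dichotomy/roughness and inclination-lemma argument is a sound outline of what lies behind that citation, so there is nothing to correct.
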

\begin{prop}\label{pro:maslov=sp}
    Under conditions (H1) and (H2), for any fixed $\epsilon \in [0, \cic{3}]$, there exists a constant $T_\epsilon > 0$ such that, for all $T\geq T_\epsilon$, 
    \begin{align}
        \iCLM(E^s_{\epsilon}(T), E^u_{\epsilon}(\tau), \tau \in (-\infty, T]) = \spfl{\mathcal{F}_{\lambda, \epsilon}, \lambda \in [0, \hat{\lambda}]},
    \end{align}
    where $\hat{\lambda}$ is defined in \eqref{con:constant lambda}.
\end{prop}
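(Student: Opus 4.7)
The plan is to apply the standard homotopy-invariance argument for the Maslov index on a compact rectangle in the $(\tau,\lambda)$-plane, closely paralleling the strategy used to establish \eqref{eq:sl formula for CH} in \cite{CH14}, but now applied to the perturbed family $\mathcal{F}_{\lambda,\epsilon}$. Concretely, for $R>0$ large I work on the rectangle $\mathcal{R}_R\defeq[-R,T]\times[0,\hat\lambda]$ and consider the two-parameter family of Lagrangian pairs $\bigl(E^s_{\lambda,\epsilon}(T),\,E^u_{\lambda,\epsilon}(\tau)\bigr)$, where the ``reference'' Lagrangian $E^s_{\lambda,\epsilon}(T)$ is held constant in $\tau$ but is allowed to vary with $\lambda$; both are Lagrangian by the lemma following Corollary~\ref{cor:fredholm operator}.

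By path-additivity and homotopy invariance of $\iCLM$ along the oriented boundary $\partial\mathcal{R}_R$, I obtain the four-term identity
\begin{align*}
&\iCLM\bigl(E^s_{\epsilon}(T),E^u_{\epsilon}(\tau);\tau\in[-R,T]\bigr)+\iCLM\bigl(E^s_{\lambda,\epsilon}(T),E^u_{\lambda,\epsilon}(T);\lambda\in[0,\hat\lambda]\bigr)\\
&\qquad-\iCLM\bigl(E^s_{\hat\lambda,\epsilon}(T),E^u_{\hat\lambda,\epsilon}(\tau);\tau\in[-R,T]\bigr)-\iCLM\bigl(E^s_{\lambda,\epsilon}(T),E^u_{\lambda,\epsilon}(-R);\lambda\in[0,\hat\lambda]\bigr)=0.
\end{align*}
The top edge ($\lambda=\hat\lambda$) vanishes directly by Corollary~\ref{cor:non-intersection}, since $E^s_{\hat\lambda,\epsilon}(T)\cap E^u_{\hat\lambda,\epsilon}(\tau)=\{0\}$ for every $\tau\le T$. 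For the left edge ($\tau=-R$) I invoke Lemma~\ref{lem:space convergence}(i) together with continuous dependence on $\lambda\in[0,\hat\lambda]$: as $R\to\infty$, $E^u_{\lambda,\epsilon}(-R)$ approaches $V^+(JA_{\lambda,\epsilon}(\infty))$ and, for correspondingly large $T$, $E^s_{\lambda,\epsilon}(T)$ approaches $V^-(JA_{\lambda,\epsilon}(\infty))$ in the gap metric, uniformly in $\lambda$; by the hyperbolicity established in Lemma~\ref{lem:matrix hyperbolic} these asymptotic subspaces are transverse, so for all sufficiently large $R$ and a corresponding threshold $T_\epsilon$ the left edge carries no crossings and contributes zero.

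The remaining right edge $\iCLM\bigl(E^s_{\lambda,\epsilon}(T),E^u_{\lambda,\epsilon}(T);\lambda\in[0,\hat\lambda]\bigr)$ is identified with $\spfl{\mathcal{F}_{\lambda,\epsilon},\lambda\in[0,\hat\lambda]}$ by a crossing-form computation. A nontrivial intersection at $\lambda_0$ corresponds precisely to $\ker\mathcal{F}_{\lambda_0,\epsilon}\ne\{0\}$; differentiating the coefficient matrix gives $\partial_\lambda A_{\lambda,\epsilon}=-\,\diag(0,QM)$, so the standard Hamiltonian crossing-form formula reduces, on a kernel element $z=(QD\phi',\phi)^\top$, to an expression proportional to $\inpro{QM\phi}{\phi}$. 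Combining the defining equation $D\phi''+QB_\epsilon\phi=\lambda_0 M\phi$ with hypotheses (H1) and (H2) via the integration-by-parts scheme of Lemma~\ref{lem:non-trivial solution} pins down the sign of this form on each kernel, so all crossings are regular and of common sign and the Maslov index on this edge coincides (with the correct convention) with $\spfl{\mathcal{F}_{\lambda,\epsilon},\lambda\in[0,\hat\lambda]}$. Taking $R\to\infty$ in the bottom edge then yields $\iCLM(E^s_\epsilon(T),E^u_\epsilon(\tau);\tau\in(-\infty,T])$, and the stability of the left-hand side for all $T\ge T_\epsilon$ follows from the same transversality argument used on the left edge (enlarging $T$ within this range produces no new crossings).

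The principal obstacle will be the crossing-form analysis on the right edge: although $\partial_\lambda A_{\lambda,\epsilon}$ has an obvious block structure, the quadratic form $\inpro{QM\phi}{\phi}$ is \emph{not} sign-definite because $Q$ has mixed signature, so the definiteness of the crossing form on the kernel is not automatic and must be extracted from the eigenvalue equation together with (H1)--(H2). A secondary technical point is to make the convergence in Lemma~\ref{lem:space convergence}(i) uniform in $\lambda\in[0,\hat\lambda]$, so that a single pair $(R,T_\epsilon)$ enforces left-edge transversality across the entire spectral interval; this uniformity should follow from joint continuity of the asymptotic projectors in $(\lambda,\epsilon)$ combined with the compactness of $[0,\hat\lambda]\times\{\epsilon\}$.
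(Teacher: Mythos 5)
Your overall skeleton is the same as the paper's: a homotopy of the pair $\bigl(E^s_{\lambda,\epsilon}(T_\epsilon),E^u_{\lambda,\epsilon}(\tau)\bigr)$ over a rectangle in $(\tau,\lambda)$, with the top edge killed by Corollary~\ref{cor:non-intersection} and the left edge killed by Lemma~\ref{lem:space convergence} plus hyperbolicity (and, as you note, a compactness argument to make the threshold uniform in $\lambda$). Up to that point the proposal is sound.

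The genuine gap is in your treatment of the right edge. You propose to identify $\iCLM\bigl(E^s_{\lambda,\epsilon}(T),E^u_{\lambda,\epsilon}(T);\lambda\in[0,\hat\lambda]\bigr)$ with the spectral flow by showing that all $\lambda$-crossings ``are regular and of common sign,'' extracted from (H1)--(H2) via the eigenvalue equation. This cannot work: the crossing form in $\lambda$ is governed by $\int\inpro{QM\phi}{\phi}\,\d x$ on kernel elements, and under (H1)--(H2) alone this quantity has no fixed sign. The paper itself exhibits both signs: Lemma~\ref{lem:sf<=0} produces a \emph{negative} contribution at the crossing $\lambda=0$ whenever $\int\inpro{QMw_0'}{w_0'}\,\d x<0$, while Lemma~\ref{lem:stabilty=unstable eigenvalue} needs the \emph{additional} hypotheses $-G_2>0$ and $I>G_3(-G_2)^{-2}G_3^*$ (e.g.\ $\tau<\gamma^2$ in the FitzHugh--Nagumo application) to force positivity. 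Moreover, definiteness is not what the identification requires: the correct statement is that the signed crossing count of the Lagrangian pair along the $\lambda$-edge equals minus the spectral flow crossing by crossing, regardless of signs. The paper gets this from Proposition~\ref{pro:Hp sf=malsov inde} (the spectral flow formula of \cite{HP17}) applied to the pair at $x=0$, transported to $x=T_\epsilon$ by the symplectic invariance of $\iCLM$ under the fundamental solution $\Phi_{\lambda,\epsilon}(T_\epsilon,0)$. The same symplectic-invariance device is also how the paper passes from the fixed $T_\epsilon$ to arbitrary $T\ge T_\epsilon$ (conjugating by $\Phi_\epsilon(T_\epsilon,T)$), which is cleaner than your claim that enlarging $T$ ``produces no new crossings'' --- note that enlarging $T$ changes the reference Lagrangian $E^s_\epsilon(T)$ as well as the path, so that assertion needs justification. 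Replace the definiteness argument on the right edge with the citation of Proposition~\ref{pro:Hp sf=malsov inde} plus symplectic invariance and the proof closes.
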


\begin{proof}
    Let $\epsilon \in [0, \cic{3}]$ be fixed. For $\lambda \geq 0$, the matrix $JA_{\lambda, \epsilon}(\infty)$ is hyperbolic. Consequently, the intersection of the asymptotic subspaces $E_{\lambda, \epsilon}^u(-\infty) \cap E_{\lambda, \epsilon}^s(+\infty)$ is trivial. According to Lemma \ref{lem:space convergence}, we have
    \[
    \lim_{\tau \to -\infty} E_{\lambda, \epsilon}^u(\tau) = E_{\lambda, \epsilon}^u(-\infty) \quad \text{and} \quad \lim_{\tau \to \infty} E_{\lambda, \epsilon}^s(\tau) = E_{\lambda, \epsilon}^s(+\infty).
    \]
    Thus, for any $\lambda \in [0, \hat{\lambda}]$, there exists a sufficiently large $T_\epsilon > 0$ such that
    \begin{align}\label{eq:non-intersection 1}
        E_{\lambda, \epsilon}^u(\tau) \cap E_{\lambda, \epsilon}^s(T_\epsilon) = \{0\}
    \end{align}
    for all $\tau \leq -T_\epsilon$.

    We construct a family of Lagrangian paths parametrized by $(\tau, \lambda) \in (-\infty, T_\epsilon] \times [0, \hat{\lambda}]$ given by the pairs $(E^s_{\lambda, \epsilon}(T_\epsilon), E^u_{\lambda, \epsilon}(\tau))$. By the homotopy invariance of the Maslov index, and invoking Corollary \ref{cor:non-intersection} and \eqref{eq:non-intersection 1}, we obtain
    \begin{align}
       & \iCLM(E^s_{0, \epsilon}(T_\epsilon), E^u_{0, \epsilon}(\tau), \tau \in (-\infty, T_\epsilon]) + \iCLM(E^s_{\lambda, \epsilon}(T_\epsilon), E^u_{\lambda, \epsilon}(T_\epsilon), \lambda \in [0, \hat{\lambda}]) \nonumber \\
       =& \iCLM(E_{\hat{\lambda}, \epsilon}^s(T_\epsilon), E_{\hat{\lambda}, \epsilon}^u(\tau), \tau \in (-\infty, T_\epsilon]) + \iCLM(E^s_{\lambda, \epsilon}(T_\epsilon), E^u_{\lambda, \epsilon}(-\infty), \lambda \in [0, \hat{\lambda}]) = 0.
    \end{align}
    The vanishing of the right-hand side is due to the lack of non-trivial intersections along those paths. This implies
    \begin{align}
       \iCLM(E^s_{\epsilon}(T_\epsilon), E^u_{\epsilon}(\tau), \tau \in (-\infty, T_\epsilon]) = -\iCLM(E^s_{\lambda, \epsilon}(T_\epsilon), E^u_{\lambda, \epsilon}(T_\epsilon), \lambda \in [0, \hat{\lambda}]).
    \end{align}
    By Proposition \ref{pro:Hp sf=malsov inde } and the symplectic invariance of the Maslov index,  for $T\geq T_\epsilon$, it follows that
    \begin{align}
         &\iCLM(E^s_{\epsilon}(T), E^u_{\epsilon}(\tau), \tau \in (-\infty, T]) = \iCLM(\Phi_\epsilon(T_\epsilon,T)E^s_{\epsilon}(T),\Phi_\epsilon(T_\epsilon,T) E^u_{\epsilon}(\tau), \tau \in (-\infty, T]) \\
        =&\iCLM(E^s_{\epsilon}(T_\epsilon), E^u_{\epsilon}(\tau), \tau \in (-\infty, T_\epsilon]) = -\iCLM(E^s_{\lambda, \epsilon}(T_\epsilon), E^u_{\lambda, \epsilon}(T_\epsilon), \lambda \in [0, \hat{\lambda}]) \nonumber \\ 
        =& -\iCLM(\Phi_{\lambda, \epsilon}(T_\epsilon, 0) E^s_{\lambda, \epsilon}(0), \Phi_{\lambda, \epsilon}(T_\epsilon, 0) E^u_{\lambda, \epsilon}(0), \lambda \in [0, \hat{\lambda}]) \nonumber \\
        =& -\iCLM(E^s_{\lambda, \epsilon}(0), E^u_{\lambda, \epsilon}(0), \lambda \in [0, \hat{\lambda}]) \nonumber 
        = \spfl{\mathcal{F}_{\lambda, \epsilon}, \lambda \in [0, \hat{\lambda}]}.
    \end{align}
    This completes the proof.
\end{proof}
\begin{lem}\label{lem:sf not change}
    Under condition (H1), there exists $\hat{\epsilon} \in (0, \cic{3}]$ such that for all $\epsilon \in [0, \hat{\epsilon}]$, the following identity holds:
    \begin{align}
        \spfl{\mathcal{F}_{\lambda, 0}, \lambda \in [0, \hat{\lambda}]} = \spfl{\mathcal{F}_{\lambda, \epsilon}, \lambda \in [0, \hat{\lambda}]},
    \end{align}
     where $\hat{\lambda}$ is defined in \eqref{con:constant lambda}.
\end{lem}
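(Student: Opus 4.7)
The plan is to apply the homotopy invariance of the spectral flow to the two-parameter Fredholm family $\{\mathcal{F}_{\lambda,\epsilon}\}$ over the rectangle $R \defeq [0,\hat\lambda]\times[0,\hat\epsilon]$. By Corollary~\ref{cor:fredholm operator}, every operator in $R$ is Fredholm, and the family depends continuously on the two parameters in the gap topology. Traversing $\partial R$ counterclockwise and using the concatenation property of the spectral flow yields
\begin{align*}
    \spfl{\mathcal{F}_{\lambda, 0}, \lambda \in [0, \hat\lambda]} + \spfl{\mathcal{F}_{\hat\lambda, \epsilon}, \epsilon \in [0, \hat\epsilon]} = \spfl{\mathcal{F}_{\lambda, \hat\epsilon}, \lambda \in [0, \hat\lambda]} + \spfl{\mathcal{F}_{0, \epsilon}, \epsilon \in [0, \hat\epsilon]},
\end{align*}
so the lemma reduces to showing that both vertical spectral flows on the right-hand side vanish, provided $\hat\epsilon$ is chosen small enough.

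The right vertical side is immediate: by Corollary~\ref{cor:nondegenerate operator}, $\mathcal{F}_{\hat\lambda,\epsilon}$ is non-degenerate for every $\epsilon\in[0,C_3]$, so the path has no spectral crossings and $\spfl{\mathcal{F}_{\hat\lambda,\epsilon},\epsilon\in[0,\hat\epsilon]} = 0$. The left vertical side is the delicate part, because $\mathcal{F}_{0,0}$ is degenerate: its kernel contains the translation generator $z_0 = (QDw_0'', w_0')^\top$, as a direct differentiation of $Dw_0'' + Q\nabla V(w_0) = 0$ shows. I would compute the crossing form at $(0,0)$ in the $\epsilon$-direction directly from $\partial_\epsilon \mathcal{F}_{\lambda,\epsilon}|_{(0,0)} = \diag(0, I_n)$, which gives
\begin{align*}
    \Gamma_\epsilon(z_0, z_0) = \int_{\mathbb{R}} |w_0'(x)|^2 \, \d x > 0.
\end{align*}
Thus the crossing at $\epsilon = 0$ is regular and the crossing form is positive definite on the entire kernel $\ker\mathcal{F}_{0,0}$, so the eigenvalues of $\mathcal{F}_{0,\epsilon}$ branching from $0$ at $\epsilon = 0$ move monotonically away from $0$ as $\epsilon$ leaves $0$. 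Choosing $\hat\epsilon \in (0, C_3]$ smaller than the first positive value at which any other eigenvalue of $\mathcal{F}_{0,\epsilon}$ could return to $0$ (which exists by continuity of the discrete spectrum), we obtain $\ker\mathcal{F}_{0,\epsilon} = \{0\}$ for all $\epsilon\in(0,\hat\epsilon]$. Under the spectral flow convention adopted in the paper, which assigns zero contribution to a positive definite initial crossing form, this yields $\spfl{\mathcal{F}_{0,\epsilon},\epsilon\in[0,\hat\epsilon]} = 0$.

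The principal obstacle is the treatment of the degenerate corner $(0,0)$ and the need to verify that the convention implicit in $\spfl{\mathcal{F}_{\lambda,0},\lambda\in[0,\hat\lambda]}$ is mutually consistent with the rectangular homotopy described above. This can be handled either by perturbing the base point to $\lambda = \delta > 0$ to make every operator on $\partial R$ non-degenerate, applying the standard rectangular identity on the shifted rectangle, and letting $\delta \to 0^+$; or by appealing directly to the Robbin--Salamon signature formula at endpoint crossings, which gives $0$ on the left side because the $\epsilon$-crossing form is positive definite. Once this reconciliation is in place, combining the two vanishing identities with the rectangular equality completes the proof.
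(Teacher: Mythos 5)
Your proposal is correct and follows essentially the same route as the paper: a rectangular homotopy in $(\lambda,\epsilon)$, vanishing of the $\epsilon$-edge at $\lambda=\hat\lambda$ via Corollary~\ref{cor:nondegenerate operator}, and vanishing of the $\epsilon$-edge at $\lambda=0$ because the $\epsilon$-direction crossing form at the degenerate corner equals $\int_{\mathbb{R}}|\phi|^2\,\d x>0$ on the whole kernel, hence contributes nothing under the sign convention of \eqref{eq:compute sf by crossing form}. Your extra care about reconciling the endpoint convention at the corner $(0,0)$ is a reasonable addition but not a departure from the paper's argument.
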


\begin{proof}
    Consider the spectral flow of the family $\{\mathcal{F}_\epsilon\}$ with ${\epsilon \in [0, \cic{3}]}$. Suppose that $\epsilon = 0$ is a crossing, i.e., $\ker \mathcal{F} \neq \{0\}$. For any $z(x) = (\psi(x), \phi(x))^\top \in \ker \mathcal{F}$, it follows from the Hamiltonian system that $\psi(x) = QD\phi'(x)$. A direct calculation of the crossing form at $\epsilon = 0$ yields
    \begin{align}\label{eq:crossing form at 0 positive}
        \inpro{\operatorname{Cr}[\mathcal{F}_0]z(x)}{z(x)} = \int_{-\infty}^{+\infty} |\phi(x)|^2 \d x > 0.
    \end{align}
    This positivity implies that $\epsilon = 0$ is a \textit{regular crossing}. Consequently, there exists a sufficiently small $\hat{\epsilon} \in (0, \cic{3}]$ such that no other crossings exist in the interval $(0, \hat{\epsilon}]$. That is, $\mathcal{F}_\epsilon$ is non-degenerate for all $\epsilon \in (0, \hat{\epsilon}]$. By the property of spectral flow for regular crossings, specifically \eqref{eq:compute sf by crossing form}, we have
    \begin{align}\label{eq:sf=0-1}
        \spfl{\mathcal{F}_{ \epsilon}, \epsilon \in [0, \hat{\epsilon}]} = 0,
    \end{align}
    where we use the fact that the spectral flow only changes when eigenvalues cross zero, and here it is either zero or the crossing does not result in a net change in the specific orientation. 

    Furthermore, by Corollary \ref{cor:nondegenerate operator}, since $\hat{\lambda}$ is chosen sufficiently large such that the operator is non-degenerate for all $\epsilon$, we have
    \begin{align}\label{eq:sf=0-2}
         \spfl{\mathcal{F}_{\hat{\lambda}, \epsilon}, \epsilon \in [0, \hat{\epsilon}]} = 0.
    \end{align}
    Invoking the homotopy invariance of the spectral flow on the rectangular domain $[0, \hat{\lambda}] \times [0, \hat{\epsilon}]$, we obtain
    \begin{align}
        \spfl{\mathcal{F}_{\lambda}, \lambda \in [0, \hat{\lambda}]} + \spfl{\mathcal{F}_{\hat{\lambda}, \epsilon}, \epsilon \in [0, \hat{\epsilon}]} = \spfl{\mathcal{F}_{\lambda, \hat{\epsilon}}, \lambda \in [0, \hat{\lambda}]} + \spfl{\mathcal{F}_{ \epsilon}, \epsilon \in [0, \hat{\epsilon}]}.
    \end{align}
    Substituting \eqref{eq:sf=0-1} and \eqref{eq:sf=0-2} into the above equation, we conclude that
    \begin{align}
        \spfl{\mathcal{F}_{\lambda}, \lambda \in [0, \hat{\lambda}]} = \spfl{\mathcal{F}_{\lambda, \hat{\epsilon}}, \lambda \in [0, \hat{\lambda}]}.
    \end{align}
    This completes the proof.
\end{proof}

\begin{lem}\label{lem:postive maslov = sf}
    Under conditions (H1) and (H2), for any fixed $\epsilon \in (0, \hat{\epsilon}]$ (where $\hat{\epsilon}$ is defined in Lemma \ref{lem:sf not change}), there exists a sufficiently large $\hat T_{\epsilon} > 0$ such that
    \[
    \iCLM(\Lambda_R, E^u_\epsilon(\tau), \tau \in (-\infty, T]) = \spfl{\mathcal{F}_{\lambda}, \lambda \in [0, \hat{\lambda}]}
    \]
    holds for all $T \geq \hat T_\epsilon$, where $\hat{\lambda}$ is defined in \eqref{con:constant lambda}.
\end{lem}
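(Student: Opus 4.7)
The plan is to combine Proposition \ref{pro:maslov=sp}, Lemma \ref{lem:sf not change}, and a Hörmander-index comparison, the last of which serves to swap the reference Lagrangian $E^s_\epsilon(T)$ for $\Lambda_R$ without altering the value of the Maslov index, once $T$ is sufficiently large.

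First I would pin down the asymptotic behavior of $E^u_\epsilon(\tau)$ as $\tau \to +\infty$. For $\epsilon \in (0, \hat\epsilon]$, the regular-crossing computation in the proof of Lemma \ref{lem:sf not change} shows that $\mathcal{F}_\epsilon$ is non-degenerate, hence $E^s_\epsilon(\tau) \cap E^u_\epsilon(\tau) = \{0\}$ for every $\tau \in \R$. Consequently $E^u_\epsilon(\tau_0)$ is complementary to $E^s_\epsilon(\tau_0)$, and Lemma \ref{lem:space convergence}(ii) yields $E^u_\epsilon(\tau) \to V^+(JA_{0,\epsilon}(\infty))$ as $\tau \to +\infty$ in the gap topology of $\Lag{n}$. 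Combining Lemma \ref{lem:transversal to LR} (transversality of $V^{\pm}(JA_{0,\epsilon}(\infty))$ to $\Lambda_R$) with Lemma \ref{lem:space convergence}(i) (the convergence $E^s_\epsilon(T) \to V^-(JA_{0,\epsilon}(\infty))$) then allows me to fix $\hat T_\epsilon \geq T_\epsilon$ large enough that for every $T \geq \hat T_\epsilon$ all pairwise intersections among $E^u_\epsilon(T),\ E^s_\epsilon(T),\ V^+(JA_{0,\epsilon}(\infty))$ and $\Lambda_R$ are trivial (except possibly between $\Lambda_R$ and $E^s_\epsilon(T)$), and moreover $E^u_\epsilon(T)$ sits in a prescribed neighborhood of $V^+(JA_{0,\epsilon}(\infty))$ in $\Lag{n}$.

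I would then invoke the Hörmander-index identity
\[
\iCLM(E^s_\epsilon(T), E^u_\epsilon(\tau),\, \tau \in (-\infty, T]) - \iCLM(\Lambda_R, E^u_\epsilon(\tau),\, \tau \in (-\infty, T]) = s\bigl(\Lambda_R,\, E^s_\epsilon(T);\, V^+(JA_{0,\epsilon}(\infty)),\, E^u_\epsilon(T)\bigr),
\]
where the identification $\lim_{\tau \to -\infty} E^u_\epsilon(\tau) = V^+(JA_{0,\epsilon}(\infty))$ is supplied by Lemma \ref{lem:space convergence}(i). The Hörmander index may be computed along any Lagrangian path joining the two endpoint subspaces, so I would select a short path $\gamma(s),\, s \in [0,1]$, from $V^+(JA_{0,\epsilon}(\infty))$ to $E^u_\epsilon(T)$ lying entirely inside the open set of Lagrangian subspaces simultaneously transverse to both $\Lambda_R$ and $E^s_\epsilon(T)$; its existence is guaranteed by openness of the transversality condition together with the proximity of the two endpoints established in the previous step. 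Along such a $\gamma$ the crossing forms vanish identically, so $\iCLM(\Lambda_R, \gamma(s); s) = \iCLM(E^s_\epsilon(T), \gamma(s); s) = 0$ and the Hörmander index is zero.

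Combining these facts with Proposition \ref{pro:maslov=sp} and Lemma \ref{lem:sf not change}, for every $T \geq \hat T_\epsilon$ one obtains
\[
\iCLM(\Lambda_R, E^u_\epsilon(\tau),\, \tau \in (-\infty, T]) = \iCLM(E^s_\epsilon(T), E^u_\epsilon(\tau),\, \tau \in (-\infty, T]) = \spfl{\mathcal{F}_{\lambda,\epsilon}, \lambda \in [0, \hat\lambda]} = \spfl{\mathcal{F}_\lambda, \lambda \in [0, \hat\lambda]},
\]
as required. The main obstacle I anticipate is the construction of the short path $\gamma(s)$ avoiding nontrivial intersections with \emph{both} reference Lagrangians simultaneously; this forces a quantitative comparison between $E^u_\epsilon(T)$ and $V^+(JA_{0,\epsilon}(\infty))$ in the gap metric, which ultimately rests on the hyperbolicity of $JA_{0,\epsilon}(\infty)$ provided by Lemma \ref{lem:matrix hyperbolic} and on the exponential-dichotomy estimates underlying Lemma \ref{lem:space convergence}.
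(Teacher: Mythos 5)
Your proposal is correct and follows essentially the same route as the paper: non-degeneracy of $\mathcal{F}_\epsilon$ for $\epsilon\in(0,\hat\epsilon]$ yields the limit $E^u_\epsilon(\tau)\to V^+(JA_\epsilon(\infty))$ via Lemma \ref{lem:space convergence}, the Hörmander index governing the swap of reference Lagrangian from $E^s_\epsilon(T)$ to $\Lambda_R$ is shown to vanish, and the conclusion follows by combining Proposition \ref{pro:maslov=sp} with Lemma \ref{lem:sf not change}. The only (immaterial) difference is in how that Hörmander index is evaluated: the paper lets $T\to\infty$ so that its first two arguments coincide, while you compute it along a short connecting path transverse to both reference Lagrangians; both give zero.
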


\begin{proof}
    Recall that for all $\epsilon \in (0, \hat{\epsilon}]$, the operator $\mathcal{F}_\epsilon$ is non-degenerate. This non-degeneracy implies that the intersection $E_\epsilon^s(0) \cap E^u_\epsilon(0) = \{0\}$ is trivial. Consequently, by part (ii) of Lemma \ref{lem:space convergence}, the unstable subspace satisfies the following asymptotic limit:
    \[
    \lim_{\tau \to +\infty} E^u_\epsilon(\tau) = V^+(JA_\epsilon(\infty)).
    \]
    
    By Lemma \ref{lem:transversal to LR}, the intersection $\Lambda_R \cap V^+(JA_\epsilon(\infty))$ is trivial. It follows that there exists a sufficiently large $\hat{T}_\epsilon \geq T_\epsilon$ such that for all $T \geq \hat{T}_\epsilon$, 
    \begin{align}
        \Lambda_R \cap E^u_\epsilon(T) = \{0\}.
    \end{align}
    Therefore, the Maslov index $\iCLM(\Lambda_R, E^u_\epsilon(\tau), \tau \in (-\infty, T])$ remains constant for all $T \geq \hat{T}_\epsilon$ due to the persistent transversality at the right endpoint.
    
    Using  Maslov index and its relation to the Hörmander index, a direct calculation yields
    \begin{align}
       & \iCLM(E^s_\epsilon(T_\epsilon), E^u_\epsilon(\tau), \tau \in (-\infty, T_\epsilon]) - \iCLM(\Lambda_R, E^u_\epsilon(\tau), \tau \in (-\infty, T_\epsilon])  \\
       &= s(E^u_\epsilon(-\infty), E^u_\epsilon(T_\epsilon); \Lambda_R, E_\epsilon^s(T_\epsilon)) \qquad\text{(see \eqref{eq:def of Hormand})}\\
       &= \lim_{T \to \infty} s(E^u_\epsilon(-\infty), E^u_\epsilon(T); \Lambda_R, E_\epsilon^s(T)).
    \end{align}
    
    Since $\lim_{T \to \infty} E^u_\epsilon(T) = V^+(JA_\epsilon(\infty))$ and $\lim_{T \to \infty} E^s_\epsilon(T) = V^-(JA_\epsilon(\infty))$, the limit of the Hörmander index evaluates to:
    \begin{align}
       s(V^+(JA_\epsilon(\infty)), V^+(JA_\epsilon(\infty)); \Lambda_R, V^-(JA_\epsilon(\infty))) = 0.
    \end{align}
    The vanishing of this term confirms that the two paths are homotopic in the Lagrangian Grassmannian relative to the endpoints. Combining this with the identity established in Lemma \ref{lem:sf not change}, we complete the proof.
\end{proof}

\begin{lem}\label{lem:maslov>=sf}
    Under conditions (H1) and (H2), there exists $T_0\geq 0$, for any $T \geq T_0$, the following inequality holds:
    \begin{align}
         \iCLM(\Lambda_R, E^u(\tau), \tau \in (-\infty, T]) \geq \spfl{\mathcal{F}_{\lambda}, \lambda \in [0, \hat{\lambda}]},
    \end{align}
    where $\hat{\lambda}$ is defined in \eqref{con:constant lambda}.
\end{lem}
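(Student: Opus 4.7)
The plan is to carry out Step 3 of the strategy announced after Proposition \ref{pro:spectral flow} by using the homotopy invariance of the Maslov index on the rectangular region $R = (-\infty, T] \times [0, \hat{\epsilon}]$ in the $(\tau, \epsilon)$-plane for the Lagrangian pair $(\Lambda_R, E^u_\epsilon(\tau))$. First I would choose $T_0 \geq \hat{T}_{\hat{\epsilon}}$ from Lemma \ref{lem:postive maslov = sf} and account for the four edges of $R$: on the left edge, $E^u_\epsilon(-\infty) = V^+(JA_\epsilon(\infty))$ is transverse to $\Lambda_R$ uniformly in $\epsilon$ by Lemmas \ref{lem:space convergence} and \ref{lem:transversal to LR}, so the left-edge contribution vanishes; on the top edge ($\epsilon = \hat{\epsilon}$), Lemma \ref{lem:postive maslov = sf} identifies the contribution with $\spfl{\mathcal{F}_\lambda, \lambda \in [0, \hat{\lambda}]}$. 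Additivity along the boundary of $R$ then yields
\begin{align}
  \iCLM(\Lambda_R, E^u(\tau), \tau \in (-\infty, T])
  = \spfl{\mathcal{F}_\lambda, \lambda \in [0, \hat{\lambda}]}
  - \iCLM(\Lambda_R, E^u_\epsilon(T), \epsilon \in [0, \hat{\epsilon}]),
\end{align}
so the lemma will follow once the sign condition $\iCLM(\Lambda_R, E^u_\epsilon(T), \epsilon \in [0, \hat{\epsilon}]) \leq 0$ is established.

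For the sign condition I would compute the crossing form directly. At a crossing $\epsilon_0 \in [0, \hat{\epsilon}]$ and a non-zero $z(T) \in \Lambda_R \cap E^u_{\epsilon_0}(T)$, take the decaying solution $z(x) = (\psi(x), \phi(x))^\top$ of $z' = JA_{0, \epsilon_0}(x)z$ on $(-\infty, T]$ with this endpoint, and a smooth $\epsilon$-family $\tilde{z}(\epsilon, \cdot) \in E^u_\epsilon(\cdot)$ with $\tilde{z}(\epsilon_0, T) = z(T)$. The variation $\eta = \partial_\epsilon \tilde{z}|_{\epsilon_0}$ satisfies an inhomogeneous Hamiltonian equation with forcing governed by $\partial_\epsilon A_{0, \epsilon} = \diag(0, -I)$ and decays at $-\infty$. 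A direct differentiation, in which the symmetry of $A_{0, \epsilon_0}(x)$ makes the homogeneous pieces cancel, produces $\tfrac{\d}{\d x}\omega(z, \eta) = -|\phi(x)|^2$, and integrating from $-\infty$ to $T$ gives the crossing form $\omega(z(T), \eta(T)) = -\int_{-\infty}^{T}|\phi(x)|^2 \, \d x \leq 0$. Equality forces $\phi \equiv 0$ on $(-\infty, T]$, and the Hamiltonian relation $\phi' = -D^{-1}Q\psi$ then forces $\psi \equiv 0$, hence $z \equiv 0$, contradicting $z(T) \neq 0$; thus the form is negative-definite, every crossing is regular (hence isolated), and each contributes a non-positive integer to the Maslov index on $[0, \hat{\epsilon}]$.

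The hard part of this plan is precisely this sign/regularity analysis of the crossing form: it is where (H1) is used essentially, both to keep $JA_\epsilon(\infty)$ hyperbolic uniformly in $\epsilon \in [0, \hat{\epsilon}]$ and to legitimize the limit on the left edge via Lemma \ref{lem:space convergence}. A secondary subtlety I would be careful about is the interpretation of the Maslov index on the non-compact left edge $\tau = -\infty$, which is made rigorous by the uniform transversality supplied by Lemma \ref{lem:transversal to LR}. Once the crossing form is shown to be non-positive, the inequality of the lemma follows immediately from the displayed identity.
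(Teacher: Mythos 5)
Your proposal is correct and follows essentially the same route as the paper: the same rectangular homotopy of $(\Lambda_R, E^u_\epsilon(\tau))$ in the $(\tau,\epsilon)$-plane, the top edge identified with $\spfl{\mathcal{F}_\lambda,\lambda\in[0,\hat{\lambda}]}$ via Lemma \ref{lem:postive maslov = sf} (and Lemma \ref{lem:sf not change}), the left edge killed by the transversality of Lemma \ref{lem:transversal to LR}, and the right-edge sign condition obtained from the crossing-form computation $\omega(z(T),\partial_\epsilon z(T))=-\int_{-\infty}^{T}|\phi|^2\,\d x$. Your added remark that vanishing of this integral forces $z\equiv 0$ (so the crossings are regular) is a harmless strengthening of what the paper states.
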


\begin{proof}
    From Lemma \ref{lem:postive maslov = sf}, for a given $\hat{\epsilon} \in (0, \cic{3}]$, there exists $T_0 > 0$ such that:
    \begin{align}\label{eq:maslov=sf-1}
        \iCLM(\Lambda_R, E_{\hat\epsilon}^u(\tau), \tau \in (-\infty, T]) = \spfl{\mathcal{F}_{\lambda, \hat\epsilon}, \lambda \in [0, \hat{\lambda}]}.
    \end{align}
    For a fixed $T \geq T_0$, we consider a homotopy of Lagrangian paths defined on the rectangle:
    \[
    \mathcal{H}(\epsilon, \tau) = (\Lambda_R, E^u_\epsilon(\tau)), \quad (\epsilon, \tau) \in [0, \hat\epsilon] \times [-\infty, T].
    \]
    
    By the homotopy invariance of the Maslov index, the sum of the indices along the four sides of the rectangle must vanish. Specifically:
    \begin{align}\label{eq:homotopy invariance of maslov index 1}
        & \iCLM(\Lambda_R, E^u_0(\tau), \tau \in (-\infty, T]) + \iCLM(\Lambda_R, E_\epsilon^u(T), \epsilon \in [0, \hat\epsilon]) \nonumber \\
        = & \iCLM(\Lambda_R, E_{\hat\epsilon}^u(\tau), \tau \in (-\infty, T]) + \iCLM(\Lambda_R, E_\epsilon^u(-\infty), \epsilon \in [0, \hat\epsilon]).
    \end{align}
    According to Lemma \ref{lem:transversal to LR}, the intersection $\Lambda_R \cap E^u_\epsilon(-\infty)$ is trivial for all $\epsilon \in [0, \hat\epsilon]$, which implies:
    \begin{align}\label{eq:maslov index =0-1}
        \iCLM(\Lambda_R, E_\epsilon^u(-\infty), \epsilon \in [0, \hat\epsilon]) = 0.
    \end{align}
    
    To evaluate $\iCLM(\Lambda_R, E_\epsilon^u(T), \epsilon \in [0, \hat\epsilon])$, we consider the corresponding Hamiltonian boundary value problem:
    \begin{align}
        \begin{cases}
            z'(x) = J A_{\epsilon}(x) z(x), & x \in (-\infty, T], \\
            \lim_{x \to -\infty} z(x) = 0, & z(T) \in \Lambda_R.
        \end{cases}
    \end{align}
    
    Let $\epsilon_0 \in [0, \hat\epsilon]$ be a crossing point, i.e., $\Lambda_R \cap E^u_{\epsilon_0}(T) \neq \{0\}$. We consider a smooth family of solutions $z_\epsilon(x) = (QD\phi'_\epsilon(x), \phi_\epsilon(x))^\top$ such that $z_{\epsilon_0}(T) \in \Lambda_R \cap E^u_{\epsilon_0}(T)$.
    
    Recall the structure of $A_{\epsilon}(x)$:
    \[
    A_{\epsilon}(x) = \begin{pmatrix} (QD)^{-1} & 0 \\ 0 & B(x) - \epsilon I \end{pmatrix}.
    \]
    A direct computation of the crossing form $\Gamma$ yields:
    \begin{align}\label{eq:maslov index form <0}
       \begin{split}
        & v^\top\Gamma(E^u_\epsilon(T), \Lambda_R, \epsilon_0) v\\ =& \left. \omega\left( z_\epsilon(T), \frac{\partial}{\partial \epsilon} z_\epsilon(T) \right) \right|_{\epsilon = \epsilon_0} 
        =\int_{-\infty}^{T} \frac{\partial}{\partial x} \omega\left( z_\epsilon(x), \frac{\partial}{\partial \epsilon} z_\epsilon(x) \right) \d x  \\
        =& \int_{-\infty}^{T} \left[ \omega(J A_\epsilon z_\epsilon, \partial_\epsilon z_\epsilon) + \omega(z_\epsilon, J \partial_\epsilon A_\epsilon z_\epsilon + J A_\epsilon \partial_\epsilon z_\epsilon) \right] \d x  = -\int_{-\infty}^{T} |\phi_{\epsilon_0}(x)|^2 \d x.
       \end{split}
    \end{align}
    
    Since the crossing form is negative definite, the contribution to the Maslov index at each crossing is non-positive. Thus:
    \begin{align}\label{eq:maslov index <0}
        \iCLM(\Lambda_R, E_\epsilon^u(T), \epsilon \in [0, \hat\epsilon]) \leq 0.
    \end{align}
    Substituting \eqref{eq:maslov=sf-1}, \eqref{eq:maslov index =0-1}, and \eqref{eq:maslov index <0} into \eqref{eq:homotopy invariance of maslov index 1}, and noting that $\spfl{\mathcal{F}_{\lambda, \hat\epsilon}} = \spfl{\mathcal{F}_{\lambda}}$ from Lemma \ref{lem:sf not change}, we conclude the proof.
\end{proof}

\begin{lem}\label{lem:maslov remain constant}
    Under condition (H1), for any $\epsilon \in [0, \hat{\epsilon}]$, there exists a constant $T_1 > 0$ independent of $\epsilon$ such that for all $T \geq T_1$:
    \[
    \iCLM(E^s_{\epsilon}(T), E^u_{\epsilon}(\tau), \tau \in (-\infty, T]) = \spfl{\mathcal{F}_{\lambda}, \lambda \in [0, \hat{\lambda}]},
    \]
    where $\hat{\lambda}$ is defined in \eqref{con:constant lambda}.
\end{lem}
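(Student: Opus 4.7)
The plan is to combine the per-$\epsilon$ formula furnished by Proposition \ref{pro:maslov=sp} with the $\epsilon$-invariance of the spectral flow established in Lemma \ref{lem:sf not change}, and then to upgrade the cut-off time $T_\epsilon$ (which a priori depends on $\epsilon$) to a single $T_1$ uniform for $\epsilon \in [0,\hat\epsilon]$ by a compactness argument. Concretely, for each fixed $\epsilon \in [0,\hat\epsilon]$, Proposition \ref{pro:maslov=sp} produces $T_\epsilon > 0$ such that
\[
\iCLM(E^s_\epsilon(T), E^u_\epsilon(\tau), \tau \in (-\infty, T]) = \spfl{\mathcal{F}_{\lambda,\epsilon}, \lambda \in [0, \hat\lambda]}
\]
for every $T \geq T_\epsilon$, and Lemma \ref{lem:sf not change} identifies the right-hand side with $\spfl{\mathcal{F}_{\lambda}, \lambda \in [0, \hat\lambda]}$. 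Thus the entire content of the lemma is the uniformity of $T_1$ in $\epsilon$.

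The decisive step in the proof of Proposition \ref{pro:maslov=sp} was to secure $T_\epsilon$ so that $E^s_{\lambda,\epsilon}(T_\epsilon) \cap E^u_{\lambda,\epsilon}(\tau) = \{0\}$ for all $\lambda \in [0,\hat\lambda]$ and all $\tau \leq -T_\epsilon$. To make this choice uniform in $\epsilon$, I would first observe that by Lemma \ref{lem:matrix hyperbolic} together with the continuity of the spectrum and the compactness of $[0,\hat\lambda] \times [0,\hat\epsilon]$, the family $\{JA_{\lambda,\epsilon}(\infty)\}$ is hyperbolic with a uniform spectral gap $\mu > 0$. Since the perturbation $B_\epsilon(x) - B_\epsilon(\infty) = B(x) - B(\infty)$ is independent of $\epsilon$ and decays as $|x| \to \infty$, the roughness theorem for exponential dichotomies yields a constant $C > 0$, independent of $(\lambda,\epsilon)$, such that
\[
\dist_{\Lag{n}}\bigl(E^u_{\lambda,\epsilon}(\tau), V^+(JA_{\lambda,\epsilon}(\infty))\bigr) \leq C e^{\mu\tau} \quad (\tau \leq 0),
\]
and symmetrically $\dist_{\Lag{n}}\bigl(E^s_{\lambda,\epsilon}(T), V^-(JA_{\lambda,\epsilon}(\infty))\bigr) \leq C e^{-\mu T}$ for $T \geq 0$. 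Combined with the uniform transversality $V^+(JA_{\lambda,\epsilon}(\infty)) \cap V^-(JA_{\lambda,\epsilon}(\infty)) = \{0\}$ on the compact parameter set, this furnishes a single $T_1 > 0$ such that
\[
E^s_{\lambda,\epsilon}(T) \cap E^u_{\lambda,\epsilon}(\tau) = \{0\} \quad \forall\, (\lambda,\epsilon) \in [0,\hat\lambda]\times[0,\hat\epsilon],\ T \geq T_1,\ \tau \leq -T_1.
\]

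With this uniform $T_1$ in hand, the homotopy argument of Proposition \ref{pro:maslov=sp} runs verbatim for each $\epsilon \in [0,\hat\epsilon]$: the rectangular Lagrangian path in $(\tau,\lambda) \in (-\infty, T_1]\times[0,\hat\lambda]$ has vanishing Maslov index on the $\tau = -\infty$ and $\tau = T_1$ sides (the former by the uniform transversality above, the latter by Corollary \ref{cor:non-intersection}), so homotopy invariance yields the identity at $T = T_1$. For general $T \geq T_1$, apply the symplectic map $\Phi_\epsilon(T_1, T)$, which carries $E^s_\epsilon(T)$ to $E^s_\epsilon(T_1)$ and $E^u_\epsilon(\tau)$ to $E^u_\epsilon(\tau')$ along a time-reparametrized path, and use the symplectic invariance of $\iCLM$ to transfer the equality. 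Combining with Lemma \ref{lem:sf not change} gives the claimed identity. The main obstacle is the uniform-in-$(\lambda,\epsilon)$ exponential dichotomy bound; while the pointwise hyperbolicity is already proved in Lemma \ref{lem:matrix hyperbolic}, extracting a single decay rate $\mu$ valid across the full compact parameter box requires continuity of the spectral projections and the roughness theorem, which must be invoked with some care because $\lambda$ and $\epsilon$ vary simultaneously.
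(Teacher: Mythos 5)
Your proposal is correct in substance but follows a genuinely different route from the paper. You make the transversality $E^s_{\lambda,\epsilon}(T)\cap E^u_{\lambda,\epsilon}(\tau)=\{0\}$ uniform over the whole compact box $[0,\hat\lambda]\times[0,\hat\epsilon]$ and then rerun the $(\tau,\lambda)$-rectangle homotopy of Proposition \ref{pro:maslov=sp} verbatim with a single cut-off $T_1$. The paper instead avoids any two-parameter uniformity: it fixes $\lambda=0$, runs a homotopy in $(\xi,\tau)\in[0,\epsilon]\times[-\infty,T]$ of the pairs $(E^s_\xi(T),E^u_\xi(\tau))$, kills the left edge using only the (one-parameter, compactness-based) uniform transversality $E^s_\epsilon(T)\cap E^u_\epsilon(-\infty)=\{0\}$, and identifies the right edge $\iCLM(E^s_\xi(T),E^u_\xi(T),\xi\in[0,\epsilon])$ as $T$-independent via symplectic invariance ($\Phi_\xi(0,T)$ carries it to $\xi\in[0,\epsilon]\mapsto(E^s_\xi(0),E^u_\xi(0))$); the resulting $T$-independence of the whole index, combined with Proposition \ref{pro:maslov=sp} for $T$ large, closes the argument. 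Your approach buys a more self-contained and conceptually direct proof, at the price of heavier input; the paper's buys economy of hypotheses.

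Two points in your write-up deserve correction. First, the exponential estimate $\dist(E^u_{\lambda,\epsilon}(\tau),V^+(JA_{\lambda,\epsilon}(\infty)))\le Ce^{\mu\tau}$ is not available from the paper's hypotheses: only $\lim_{|x|\to\infty}B(x)=B(\infty)$ is assumed, with no decay rate, so the roughness theorem in the form you quote does not apply. What you actually need is merely that the convergence of $E^u_{\lambda,\epsilon}(\tau)$ and $E^s_{\lambda,\epsilon}(T)$ to their asymptotic limits is uniform over the compact parameter box; this does follow from the construction in \cite{AM03} together with continuity of $(\lambda,\epsilon)\mapsto JA_{\lambda,\epsilon}(\infty)$ and its spectral projections, and is the same kind of compactness argument the paper itself invokes, so the gap is reparable but should be stated that way. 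Second, your description of the rectangle edges is garbled: the $\tau=T_1$ side, namely $\lambda\mapsto(E^s_{\lambda,\epsilon}(T_1),E^u_{\lambda,\epsilon}(T_1))$, does \emph{not} have vanishing Maslov index --- it is precisely the edge that produces $-\spfl{\mathcal{F}_{\lambda,\epsilon},\lambda\in[0,\hat\lambda]}$ after conjugating by $\Phi_{\lambda,\epsilon}(T_1,0)$. The edges that vanish are $\tau=-\infty$ (by your uniform transversality) and $\lambda=\hat\lambda$ (by Corollary \ref{cor:non-intersection}). As written, your accounting would force the index to be zero rather than equal to the spectral flow.
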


\begin{proof}
    Recall that for all $\epsilon \in (0, \hat{\epsilon}]$, the operator $\mathcal{F}_\epsilon$ is non-degenerate. This non-degeneracy implies that the intersection $E_\epsilon^s(0) \cap E^u_\epsilon(0) = \{0\}$ is trivial. Due to the Hamiltonian flow properties, the intersection $E_\epsilon^s(T) \cap E^u_\epsilon(T)$ remains trivial for any $T \geq 0$.
    
    From Proposition \ref{pro:maslov=sp}, there exists $T_0 > 0$ such that for all $T \geq T_0$:
    \begin{align}
         \iCLM(E^s(T), E^u(\tau), \tau \in (-\infty, T]) = \spfl{\mathcal{F}_{\lambda}, \lambda \in [0, \hat{\lambda}]}.
    \end{align}

    Since $E_\epsilon^s(+\infty) \cap E_\epsilon^u(-\infty) = \{0\}$ for all $\epsilon \in [0, \hat{\epsilon}]$, by the compactness of $[0, \hat{\epsilon}]$ and the continuity of the subspaces, there exists $T_1 \geq T_0$ independent of $\epsilon$ such that $E_\epsilon^s(T) \cap E^u_\epsilon(-\infty) = \{0\}$ for all $\epsilon \in [0, \hat{\epsilon}]$ and $T \geq T_1$. This transversality at the left boundary yields:
    \begin{align}\label{eq:left-edge-zero}
        \iCLM(E_\xi^s(T), E^u_\xi(-\infty), \xi \in [0, \epsilon]) = 0.
    \end{align}

    For a fixed $T \geq T_1$ and $\epsilon \in [0, \hat{\epsilon}]$, we construct a homotopy of Lagrangian paths on the rectangle $[0, \epsilon] \times [-\infty, T]$:
    \[
    \mathcal{H}(\xi, \tau) = (E_\xi^s(T), E^u_\xi(\tau)).
    \]
    
    By the homotopy invariance of the Maslov index, the sum of the indices along the boundary vanishes:
    \begin{align}\label{eq:homotopy-identity}
        \begin{split}
             & \iCLM(E^s(T), E^u(\tau), \tau \in (-\infty, T]) + \iCLM(E_\xi^s(T), E_\xi^u(T), \xi \in [0, \epsilon])  \\
         = & \iCLM(E_\epsilon^s(T), E_\epsilon^u(\tau), \tau \in (-\infty, T]) + \iCLM(E_\xi^s(T), E_\xi^u(-\infty), \xi \in [0, \epsilon]).
        \end{split}
    \end{align}
    Substituting \eqref{eq:left-edge-zero} into \eqref{eq:homotopy-identity}, we have:
    \begin{align}
     & \iCLM(E_\epsilon^s(T), E_\epsilon^u(\tau), \tau \in (-\infty, T])\\ =& \iCLM(E^s(T), E^u(\tau), \tau \in (-\infty, T]) + \iCLM(E_\xi^s(T), E_\xi^u(T), \xi \in [0, \epsilon]).
    \end{align}

    We observe that $\iCLM(E^s(T), E^u(\tau), \tau \in (-\infty, T])$ is constant for $T \geq T_1$. Furthermore, invoking the symplectic invariance of the Maslov index:
    \begin{align}
       \iCLM(E_\xi^s(T), E_\xi^u(T), \xi \in [0, \epsilon]) &= \iCLM(\Phi_\xi(0,T)E_\xi^s(T), \Phi_\xi(0,T) E_\xi^u(T), \xi \in [0, \epsilon]) \nonumber \\
       &= \iCLM(E_\xi^s(0), E_\xi^u(0), \xi \in [0, \epsilon]),
    \end{align}
    which is clearly independent of $T$. Thus, $\iCLM(E_\epsilon^s(T), E_\epsilon^u(\tau), \tau \in (-\infty, T])$ is independent of $T$ for $T \geq T_1$.

    When $T$ is large enough and applying Lemma \ref{pro:maslov=sp}, we conclude:
    \begin{align}
        \iCLM(E_\epsilon^s(T), E^u_\epsilon(\tau), \tau \in (-\infty, T]) = \spfl{\mathcal{F}_{\lambda}, \lambda \in [0, \hat{\lambda}]}.
    \end{align}
    This completes the proof.
\end{proof}

\begin{lem}\label{lem:maslov<=sf}
    Under conditions (H1) and (H2), for all $T \geq T_1$ (where $T_1$ is defined in Lemma \ref{lem:maslov remain constant}), the Maslov index satisfies the following upper bound:
    \[
     \iCLM(\Lambda_R, E^u(\tau), \tau \in (-\infty, T]) \leq \spfl{\mathcal{F}_{\lambda, 0}, \lambda \in [0, \hat{\lambda}]},
    \] where $\hat{\lambda}$ is defined in \eqref{con:constant lambda}.
\end{lem}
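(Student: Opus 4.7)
The plan is to establish the upper bound via a perturbation argument in $\epsilon$: first obtain the analogous inequality at $\epsilon = \tilde\epsilon > 0$, where $\mathcal{F}_{\tilde\epsilon}$ is non-degenerate and Lemma \ref{lem:space convergence}(ii) gives $\lim_{T\to\infty} E^u_{\tilde\epsilon}(T) = V^+(JA_{\tilde\epsilon}(\infty))$, and then propagate the bound back to $\epsilon = 0$ using the homotopy invariance of the Maslov index on a rectangle in $(\tau,\epsilon)$-space.

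First I would fix $\tilde\epsilon \in (0, \hat\epsilon]$ and combine Lemma \ref{lem:postive maslov = sf} with Lemma \ref{lem:maslov remain constant} to obtain, for $T$ sufficiently large,
\[
\iCLM(\Lambda_R, E^u_{\tilde\epsilon}(\tau), \tau \in (-\infty, T]) = \iCLM(E^s_{\tilde\epsilon}(T), E^u_{\tilde\epsilon}(\tau), \tau \in (-\infty, T]) = \spfl{\mathcal{F}_\lambda, \lambda \in [0, \hat\lambda]}.
\]
By condition (H2), every $\tau$-crossing of $E^u_{\tilde\epsilon}(\tau)$ with $\Lambda_R$ contributes positively to the Maslov index, so the first index is non-decreasing in $T$, while the second is constant on $[T_1, \infty)$ by Lemma \ref{lem:maslov remain constant}. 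Consequently the difference $\Delta(T)$ between the two is non-decreasing in $T$ and vanishes for $T \geq \hat T_{\tilde\epsilon}$, which forces $\Delta(T) \leq 0$, i.e., $\iCLM(\Lambda_R, E^u_{\tilde\epsilon}(\tau), \tau \in (-\infty, T]) \leq \spfl{\mathcal{F}_\lambda, \lambda \in [0, \hat\lambda]}$ for all $T \geq T_1$.

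Next I would transfer this bound from $\tilde\epsilon$ to $0$ via homotopy invariance on the rectangle $[0,\tilde\epsilon] \times (-\infty, T]$ carrying the pair $(\Lambda_R, E^u_\epsilon(\tau))$:
\[
\iCLM(\Lambda_R, E^u_0(\tau), \tau) + \iCLM(\Lambda_R, E^u_\epsilon(T), \epsilon \in [0,\tilde\epsilon]) = \iCLM(\Lambda_R, E^u_{\tilde\epsilon}(\tau), \tau) + \iCLM(\Lambda_R, E^u_\epsilon(-\infty), \epsilon \in [0,\tilde\epsilon]).
\]
The $\tau = -\infty$ term vanishes by Lemma \ref{lem:transversal to LR}. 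For $T$ sufficiently large, the $\tau = T$ term also vanishes because $\Lambda_R \cap E^u_\epsilon(T) = \{0\}$ uniformly in $\epsilon \in [0, \tilde\epsilon]$: when $\epsilon > 0$ this follows from Lemma \ref{lem:space convergence}(ii) combined with Lemma \ref{lem:transversal to LR}, and at $\epsilon = 0$ from the asymptotic analysis of $E^u_0(T)$ described below. Thus $\iCLM(\Lambda_R, E^u_0(\tau), \tau \in (-\infty, T]) \leq \spfl{\mathcal{F}_\lambda, \lambda \in [0, \hat\lambda]}$ for such $T$, and since this index is itself non-decreasing in $T$ (again by (H2)), the inequality extends to all $T \geq T_1$.

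The hardest step will be verifying $\Lambda_R \cap E^u_0(T) = \{0\}$ as $T \to \infty$. Because $E^u_0(0) \cap E^s_0(0)$ contains the translational mode $w_0'(0)$, Lemma \ref{lem:space convergence}(ii) cannot be invoked at $\epsilon = 0$; instead $E^u_0(T)$ converges in $\Lag{n}$ to a Lagrangian $L_\infty$ that inherits the asymptotic direction $\tilde{v}$ of $w_0'(T)$, which lies in $V^-(JA_0(\infty))$ rather than $V^+(JA_0(\infty))$. A direct computation from the eigenvalue relation $JA_0(\infty)\tilde v = -\mu \tilde v$ shows that such an eigenvector has the block form $\tilde v = (-\mu QDv, v)^\top$ with $v \neq 0$; since $QD$ preserves the $V^\pm(Q)$ decomposition, the two block components cannot simultaneously lie in $V^+(Q)$ and $V^-(Q)$, and thus $\tilde v \notin \Lambda_R$. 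Combined with $V^+(JA_0(\infty)) \cap \Lambda_R = \{0\}$ from Lemma \ref{lem:transversal to LR}, this yields $L_\infty \cap \Lambda_R = \{0\}$ and the required eventual transversality.
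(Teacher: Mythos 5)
Your first half (the bound at $\epsilon=\tilde\epsilon$ via positivity of the $\tau$-crossing forms under (H2), monotonicity in $T$, and the asymptotic identity coming from Lemmas \ref{lem:postive maslov = sf} and \ref{lem:maslov remain constant}) is essentially the paper's argument, and the idea of transferring the bound to $\epsilon=0$ by homotopy invariance on the rectangle $[0,\tilde\epsilon]\times(-\infty,T]$ is also the paper's. The genuine gap is in your treatment of the $\tau=T$ edge. You claim $\Lambda_R\cap E^u_\epsilon(T)=\{0\}$ for all $\epsilon\in[0,\tilde\epsilon]$ once $T$ is large, but (i) for $\epsilon>0$ the transversality from Lemma \ref{lem:space convergence}(ii) plus Lemma \ref{lem:transversal to LR} only holds for $T$ large \emph{depending on} $\epsilon$, and the required uniformity degenerates exactly as $\epsilon\downarrow 0$, since the limit of $E^u_\epsilon(T)$ changes character there; and (ii) at $\epsilon=0$ your argument does not close. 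Even granting that $E^u_0(T)$ converges to a Lagrangian $L_\infty$ of the form $\operatorname{span}\{\tilde v\}\oplus W_+$ with $\tilde v\in V^-(JA_0(\infty))$ and $W_+\subset V^+(JA_0(\infty))$ (the existence and structure of this limit for the degenerate bundle is itself only asserted, not proved), verifying $\tilde v\notin\Lambda_R$ and $V^+(JA_0(\infty))\cap\Lambda_R=\{0\}$ separately does not yield $L_\infty\cap\Lambda_R=\{0\}$: an intersection could occur along a vector $c\tilde v+w_+$ with $c\neq 0$, $w_+\in W_+$, and nothing in your computation excludes this. So the vanishing of $\iCLM(\Lambda_R,E^u_\epsilon(T),\epsilon\in[0,\tilde\epsilon])$ is not established.

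The paper sidesteps exactly this difficulty: it does \emph{not} try to prove transversality at $\epsilon=0$. Instead it computes the crossing form of the $\epsilon$-path at a possible crossing $\epsilon=0$, obtaining the negative definite form
\begin{align}
 v^\top\Gamma(E^u_\epsilon(T),\Lambda_R;0)v=-\int_{-\infty}^{T}|\phi_{0}(x)|^2\,\d x<0,
\end{align}
then shrinks $\tilde\epsilon$ so that $\epsilon=0$ is the only crossing on $[0,\tilde\epsilon]$, and invokes the endpoint convention in \eqref{eq:compute maslov by cross form} (the left endpoint contributes $\coiMor$ of the crossing form, which is zero here) to conclude $\iCLM(\Lambda_R,E^u_\epsilon(T),\epsilon\in[0,\tilde\epsilon])=0$ even when $\Lambda_R\cap E^u_0(T)\neq\{0\}$. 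If you replace your transversality claim on the $\tau=T$ edge by this crossing-form computation, the rest of your argument goes through and coincides with the paper's proof.
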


\begin{proof}
    For a fixed $T \geq T_1$, we consider the Maslov index of the perturbed unstable subspace $\iCLM(\Lambda_R, E_\epsilon^u(T), \epsilon \in [0, \hat\epsilon])$. If $\epsilon=0$ is a crossing point, then following an argument analogous to the derivation of \eqref{eq:maslov index form <0}, the crossing form satisfies:
    \[
    \Gamma(E^u_\epsilon(T), \Lambda_R; 0)|_{\Lambda_R\cap E^u(T)} \text{ is negative definite}.
    \]
    By the continuity of the Lagrangian paths, we can choose a sufficiently small $\tilde\epsilon \in (0, \hat\epsilon]$ such that $\epsilon=0$ is the unique crossing point for the family $\{E_\epsilon^u(T)\}_{\epsilon \in [0, \tilde\epsilon]}$ relative to $\Lambda_R$. According to the convention for Maslov indices at the boundary of an interval, the negative definiteness at the left endpoint ($\epsilon=0$) implies:
    \begin{align}\label{eq:maslov =0-2}
        \iCLM(\Lambda_R, E_\epsilon^u(T), \epsilon \in [0, \tilde\epsilon]) = 0.
    \end{align}

    We now construct a homotopy of Lagrangian subspaces $\mathcal{H}(\epsilon, \tau) = (\Lambda_R, E^u_\epsilon(\tau))$ on the rectangular domain $(\epsilon, \tau) \in [0, \tilde\epsilon] \times [-\infty, T]$.
    
    By the homotopy invariance of the Maslov index, the sum of the indices along the four edges of the rectangle vanishes:
    \begin{align}\label{eq:homotopy maslov-1}
        \iCLM(\Lambda_R, E^u(\tau), \tau \in (-\infty, T]) &+ \iCLM(\Lambda_R, E^u_\epsilon(T), \epsilon \in [0, \tilde\epsilon]) \nonumber \\
        = \iCLM(\Lambda_R, E_{\tilde\epsilon}^u(\tau), \tau \in (-\infty, T]) &+ \iCLM(\Lambda_R, E^u_\epsilon(-\infty), \epsilon \in [0, \tilde\epsilon]).
    \end{align}
    By Lemma \ref{lem:transversal to LR}, the intersection $\Lambda_R \cap E^u_\epsilon(-\infty)$ is trivial for all $\epsilon \in [0, \tilde\epsilon]$, which implies $$\iCLM(\Lambda_R, E^u_\epsilon(-\infty), \epsilon \in [0, \tilde\epsilon]) = 0.$$ Substituting this and \eqref{eq:maslov =0-2} into \eqref{eq:homotopy maslov-1}, we obtain the identity:
    \begin{align}\label{eq:invariance_under_perturbation}
        \iCLM(\Lambda_R, E^u(\tau), \tau \in (-\infty, T]) = \iCLM(\Lambda_R, E_{\tilde\epsilon}^u(\tau), \tau \in (-\infty, T]).
    \end{align}
    
    Invoking Lemma \ref{lem:maslov remain constant}, the difference between the indices relative to $\Lambda_R$ and the stable subspace $E^s$ is invariant under the perturbation $\tilde\epsilon$:
    \begin{align}\label{eq:difference between maslov indeces}
       \begin{split}
         & \iCLM(\Lambda_R, E^u_0(\tau), \tau \in (-\infty, T]) - \iCLM(E^s_0(T), E^u(\tau), \tau \in (-\infty, T])  \\
        = & \iCLM(\Lambda_R, E_{\tilde\epsilon}^u(\tau), \tau \in (-\infty, T]) - \iCLM(E^s_{\tilde\epsilon}(T), E_{\tilde\epsilon}^u(\tau), \tau \in (-\infty, T]).
       \end{split}
    \end{align}

    To analyze the right-hand side, let $\tau_0$ be a crossing instant where $\Lambda_R \cap E_{\tilde\epsilon}^u(\tau_0) \neq \{0\}$. Under condition (H2) and the choice $\tilde\epsilon \leq \cic{2}$, the crossing form at $\tau_0$ is given by:
    \begin{align}
       \xi^\top \Gamma(E_{\tilde\epsilon}^u(\tau), \Lambda_R; \tau_0) \xi = |D^{1/2}u|^2 + \langle B(\tau_0)v, v \rangle - \tilde\epsilon |v|^2 > 0,
    \end{align}
    where $\xi = (u, v)^\top \in \Lambda_R \cap E_{\tilde\epsilon}^u(\tau_0) \setminus \{0\}$. The positivity of the crossing form ensures that $\iCLM(\Lambda_R, E_{\tilde\epsilon}^u(\tau), \tau \in (-\infty, T])$ is a non-decreasing function of $T$. 
    
    As established in the proof of Lemma \ref{lem:postive maslov = sf}, the following asymptotic identity holds for sufficiently large $T$:
    \begin{align}
        \iCLM(\Lambda_R, E_{\tilde\epsilon}^u(\tau), \tau \in (-\infty, T]) - \iCLM(E^s_{\tilde\epsilon}(T), E_{\tilde\epsilon}^u(\tau), \tau \in (-\infty, T]) = 0.
    \end{align}
    Due to the monotonicity of the index, it follows that for any $T \geq T_1$:
    \[
    \iCLM(\Lambda_R, E_{\tilde\epsilon}^u(\tau), \tau \in (-\infty, T]) \leq \iCLM(E^s_{\tilde\epsilon}(T), E_{\tilde\epsilon}^u(\tau), \tau \in (-\infty, T]).
    \]
    Substituting this inequality into \eqref{eq:difference between maslov indeces}, we obtain:
    \begin{align}
        \iCLM(\Lambda_R, E^u(\tau), \tau \in (-\infty, T]) \leq \iCLM(E^s(T), E^u(\tau), \tau \in (-\infty, T]).
    \end{align}
    Finally, invoking Proposition \ref{pro:maslov=sp}, we conclude that the upper bound is exactly the spectral flow:
    \begin{align}
        \iCLM(\Lambda_R, E^u(\tau), \tau \in (-\infty, T]) \leq \spfl{\mathcal{F}_{\lambda}, \lambda \in [0, \hat{\lambda}]}.
    \end{align}
    The proof is complete.
\end{proof}
\begin{proof}[The proof of Proposition \ref{pro:spectral flow}]
    Set $T_\infty=\max\{T_0,T_1\}$. 
    Follow Lemma \ref{lem:maslov<=sf} and Lemma \ref{lem:maslov>=sf}, 
We have that \[
\iCLM(\Lambda_R, E^u(\tau), \tau \in (-\infty, T]) = \spfl{\mathcal{F}_\lambda, \lambda \in [0, \hat\lambda]}.
\]

    To analyze the Maslov index $\iCLM(\Lambda_R, E^u(\tau), \tau \in (-\infty, T])$, let $\tau_0$ be a crossing, i.e., $E^u(\tau_0) \cap \Lambda_R \neq \{0\}$. Under condition (H2), a direct computation shows that the crossing form satisfies:
\begin{align}
    \xi^\top \Gamma(E^u(\tau), \Lambda_R; \tau_0) \xi = |D^{1/2}u|^2 + \langle B(\tau_0)v, v \rangle > 0,
\end{align}
where $\xi = (u, v)^\top \in \Lambda_R \cap E^u(\tau_0) \setminus \{0\}$. Thus, by \eqref{eq:compute maslov by cross form}, all crossings are positive, and we have:
\begin{align}\label{eq:stable index=maslov index}
    \iCLM(\Lambda_R, E^u(\tau), \tau \in (-\infty, T]) = \sum_{\tau \in \mathbb{R}} \dim( \Lambda_R \cap E^u(\tau) )=i(w_0).
\end{align}
    we complete the proof. 

\end{proof}

\begin{figure}[h]
    \centering
    \begin{tikzpicture}
        \draw[->, thick](0,0)--(0,4)node[above] {$\theta$};
        \draw[->, thick](0,0)--(10,0)node[right]{$\lambda$};
        
        \draw[<-,blue, ultra thick] (-.1,0)--(-.1,3);
        \node[left, font=\footnotesize, text width=2.5cm, align=right] at (-0.2,1.5){$\spfl{\mathcal{F}_0+\theta I; \theta\in[0,\hat{\theta}]}$};
        
        \draw[->,blue, ultra thick] (0,-0.1)--(9,-.1);
        \node[below, font=\footnotesize] at(3,-.5) {$\spfl{\mathcal{F}_\lambda, \lambda \in [0, \lambda_0]} $};
        
        \draw[->,blue, ultra thick] (6.1,0.3)--(6.1,3);
        \node[right, font=\footnotesize] at (6.2,1.5) {$\spfl{\mathcal{F}_{\lambda_1} + \theta I, \theta \in [0, \hat{\theta}]} $};
        
        \draw[<-,blue, ultra thick] (0,3.1)--(6,3.1);
        \node[above, font=\footnotesize] at(3,3.2) {$\spfl{\mathcal{F}_{\lambda} + \hat{\theta} I, \lambda \in [0, \lambda_1]}$};

        \node[below, color=gray] at(8,0.8) {$\spfl{\mathcal{F}_\lambda, \lambda \in [ \lambda_1,\lambda_0]} $};
        \draw[<-,blue, thick] (6,0.1)--(9,.1);

        \draw[dashed, gray] (0,3)--(6,3);
        \draw[dashed, gray] (6,0)--(6,3);
    \end{tikzpicture}
    \caption{The rectangular homotopy path in the $(\lambda, \theta)$-plane used to evaluate the spectral flow near the origin.}
    \label{fig:homotopy 2}
\end{figure}

This Lemma plays a crucial role in analyzing how the zero eigenvalue of the operator $\mathcal{L}$ responds to changes in the matrix $M$.

\begin{lem}\label{lem:sf<=0}
   Let $w_0$ be a standing pulse of \eqref{eq:r.d.eq}. Under condition (H1), there exists $\lambda_0 > 0$ such that $\mathcal{F}_{\lambda}$ is non-degenerate for all $\lambda \in (0, \lambda_0]$. Moreover, if $\int_{-\infty}^{\infty}\inpro{QMw'_0(x)}{w'_0(x)} \d x < 0$, then 
    $\spfl{\mathcal{F}_\lambda, \lambda \in [0, \lambda_0]} \leq -1$, where $z(x)=\begin{pmatrix}
    QD^{-1}w'_0(x)\\
    w'_0(x)
\end{pmatrix}$.
\end{lem}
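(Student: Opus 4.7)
The lemma has two parts: existence of a threshold $\lambda_0 > 0$ making $\mathcal{F}_\lambda$ non-degenerate on $(0, \lambda_0]$, and a spectral flow bound on $[0, \lambda_0]$. I plan to settle the first using the Fredholm/discreteness structure already developed in the paper, then extract the second from a crossing form computation at $\lambda = 0$.

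For the non-degeneracy assertion, I would combine Corollary \ref{cor:fredholm operator} with the characterization $\sigma_{\text{ess}}(\mathcal{L}) = \{\lambda : JA_\lambda(\infty) \text{ has an eigenvalue on } i\R\}$ and Lemma \ref{lem:matrix hyperbolic}, which together force $\sigma_{\text{ess}}(\mathcal{L}) \subset \C^-$. Hence the non-negative eigenvalues of $\mathcal{L}$ are isolated of finite multiplicity; in particular $\lambda = 0$ is isolated, and any $\lambda_0$ strictly below the gap to the next non-negative eigenvalue of $\mathcal{L}$ yields $\ker \mathcal{F}_\lambda = \{0\}$ on $(0, \lambda_0]$.

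For the spectral flow bound, the only crossing point in $[0, \lambda_0]$ is $\lambda = 0$. I would first confirm that the translation mode $z = (QDw''_0, w'_0)^\top$ lies in $\ker \mathcal{F}_0$ by differentiating $Dw''_0 + Q\nabla V(w_0) = 0$ in $x$, which yields $Dw'''_0 + QB(x) w'_0 = 0$ and places the pair $(QDw''_0, w'_0)^\top$ in the Hamiltonian kernel at $\lambda = 0$. Next I would compute $\dot{\mathcal{F}}_\lambda = -\partial_\lambda A_\lambda = \diag(0, QM)$ and pair it with $z$:
\[
\langle \dot{\mathcal{F}}_0 z, z \rangle_{L^2} = \int_{-\infty}^{\infty} \langle QM w'_0(x), w'_0(x)\rangle\, dx < 0
\]
by hypothesis. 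Mirroring the spectral flow convention used in Lemma \ref{lem:sf not change}, where a positive crossing form at the left endpoint produced $\spfl = 0$, this negative direction at the left endpoint contributes $-1$ to the spectral flow, giving $\spfl{\mathcal{F}_\lambda, \lambda \in [0, \lambda_0]} \leq -1$.

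The main obstacle I anticipate is accommodating a possibly higher-dimensional kernel at $\lambda = 0$, where additional directions of the crossing form might a priori carry positive signature and threaten to cancel the $-1$ from the translation mode. I plan to address this via the $+\delta I$ perturbation implicit in the paper's spectral flow convention: eigenvalues tied to positive or null crossing directions start above zero and either stay positive or descend only through higher-order terms, while the first-order negative direction along $z$ produces one guaranteed downward sign change. Hence no positive directions can reduce the magnitude of the spectral flow, and the bound $\spfl \leq -1$ is preserved regardless of the multiplicity structure at $\lambda = 0$.
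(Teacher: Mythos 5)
Your argument is correct, but it reaches the conclusion by a genuinely different route than the paper. The non-degeneracy threshold $\lambda_0$ is obtained exactly as in the paper (essential spectrum of $\mathcal{L}$ in $\C^-$, hence isolated non-negative eigenvalues and a gap above $0$). For the spectral flow bound, however, you work directly at the single crossing $\lambda=0$: you compute $\dot{\mathcal F}_0=\diag(0,QM)$, note that the crossing form is strictly negative along the translation mode $z$, and then combine the left-endpoint convention in \eqref{eq:compute sf by crossing form} (contribution $-\dim E_-(\operatorname{Cr})$ at the initial time) with a first-order analysis of the eigenvalue branches to argue that each kernel direction contributes $0$ or $-1$ and the direction of $z$ contributes exactly $-1$. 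The paper instead avoids confronting a possibly irregular crossing at $\lambda=0$: it introduces the auxiliary family $\mathcal F_\lambda+\theta I$, runs a rectangular homotopy in $(\lambda,\theta)$, restricts to the one-dimensional subspace $W=\operatorname{span}\{z\}$ to obtain a scalar family $f(\lambda,\theta)$, and transfers the sign information back to $\mathcal F_\lambda$ through the counting inequality $\spfl{\mathcal F_{\lambda_1}+\theta I,\theta\in[0,\hat\theta]}=\sum_\theta\dim\ker(\mathcal F_{\lambda_1}+\theta I)\geq\sum_\theta\dim\ker\bigl((\mathcal F_{\lambda_1}+\theta I)|_W\bigr)$, valid because every crossing in the $\theta$-direction is positive. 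Your route is shorter and keeps all the bookkeeping local to $\lambda=0$; its one delicate point is the case of a null crossing direction ($\mu_j'(0)=0$), where you must use that the branch cannot return to zero on $(0,\lambda_0]$ and therefore contributes $0$ or $-1$ but never $+1$. To make that step airtight, state explicitly that $\lambda\mapsto\mathcal F_\lambda$ is an analytic (affine) family of self-adjoint operators with fixed domain, so Kato perturbation theory yields analytic eigenvalue branches whose derivatives at $0$ are the eigenvalues of $P_0\dot{\mathcal F}_0P_0$ restricted to $\ker\mathcal F_0$; the paper's detour through $W$ is precisely the device that buys this robustness without invoking analytic perturbation theory. (A minor remark: your kernel element $(QDw_0'',w_0')^\top$ is the one consistent with the substitution $z=(QD\phi',\phi)^\top$; only the second component enters the crossing form, so this does not affect the computation.)
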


\begin{proof}
We observe that the operator $\mathcal{L}$ has only isolated eigenvalues in the right half-plane $\mathbb{C}^+ \defeq \{z\in\mathbb{C} \mid \text{Re } z \geq 0\}$, which implies that there exists $\lambda_0 > 0$ such that $\sigma_p(\mathcal{L}) \cap (0, \lambda_0] = \emptyset$. Consequently, for all $\lambda \in (0, \lambda_0]$, the associated Hamiltonian operator $\mathcal{F}_\lambda$ for \eqref{eq:linear hamiltonian system} is non-degenerate, while $\lambda=0$ corresponds to a degenerate case.

By the translational invariance of the standing pulse $w_0$, we have $z(x) = \begin{pmatrix} QD^{-1}w'_0 \\ w'_0 \end{pmatrix} \in \ker \mathcal{F}_0$. We define the one-dimensional subspace $W \defeq \text{span}\{z(x)\}$.

To evaluate the spectral flow $\spfl{\mathcal{F}_\lambda, \lambda \in [0, \lambda_0]}$, we observe that for a sufficiently small $\hat{\theta} > 0$, the operator $\mathcal{F}_0 + \theta I$ is non-degenerate for $\theta \in (0, \hat{\theta}]$, which implies $\spfl{\mathcal{F}_0 + \theta I, \theta \in [0, \hat{\theta}]} = 0$.

According to Theorem 4.22 in \cite{robbin1995spectral}, for almost every $\hat{\theta}$, all crossings in the family $\mathcal{F}_{\lambda} + \hat{\theta}I, \lambda \in [0, \lambda_0]$ are regular and isolated. We then choose $\lambda_1 \in (0, \lambda_0]$ sufficiently small such that no crossings occur in the interval $[0, \lambda_1]$, rendering $\mathcal{F}_\lambda + \hat{\theta}I$ non-degenerate for $\lambda \in [0, \lambda_1]$.

Consider the restriction operator $(\mathcal{F}_{\lambda} + \theta I)|_W$. Its representation is given by the scalar function:
\[
f(\lambda,\theta)=\int_{-\infty}^{\infty}\inpro{(\mathcal{F}_{\lambda} + \theta I)z(x)}{z(x)}\d x.
\]

By the homotopy invariance of the spectral flow along the rectangular path shown in Figure \ref{fig:homotopy 2}, and the fact that the operator family is non-degenerate on the boundaries (except at the origin), we obtain the relation:
\begin{align}\label{eq:homotopy invariance <=0}
    &\spfl{\mathcal{F}_\lambda, \lambda \in [0, \lambda_1]} + \spfl{\mathcal{F}_{\lambda_1} + \theta I, \theta \in [0, \hat{\theta}]} \nonumber \\
    =&\spfl{\mathcal{F}_{\lambda} + \hat{\theta} I, \lambda \in [0, \lambda_1]} + \spfl{\mathcal{F}_0 + \theta I; \theta \in [0, \hat{\theta}]}.
\end{align}
Given that the right-hand side terms are zero by our choice of $\hat{\theta}$ and $\lambda_1$, we find:
\begin{align}\label{eq:sf<0-1}
    \spfl{\mathcal{F}_\lambda, \lambda \in [0, \lambda_1]} = -\spfl{\mathcal{F}_{\lambda_1} + \theta I, \theta \in [0, \hat{\theta}]}.
\end{align}
A similar logic applied to the scalar function $f$ yields:
\begin{align}\label{eq:sf<0-2}
    \spfl{f(\lambda,0), \lambda \in [0, \lambda_1]} = -\spfl{f(\lambda_1, \theta), \theta \in [0, \hat{\theta}]}.
\end{align}

Comparing the kernels of the full and restricted operators:
\begin{align}\label{eq:sf<0-3}
    \spfl{\mathcal{F}_{\lambda_1} + \theta I, \theta \in [0, \hat{\theta}]} &= \sum\limits_{\theta\in(0,\hat{\theta})}\dim\ker(\mathcal{F}_{\lambda_1} + \theta I) \nonumber \\ 
    &\geq \sum\limits_{\theta\in(0,\hat{\theta})}\dim\ker(\mathcal{F}_{\lambda_1} + \theta I)|_W \nonumber \\
    &= \spfl{f(\lambda_1, \theta), \theta \in [0, \hat{\theta}]} = -\spfl{f(\lambda, 0), \lambda \in [0, \lambda_1]}.
\end{align}

If $\int_{-\infty}^{\infty}\inpro{QMw'_0(x)}{w'_0(x)} \d x < 0$, we evaluate the crossing form of $f$ at $\lambda=0$. Since $z \in \ker \mathcal{F}_0$, the crossing form simplifies to:
\begin{align}
    \inpro{\operatorname{Cr}[f(0,0)]z}{z} &= \int_{-\infty}^{\infty} \inpro{\left.\frac{\partial}{\partial \lambda}\right|_{\lambda=0}\mathcal{F}_\lambda z}{z} \d x \nonumber \\
    &= \int_{-\infty}^{\infty} \inpro{-QMz}{z} \d x \nonumber \\
    &= -\int_{-\infty}^{\infty} \inpro{QMw'_0(x)}{w'_0(x)} \d x > 0.
\end{align}
A positive crossing form for $f(\lambda, 0)$ as $\lambda$ increases from $0$ implies:
\begin{align}\label{eq:sf<0-4}
    \spfl{f(\lambda,0), \lambda \in [0, \lambda_1]} = 1.
\end{align}
Substituting this into \eqref{eq:sf<0-3} and \eqref{eq:sf<0-1}, we conclude $\spfl{\mathcal{F}_\lambda, \lambda \in [0, \lambda_0]} \leq -1$.
\end{proof}
\begin{proof}[Proof of Theorem \ref{thm:mainly result}]
By property (2) of Proposition \ref{prop:properties of sf}, we can decompose the spectral flow as:
\[ 
\spfl{\mathcal{F}_{\lambda, 0}, \lambda \in [0, \hat{\lambda}]} = \spfl{\mathcal{F}_{\lambda, 0}, \lambda \in [0, \lambda_0]} + \spfl{\mathcal{F}_{\lambda, 0}, \lambda \in [\lambda_0, \hat{\lambda}]},
\]
so by Proposition \ref{pro:spectral flow}, we have that 
\begin{align}\label{eq:sf contain positive eigenvalue}
    \spfl{\mathcal{F}_{\lambda, 0}, \lambda \in [\lambda_0, \hat{\lambda}]}=i(w_0)-\spfl{\mathcal{F}_{\lambda, 0}, \lambda \in [0, \lambda_0]}
\end{align}

 Therefore, 
\begin{align}\label{eq:maslov<=sf at epsilon=0}
    \iCLM(\Lambda_R, E^u(\tau), \tau \in (-\infty, T]) \leq \spfl{\mathcal{F}_{\lambda, 0}, \lambda \in [\lambda_0, \hat{\lambda}]}.
\end{align}
We note that $(QD\phi'(x), \phi(x)) \in \ker(\mathcal{F}_\lambda)$ if and only if the pair $(\phi(x), \lambda)$ satisfies the weighted eigenvalue problem \eqref{eq:Weighted eigenvalue pro.}. Since $\mathcal{F}_\lambda$ is non-degenerate for all $\lambda \in (0, \lambda_0]$, invoking property (4) of Proposition \ref{prop:properties of sf} and \eqref{eq:sf contain positive eigenvalue}, we conclude that:
\begin{align}\label{eq:unstable eigenvalues lower bound}
     i(w_0)-\spfl{\mathcal{F}_{\lambda, 0}, \lambda \in [0, \lambda_0]} \leq N_+(L),
\end{align}
where $N_+(L)$ denotes the number of real, positive eigenvalues of $L$, counted according to their algebraic multiplicities.

By Lemma \ref{lem:sf<=0} and $i(w_0)\geq 0$. 
By this fact and \eqref{eq:unstable eigenvalues lower bound}, we finish  the proof.
\end{proof}

\subsection{Application to FitzHugh-Nagumo type system}\label{section 3}

  In order to investigate the stability of standing pulses of \eqref{eq:f.n.eq.1} and \eqref{eq:f.n.eq.2},
  we set
  \[
  M = \begin{pmatrix}
      1 & 0 \\
      0 & \tau
  \end{pmatrix}, \quad D=
  \begin{pmatrix}
      d & 0 \\
      0 & 1
  \end{pmatrix}, \quad \text{and} \quad 
  Q = \begin{pmatrix}
      1 & 0 \\
      0 & -1
  \end{pmatrix},
  \]
  so that $l = \min\{1, \tau\}$ and $V^-(Q) = \{0\} \oplus \R$.
  
  The equation \eqref{eq:f.n.eq.} can be written in the form of \eqref{eq:r.d.eq} by defining
  \[
  V(u, v) = \frac{1}{2} \gamma v^2 + \frac{1}{4} v^4 - u v - \frac{1}{4} u^4 + \frac{1}{3} (1+\beta) u^3 - \frac{1}{2} \beta u^2.
  \]

  By direct calculation, we have
  \begin{align}
      B(x) = \nabla^2 V(u, v) = 
      \begin{pmatrix}
          f^\prime(u) & -1 \\
          -1 & \gamma + 3 v^2
      \end{pmatrix},
  \end{align}
  so that
  \[
   B(\infty) = 
  \begin{pmatrix}
      -\beta & -1 \\
      -1 & \gamma
  \end{pmatrix}.
  \]
  We observe that $\gamma+3 v^2>0$ and $V^-(Q) = \{0\} \oplus \R$,
  Set $C_1 = \{\max\limits_{x \in \R} |f^\prime(u)|,\gamma+3\max\limits_{x \in \R} v^2)\}$, $\cic{2}=\min\{\beta,\gamma\}$ and $\cic{3}=\gamma$, a direct computation shows that 
  \begin{align}
    &\inpro{QB(x)\xi}{\xi}\leq C_1|\xi|^2,\ \text{for all } (x, \xi) \in \R \times \R^2.\\
    &\inpro{QB(\infty)\xi}{\xi}\leq -C_2|\xi|^2, \text{for all } \xi \in  \R^2.\\
    &\inpro{B(x)\xi}{\xi}\geq C_3|\xi|^2,\ \text{for all } \xi \in \{0\}\oplus \R.
  \end{align}
  hence the conditions (H1) and (H2) are readily verified.
  \begin{proof}[Proof of Theorem \ref{thm:unstable by solution}]
    Let $\phi(x) = (u'(x), v'(x))^\top$. Since the reaction-diffusion system \eqref{eq:f.n.eq.1}--\eqref{eq:f.n.eq.2} is autonomous, the pair $(\phi(x), 0)$ satisfies the weighted eigenvalue problem \eqref{eq:Weighted eigenvalue pro.} for $\lambda = 0$. Consequently, the vector-valued function $z(x) = (QD\phi'(x), \phi(x))^\top$ is a solution to the linear Hamiltonian system \eqref{eq:linear hamiltonian system}. By the definition of the unstable subspace, we have $z(\tau) \in E^u(\tau)$ for any $\tau \in \mathbb{R}$.

    At the specific point $\tau = \tau_0$, the condition $u'(\tau_0) = v''(\tau_0) = 0$ implies 
    \[
    z(\tau_0) \in \Lambda_R \cap E^u(\tau_0) \setminus \{0\},
    \]
    which demonstrates that $\Lambda_R \cap E^u(\tau_0) \neq \{0\}$. This non-trivial intersection implies that $\tau_0$ is a crossing, and therefore the stability index satisfies $i(w_0) > 0$. Applying Theorem \ref{thm:mainly result}, we conclude that the standing pulse is unstable. This completes the proof.
\end{proof}

 \begin{proof}[Proof of Theorem \ref{thm:unstablity condition for example}]
Define the critical threshold $\tau_0 \defeq \frac{\int_{-\infty}^{\infty}|u'(x)|^2\d x}{\int_{-\infty}^{\infty}|v'(x)|^2\d x}$.

For $\tau > \tau_0$,
A direct computation yields that
\begin{align}
    \int_{-\infty}^{\infty}\inpro{ QMw'_0(x)}{w'_0(x)}\d x= \int_{-\infty}^{\infty}|u'(x)|^2\d x-\tau\int_{-\infty}^{\infty}|v'(x)|^2\d x<0,
\end{align}
by Theorem \ref{thm:mainly result}, we finish the proof.
\end{proof}

\begin{proof}[Proof of Theorem \ref{thm:stablity condition for example}]
Let $w_0 = (u, v)^\top = (u, \mathcal{N}(u))^\top$ be the standing pulse solution. Recall that the linearized operator is $\mathcal{L} = M^{-\frac{1}{2}} (D \frac{\d^2}{\d x^2} - Q B(x)) M^{-\frac{1}{2}}$. A direct calculation shows that the operator $G \defeq -Q \mathcal{L}$ takes the form:
\[
G = \begin{pmatrix}
-d \frac{\d^2}{\d x^2} - f^\prime(u) & \tau^{-\frac{1}{2}} \\
\tau^{-\frac{1}{2}} & \tau^{-1} \frac{\d^2}{\d x^2} - \tau^{-1} \gamma - 3\tau^{-1} v^2
\end{pmatrix}.
\]
Identifying the components, we have $G_2 = \tau^{-1} \frac{\d^2}{\d x^2} - \tau^{-1} \gamma - 3\tau^{-1} v^2$ and the coupling term $G_3 = G_3^* = \tau^{-\frac{1}{2}}$. 

Under the condition $\tau < \gamma^2$, the assumptions of Lemma \ref{lem:all real eigenvalue} are satisfied. Combining this with the equivalence established in Lemma \ref{lem:stabilty=unstable eigenvalue}, the result follows immediately.
\end{proof}

\section{Appendix}
This appendix provides the necessary mathematical foundation for the index theory and spectral analysis used throughout this paper. We begin with a detailed examination of the spectral structure of non-self-adjoint operators, followed by a formal review of Maslov index theory and spectral flow.
\subsection{Spectral structure of the operator $\mathcal{L}$}

By analyzing the eigenvalue distribution of \( \mathcal{L} \), the stability results obtained in this section are applicable not only to FitzHugh-Nagumo type equations, but also to more general skew-gradient systems. Let \( Q^{+} \) and \( Q^{-} \) denote the orthogonal projections from \( E \) onto \( E_{+}(Q) \) and \( E_{-}(Q) \), respectively. Define 
\[
G = -Q \mathcal{L}, \quad G_1 = Q^{+} G Q^{+}, \quad G_2 = Q^{-} G Q^{-}, \quad G_3 = Q^{+} G Q^{-},
\]
that is, \( G \) is decomposed into the form:
\begin{align*}
G = \begin{pmatrix}
G_1 & G_3 \\
G_3^* & G_2
\end{pmatrix},
\end{align*}
where \( G_3^* = \bar{G}_3^T \), and \( \bar{G}_3 \) denotes the complex conjugate of \( G_3 \).

For a self-adjoint linear operator \( A \) defined on \( E \), we write \( A > 0 \) if \( \langle A \psi, \psi \rangle > 0 \) for all \( \psi \in E \setminus \{0\} \). We write \( A > \tilde{A} \) if \( A - \tilde{A} > 0 \).

\begin{lem}\label{lem:all real eigenvalue}\cite[Lemma 4.1]{CH14}
   Suppose \( -G_2 > 0 \) and \( I > G_3(-G_2)^{-2} G_3^* \), then \( \sigma(\mathcal{L}) \cap \overline{\mathbb{C}}^{+} \subset \mathbb{R} \).
\end{lem}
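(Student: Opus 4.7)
The guiding idea is to exploit the fact that $G=-Q\mathcal{L}$ is self-adjoint (which follows from the skew-gradient structure: $Q$ and $M^{-1/2}$ are diagonal, so $-QDd^2/dx^2$ is self-adjoint under the natural $L^2$-inner product on $E$, and $-B(x)$ is self-adjoint by construction). If $\lambda\in\sigma(\mathcal{L})\cap\overline{\C}^+$ with eigenfunction $\psi=(\psi_1,\psi_2)^\top$, then $\mathcal{L}\psi=\lambda\psi$ is equivalent to
\[
G\psi=-\lambda Q\psi,
\]
so that $\inpro{G\psi}{\psi}=-\lambda\inpro{Q\psi}{\psi}$. Because $G$ and $Q$ are both self-adjoint, both inner products are real; hence $\lambda$ must be real whenever $\inpro{Q\psi}{\psi}=\|\psi_1\|^2-\|\psi_2\|^2\neq 0$. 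The entire proof therefore reduces to establishing the strict inequality $\|\psi_2\|<\|\psi_1\|$.

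The plan is to carry this out through the block equations
\[
G_1\psi_1+G_3\psi_2=-\lambda\psi_1,\qquad G_3^*\psi_1+G_2\psi_2=\lambda\psi_2.
\]
First, I would dispose of the degenerate case $\psi_1=0$: the second equation then forces $G_2\psi_2=\lambda\psi_2$ with $\psi_2\neq 0$, but $\inpro{G_2\psi_2}{\psi_2}<0$ by $-G_2>0$, while $\lambda\|\psi_2\|^2$ has non-negative real part, a contradiction. Thus $\psi_1\neq 0$. Next, since $G_2$ is self-adjoint with spectrum in $(-\infty,0)$ and $\Real\lambda\geq 0$, the operator $G_2-\lambda$ is invertible, and from the second block equation
\[
\psi_2=-(G_2-\lambda)^{-1}G_3^*\psi_1.
\]

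The heart of the argument is then a spectral-theorem estimate comparing $(G_2-\lambda)^{-1}$ with $(-G_2)^{-1}$. Writing $G_2=\int\mu\,dE(\mu)$ with support in $(-\infty,0)$, one checks that for every $\mu<0$ and every $\lambda$ with $\Real\lambda\geq 0$,
\[
|\mu-\lambda|^2=(\mu-\Real\lambda)^2+(\ima\lambda)^2\geq \mu^2,
\]
so that $|\mu-\lambda|^{-2}\leq\mu^{-2}$ pointwise. Applying this inside the spectral integral to the vector $G_3^*\psi_1$ gives
\[
\|\psi_2\|^2=\|(G_2-\lambda)^{-1}G_3^*\psi_1\|^2\leq\|(-G_2)^{-1}G_3^*\psi_1\|^2=\inpro{G_3(-G_2)^{-2}G_3^*\psi_1}{\psi_1}.
\]
The hypothesis $I>G_3(-G_2)^{-2}G_3^*$ now yields the strict inequality $\|\psi_2\|^2<\|\psi_1\|^2$, hence $\inpro{Q\psi}{\psi}>0$, and the identity $\lambda=-\inpro{G\psi}{\psi}/\inpro{Q\psi}{\psi}$ concludes that $\lambda\in\R$.

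The main obstacle I expect is technical rather than conceptual: justifying the spectral-calculus step rigorously in the unbounded, non-self-adjoint functional setting at hand. Specifically, one must check that $\psi_1\in\dom(G_3^*)$ and that $(G_2-\lambda)^{-1}$ has the expected action on $\range G_3^*$, together with a careful treatment of the strictness in the operator inequality when $G_2$ may not be bounded away from $0$ (so that $|\mu-\lambda|^{-2}<\mu^{-2}$ only on a set of positive spectral measure, not uniformly). These points are standard but require a clean domain analysis of the block decomposition of $G$; once they are in place, the three-line argument above delivers the conclusion.
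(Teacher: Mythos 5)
The paper does not prove this lemma itself but cites it from \cite[Lemma 4.1]{CH14}, and your argument --- reducing reality of $\lambda$ to $\inpro{Q\psi}{\psi}\neq 0$ via self-adjointness of $G=-Q\mathcal{L}$, solving the second block equation for $\psi_2=-(G_2-\lambda)^{-1}G_3^*\psi_1$, and comparing $(G_2-\lambda)^{-1}$ with $(-G_2)^{-1}$ through the spectral theorem using $|\mu-\lambda|\geq|\mu|$ for $\mu<0\leq\Real\lambda$ --- is essentially the proof given there, and it is correct. The technical worries you raise at the end are largely moot: the hypothesis $I>G_3(-G_2)^{-2}G_3^*$ already presupposes that $(-G_2)^{-1}$ is a bounded operator (hence $\sigma(G_2)$ is bounded away from $0$ and $G_2-\lambda$ is invertible for all $\Real\lambda\geq 0$), and the needed strictness comes not from the resolvent comparison (where $\leq$ suffices) but from applying the strict operator inequality to the nonzero vector $\psi_1$.
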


\begin{lem}\label{lem:stabilty=unstable eigenvalue}
    Let \( w_0 \) be a standing pulse of \eqref{eq:r.d.eq}. If \( -G_2 > 0 \) and \( I > G_3(-G_2)^{-2} G_3^* \), then \( i(w_0) = N_{+}(\mathcal{L}) \).
\end{lem}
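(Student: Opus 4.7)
My plan is to prove $i(w_0) = N_+(\mathcal{L})$ by decomposing the spectral flow identity $i(w_0) = \spfl{\mathcal{F}_\lambda, \lambda \in [0, \hat{\lambda}]}$ from Proposition \ref{pro:spectral flow} and showing that, under the structural hypotheses $-G_2 > 0$ and $I > G_3(-G_2)^{-2} G_3^*$, each positive real eigenvalue of $\mathcal{L}$ contributes exactly its algebraic multiplicity to the spectral flow, while the boundary contribution at $\lambda = 0$ vanishes.

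First, I would invoke Lemma \ref{lem:all real eigenvalue} to ensure $\sigma(\mathcal{L}) \cap \overline{\C}^+ \subset \R$, so that $N_+(\mathcal{L})$ coincides with the total algebraic multiplicity of positive real eigenvalues and no complex eigenvalues lie in $\C^+$. Together with Corollary \ref{cor:nondegenerate operator} and the choice of $\hat{\lambda}$, this ensures that the crossings of the family $\{\mathcal{F}_\lambda\}_{\lambda \in [0, \hat{\lambda}]}$ occur precisely at $\lambda = 0$ (from the translational kernel) and at the finitely many positive real eigenvalues of $\mathcal{L}$ lying in $(\lambda_0, \hat{\lambda}]$, where $\lambda_0$ is chosen as in Lemma \ref{lem:sf<=0} so that $\mathcal{F}_\lambda$ is non-degenerate on $(0, \lambda_0]$.

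Next, I would evaluate the crossing form at each crossing $\lambda^* \in (\lambda_0, \hat{\lambda}]$. For $z = (QD\phi', \phi)^\top \in \ker \mathcal{F}_{\lambda^*}$, the crossing form reduces (up to the standard sign convention) to $\int \langle QM\phi, \phi\rangle\, dx$. Writing $\phi = (\phi_+, \phi_-)^\top$ with $\phi_\pm$ in the ranges of $Q^{\pm}$, the block structure of $G = -Q\mathcal{L}$ together with the eigenvalue equation $G\phi = -\lambda^* Q\phi$ and the positivity of the Schur complement $S = G_1 + G_3(-G_2)^{-1}G_3^*$ (ensured by the hypotheses, in the same spirit as Lemma \ref{lem:all real eigenvalue}) forces $\phi_-$ to be uniquely determined by $\phi_+$ and allows one to rewrite the crossing form as a definite quadratic form on $\ker \mathcal{F}_{\lambda^*}$. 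The same Schur-complement manipulation shows that the geometric and algebraic multiplicities of $\lambda^*$ coincide. Summing over all positive real eigenvalues then yields $\spfl{\mathcal{F}_\lambda, \lambda \in [\lambda_0, \hat{\lambda}]} = N_+(\mathcal{L})$. The contribution on $[0, \lambda_0]$ arises solely from the boundary crossing at $\lambda = 0$ on the one-dimensional translational kernel; by the endpoint convention for spectral flow combined with the non-degeneracy of $\mathcal{F}_\lambda$ for $\lambda \in (0, \lambda_0]$, this contribution vanishes.

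The main obstacle, in my view, is the definiteness analysis of the crossing form at each positive eigenvalue. While reality of the spectrum is guaranteed by Lemma \ref{lem:all real eigenvalue}, translating the operator inequality $I > G_3(-G_2)^{-2}G_3^*$ into a uniform sign statement about $\int \langle QM\phi, \phi\rangle\, dx$ on all such kernels requires a careful manipulation of the coupled eigenvalue system so as to extract an effectively self-adjoint reduced problem whose positive Morse index coincides with $N_+(\mathcal{L})$. A secondary technical point is the correct handling of the boundary crossing at $\lambda = 0$ within the sign conventions of the spectral flow, ensuring that the translational mode does not spuriously contribute to the final count.
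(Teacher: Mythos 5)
Your proposal follows essentially the same route as the paper: invoke Proposition \ref{pro:spectral flow} to identify $i(w_0)$ with the spectral flow, use Lemma \ref{lem:all real eigenvalue} for reality of the spectrum in $\overline{\C}^+$, eliminate $\phi_-$ via $M^{1/2}\phi_-=(\lambda I-G_2)^{-1}G_3^*M^{1/2}\phi_+$, and show the crossing form $\|M^{1/2}\phi_+\|^2-\|M^{1/2}\phi_-\|^2$ is positive from $I>G_3(-G_2)^{-2}G_3^*\geq G_3(\lambda I-G_2)^{-2}G_3^*$, so that every crossing contributes positively and the flow counts the positive eigenvalues. The only minor imprecision is your appeal to positivity of the Schur complement $G_1+G_3(-G_2)^{-1}G_3^*$, which is neither hypothesized nor needed --- invertibility of $\lambda I-G_2$ (from $-G_2>0$ and $\lambda\geq 0$) together with the stated operator inequality is exactly what the paper uses.
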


\begin{proof}
     Recall from Proposition \ref{pro:spectral flow} and \eqref{eq:stable index=maslov index} that 
    \[
       i(w_0) = \spfl{\mathcal{F}_\lambda; \lambda \in \left[0, \hat\lambda \right]}.
    \]
    Suppose along the spectral flow there is an eigenvalue crossing at \( \mathcal{F}_\lambda \) for some \( \lambda \in \left[\lambda_0, \hat\lambda \right] \), and let \( y = (QD \phi', \phi)^\top \in \ker(\mathcal{F}_\lambda) \). It is easy to see that \( (\lambda, \phi) \) satisfies \eqref{eq:Weighted eigenvalue pro.}. Letting \( \phi_+ = Q^+ \phi \) and \( \phi_- = Q^- \phi \), we rewrite \eqref{eq:Weighted eigenvalue pro.} as
    \begin{subequations}
        \begin{align}
        	G_1 M^{1/2} \phi_+ + G_3 M^{1/2} \phi_- &= -\lambda M^{1/2} \phi_+ \label{eq:positive cross1}, \\
        	G_3^* M^{1/2} \phi_+ + G_2 M^{1/2} \phi_- &= \lambda M^{1/2} \phi_- \label{eq:positive cross2}.
        \end{align}
    \end{subequations}
    
    Solving \eqref{eq:positive cross2}, we get
    \[
    M^{1/2} \phi_- = (\lambda I - G_2)^{-1} G_3^* M^{1/2} \phi_+.
    \]
    Substituting this into \eqref{eq:positive cross1}, we obtain
    \begin{align}
    	\left\langle \frac{\d}{\d\lambda} A_\lambda(x) y, y \right\rangle_{L^2} 
    	&= \| M^{1/2} \phi_+ \|_{L^2}^2 - \| M^{1/2} \phi_- \|_{L^2}^2 \\
    	&= \langle M^{1/2} \phi_+, M^{1/2} \phi_+ \rangle_{L^2} - \langle G_3(\lambda I - G_2)^{-2} G_3^* M^{1/2} \phi_+, M^{1/2} \phi_+ \rangle_{L^2}.
    \end{align}

    Note that
    \[
    I > G_3(-G_2)^{-2} G_3^* \geq G_3(\lambda I - G_2)^{-2} G_3^*,
    \]
    for \( \lambda \geq 0 \). This implies that the sign of the crossing form is positive whenever a crossing occurs at \( \lambda \in \left[0, \hat\lambda \right] \). 

    By \eqref{eq:compute maslov by cross form}, we conclude from Lemma \ref{lem:all real eigenvalue} that
    \[
    \spfl{\mathcal{F}_\lambda; \lambda\in \left[0, \hat\lambda \right]}= \sum_{\lambda > 0} \dim E_0(\mathcal{F}_\lambda).
    \]

    This completes the proof.
\end{proof}
  \subsection{Maslov, Hörmander, Triple Index and Spectral Flow}\label{section 4}
  This final section is dedicated to recalling fundamental definitions, key results, and essential properties of the Maslov index and related invariants used throughout our analysis. Primary references include \cite{RS93,HP17,ZWZ18} and their cited works.
  
  \subsubsection{The Cappell-Lee-Miller Index}\label{sec:maslov}
  Consider the standard symplectic space $(\mathbb{R}^{2n}, \omega)$. Let $\Lag{n}$ denote the Lagrangian Grassmannian of $(\mathbb{R}^{2n}, \omega)$. For $a, b \in \mathbb{R}$ with $a < b$, define $\mathscr{P}([a, b]; \mathbb{R}^{2n})$ as the space of continuous Lagrangian pairs $L: [a, b] \to \Lag{n} \times \Lag{n}$ with compact-open topology. Following \cite{CLM94}, we recall the Maslov index for Lagrangian pairs, denoted by $\iCLM$. Intuitively, for $L = (L_1, L_2) \in \mathscr{P}([a, b]; \mathbb{R}^{2n})$, this index enumerates (with signs and multiplicities) instances $t \in [a, b]$ where $L_1(t) \cap L_2(t) \neq \{0\}$.
  
  \begin{defn}
  The $\iCLM$-index is the unique integer-valued function
  \begin{align*}
  \iCLM: \mathscr{P}([a, b]; \mathbb{R}^{2n}) \ni L \mapsto \iCLM(L(t); t \in [a, b]) \in \mathbb{Z}
  \end{align*}
  satisfying Properties I-VI in \cite[Section 1]{CLM94}.
  \end{defn}
  
  An effective approach to compute the Maslov index employs the crossing form introduced in \cite{RS93}. Let $\Lambda: [0,1] \to \Lag{n}$ be a smooth curve with $\Lambda(0) = \Lambda_0$, and $W$ a fixed Lagrangian complement of $\Lambda(t)$. For $v \in \Lambda_0$ and small $t$, define $w(t) \in W$ via $v + w(t) \in \Lambda(t)$. The quadratic form $Q(v) = \left.\frac{d}{dt}\right|_{t=0} \omega(v, w(t))$ is independent of $W$ \cite{RS93}. A crossing occurs at $t$ where $\Lambda(t)$ intersects $V \in \Lag{n}$ nontrivially. The crossing form at such $t$ is defined as
  \begin{align}\label{eq:def of crossing form in Maslov}
  \Gamma(\Lambda(t), V; t) = \left.Q\right|_{\Lambda(t) \cap V}.
  \end{align}
  
  A crossing is regular if its form is nondegenerate. For quadratic form $Q$, let $\sign(Q) = \coiMor(Q) - \iMor(Q)$ denote its signature. From \cite{ZL99}, if $\Lambda(t)$ has only regular crossings with $V$, then
  \begin{align}\label{eq:compute maslov by cross form}
  \begin{split}
	  \iCLM(V, \Lambda(t); t \in [a,b]) = &\coiMor(\Gamma(\Lambda(a), V; a)) \\
  &+ \sum_{a<t<b} \sign \Gamma(\Lambda(t), V; t) - \iMor(\Gamma(\Lambda(b), V; b)).
  \end{split}
  \end{align}
  For the sake of the reader, we list a couple of properties of the $\iCLM$-index that we shall use throughout the paper.
  \begin{itemize}
	  \item \textbf{(Reversal)} Let $L:=\left(L_1, L_2\right) \in \mathscr{P}\left([a, b] ; \mathbb{R}^{2 n}\right)$. Denoting by $\widehat{L} \in$ $\mathscr{P}\left([-b,-a] ; \mathbb{R}^{2 n}\right)$ the path traveled in the opposite direction, and by setting $\widehat{L}:=\left(L_1(-s), L_2(-s)\right)$, we obtain
  \begin{align*}
	  \iCLM(\widehat{L} ;[-b,-a])=-\iCLM(L ;[a, b])
	  \end{align*}
	  \item \textbf{(Stratum homotopy relative to the ends)} Given a continuous map
	  $L:[a, b] \ni s \rightarrow L(s) \in \mathscr{P}\left([a, b] ; \mathbb{R}^{2 n}\right)$ where $L(s)(t):=\left(L_1(s, t), L_2(s, t)\right)$
	  such that $\dim L_1(s, a) \cap L_2(s, a)$ and $\dim L_1(s, b) \cap L_2(s, b)$ are both constant, and then,	
  \begin{align*}
	  \iCLM(L(0) ;[a, b])=\iCLM(L(1) ;[a, b])
	  \end{align*}
  \end{itemize}

  \subsubsection{Triple Index and Hörmander Index}
  We summarize key concepts about the triple and Hörmander indices, following \cite{ZWZ18}. For isotropic subspaces $\alpha, \beta, \delta$ in $(\mathbb{R}^{2n}, \omega)$, define the quadratic form
  \begin{align}\label{eq:definition q}
  \mathscr{Q} \coloneqq \mathscr{Q}(\alpha, \beta; \delta): \alpha \cap (\beta + \delta) \to \mathbb{R},\quad \mathscr{Q}(x_1, x_2) = \omega(y_1, z_2)
  \end{align}
  where $x_j = y_j + z_j \in \alpha \cap (\beta + \delta)$ with $y_j \in \beta$, $z_j \in \delta$. For Lagrangian subspaces $\alpha, \beta, \delta$, \cite[Lemma 3.3]{ZWZ18} gives
  \begin{align*}
  \ker \mathscr{Q}(\alpha, \beta; \delta) = \alpha \cap \beta + \alpha \cap \delta.
  \end{align*}
  
  \begin{defn}
  For Lagrangians $\alpha, \beta, \kappa$ in $(\mathbb{R}^{2n}, \omega)$, the triple index is
  \begin{align}\label{eq:def of trip 1}
  \iota(\alpha, \beta, \kappa) = \iMor(\mathscr{Q}(\alpha, \delta; \beta)) + \iMor(\mathscr{Q}(\beta, \delta; \kappa)) - \iMor(\mathscr{Q}(\alpha, \delta; \kappa))
  \end{align}
  where $\delta$ satisfies $\delta \cap \alpha = \delta \cap \beta = \delta \cap \kappa = \{0\}$.
  \end{defn}
  
  By \cite[Lemma 3.13]{ZWZ18}, this index also satisfies
  \begin{align}\label{eq:def of trip 2}
  \iota(\alpha, \beta, \kappa) = \iMor(\mathscr{Q}(\alpha, \beta; \kappa)) + \dim(\alpha \cap \kappa) - \dim(\alpha \cap \beta \cap \kappa).
  \end{align}
  
  The Hörmander index measures the difference between Maslov indices relative to different Lagrangians. For paths $\Lambda, V \in \mathscr{C}^0([0,1], \Lag{n})$ with endpoints $\Lambda(0)=\Lambda_0$, $\Lambda(1)=\Lambda_1$, $V(0)=V_0$, $V(1)=V_1$:
  
  \begin{defn}\label{def:hormand}
  The Hörmander index is
  \begin{align}\label{eq:def of Hormand}
  s(\Lambda_0, \Lambda_1; V_0, V_1) &= \iCLM(V_1, \Lambda(t); t \in [0,1]) - \iCLM(V_0, \Lambda(t); t \in [0,1]) \\
  &= \iCLM(V(t), \Lambda_1; t \in [0,1]) - \iCLM(V(t), \Lambda_0; t \in [0,1]).
  \end{align}
  \end{defn}
  
  \begin{rem}
  Homotopy invariance ensures Definition \ref{def:hormand} is well-posed (cf. \cite{RS93}).
  \end{rem}
  
  For four Lagrangians $\lambda_1, \lambda_2, \kappa_1, \kappa_2$, \cite[Theorem 1.1]{ZWZ18} establishes:
  \begin{align}\label{eq:compute Hormand by trip}
  s(\lambda_1, \lambda_2; \kappa_1, \kappa_2) = \iota(\lambda_1, \lambda_2, \kappa_2) - \iota(\lambda_1, \lambda_2, \kappa_1) = \iota(\lambda_1, \kappa_1, \kappa_2) - \iota(\lambda_2, \kappa_1, \kappa_2).
  \end{align}
  \begin{lem}\cite[Lemma A.6]{HPWX20}\label{lem:compute maslov by triple index}
	Let $\Lambda_1$ and $\Lambda_2$ be two continuous paths in $\Lag{n}$ with $t \in[0,1]$ and we assume that $\Lambda_1(t)$ and $\Lambda_2(t)$ are both transversal to the (fixed) Lagrangian subspace $\Lambda$. Then we get
  \begin{align*}
  \iCLM\left(\Lambda_1(t), \Lambda_2(t) ; t \in[0,1]\right)=\iota\left(\Lambda_2(1), \Lambda_1(1) ; \Lambda\right)-\iota\left(\Lambda_2(0), \Lambda_1(0) ; \Lambda\right)
  \end{align*}
  \end{lem}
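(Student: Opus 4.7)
The plan is to exploit the hypothesis that both paths $\Lambda_1(t)$ and $\Lambda_2(t)$ remain transversal to $\Lambda$ throughout, which gives the vanishing indices $\iCLM(\Lambda_1(t), \Lambda; t\in[0,1]) = \iCLM(\Lambda, \Lambda_2(t); t\in[0,1]) = 0$ and allows $\Lambda$ to serve as a common reference Lagrangian against which both moving paths can be compared. My approach is to reduce the Maslov index of the coupled diagonal path to endpoint triple-index data in three stages: a rectangular homotopy that splits the diagonal motion into two single-slot motions, a conversion of each single-slot index into a H\"ormander index referenced against $\Lambda$, and a cocycle collapse that simplifies the resulting alternating sum of triple indices into the claimed two-term endpoint expression.

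Concretely, the first step is to consider the two-parameter family $F(s,t) \defeq (\Lambda_1(s), \Lambda_2(t))$ on the square $[0,1]^2$. By the stratum homotopy property of the $\iCLM$ index (listed explicitly after the display \eqref{eq:compute maslov by cross form}), the diagonal path $s=t$ from $(0,0)$ to $(1,1)$ is homotopic rel corners to the L-shaped concatenation obtained by first varying $s$ from $0$ to $1$ with $\Lambda_2$ held at $\Lambda_2(0)$, and then varying $t$ from $0$ to $1$ with $\Lambda_1$ held at $\Lambda_1(1)$. This yields
\begin{align*}
\iCLM(\Lambda_1(t), \Lambda_2(t); [0,1]) = \iCLM(\Lambda_1(s), \Lambda_2(0); [0,1]) + \iCLM(\Lambda_1(1), \Lambda_2(t); [0,1]).
\end{align*}
Next, each summand now involves only a single moving Lagrangian against a fixed one. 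Applying Definition \ref{def:hormand} with $V(t) = \Lambda_1(t)$ in the first summand and with the varying second slot in the form $\Lambda(t) = \Lambda_2(t)$ in the second, together with the hypothesis-induced vanishing of the auxiliary indices against $\Lambda$, identifies the two summands with the H\"ormander indices $s(\Lambda, \Lambda_2(0); \Lambda_1(0), \Lambda_1(1))$ and $s(\Lambda_2(0), \Lambda_2(1); \Lambda, \Lambda_1(1))$ respectively. Expanding each via \eqref{eq:compute Hormand by trip} then converts the total into an alternating sum of four triple indices involving only $\Lambda$ and the four endpoint Lagrangians $\Lambda_i(0), \Lambda_i(1)$.

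The final step is to collapse this alternating sum into the target expression $\iota(\Lambda_2(1), \Lambda_1(1); \Lambda) - \iota(\Lambda_2(0), \Lambda_1(0); \Lambda)$. For this I would invoke the cocycle identity
\begin{align*}
\iota(\alpha, \beta, \gamma) - \iota(\alpha, \beta, \delta) + \iota(\alpha, \gamma, \delta) - \iota(\beta, \gamma, \delta) = 0,
\end{align*}
which is a direct consequence of the equality of the two right-hand sides in \eqref{eq:compute Hormand by trip}, applied twice with $(\alpha,\beta,\gamma,\delta)$ chosen as $(\Lambda_2(0), \Lambda_2(1), \Lambda_1(1), \Lambda)$ and $(\Lambda, \Lambda_2(0), \Lambda_1(0), \Lambda_1(1))$ respectively. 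The transversality hypothesis simplifies \eqref{eq:def of trip 2} considerably, since the intersection-dimension correction terms $\dim(\Lambda \cap \Lambda_i(t))$ and $\dim(\Lambda \cap \Lambda_1(t) \cap \Lambda_2(t'))$ all vanish, and this is used to reconcile the various orderings of arguments. The principal obstacle is precisely this algebraic bookkeeping: the triple index $\iota(\cdot,\cdot,\cdot)$ is not symmetric in its three arguments, so the choice of which of the two forms of \eqref{eq:compute Hormand by trip} to apply at the previous step and which permutation of $(\alpha,\beta,\gamma,\delta)$ to feed into the cocycle identity must be aligned in exactly the right way so that the stray intermediate triple indices such as $\iota(\Lambda_2(0), \Lambda_1(1), \Lambda)$ cancel and the claimed endpoint-only expression emerges in the stated orientation.
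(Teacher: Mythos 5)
Your strategy is sound, and there is nothing internal to compare it against: the paper does not prove this lemma but quotes it verbatim from \cite[Lemma A.6]{HPWX20}. Steps 1 and 2 of your plan are correct as stated: the rectangular homotopy with fixed corners plus concatenation additivity of $\iCLM$ gives the split into $\iCLM(\Lambda_1(s),\Lambda_2(0);s\in[0,1])$ and $\iCLM(\Lambda_1(1),\Lambda_2(t);t\in[0,1])$, and since transversality to $\Lambda$ forces $\iCLM(\Lambda_1(s),\Lambda;s\in[0,1])=\iCLM(\Lambda,\Lambda_2(t);t\in[0,1])=0$, Definition~\ref{def:hormand} identifies the two pieces with $s(\Lambda,\Lambda_2(0);\Lambda_1(0),\Lambda_1(1))$ and $s(\Lambda_2(0),\Lambda_2(1);\Lambda,\Lambda_1(1))$ exactly as you claim; your cocycle identity is also a genuine consequence of \eqref{eq:compute Hormand by trip}.

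The gap is in the final collapse, which you leave as ``bookkeeping'' and for which the toolkit you name is not sufficient. Expanding both H\"ormander indices by the second equality in \eqref{eq:compute Hormand by trip} and applying your cocycle identity with $(\alpha,\beta,\gamma,\delta)=(\Lambda_2(0),\Lambda,\Lambda_1(0),\Lambda_1(1))$ reduces the sum to $\iota(\Lambda_2(0),\Lambda,\Lambda_1(0))-\iota(\Lambda_2(1),\Lambda,\Lambda_1(1))$, i.e.\ you are stuck with $\Lambda$ in the \emph{middle} slot, and no further cocycle manipulation or vanishing of $\dim(\Lambda\cap\Lambda_i(t))$ terms will move it to the last slot, because the triple index is not symmetric in those arguments. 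The missing ingredient is the reflection identity $\iota(\beta,\Lambda,\alpha)+\iota(\beta,\alpha,\Lambda)=n$, valid whenever $\alpha\cap\Lambda=\beta\cap\Lambda=\{0\}$: on $\beta=\beta\cap(\alpha+\Lambda)$ one has $\mathscr{Q}(\beta,\alpha;\Lambda)=-\mathscr{Q}(\beta,\Lambda;\alpha)$ and $\ker\mathscr{Q}(\beta,\Lambda;\alpha)=\alpha\cap\beta$, so \eqref{eq:def of trip 2} gives $\iota(\beta,\alpha,\Lambda)=n-\iMor(\mathscr{Q}(\beta,\Lambda;\alpha))-\dim(\alpha\cap\beta)$ while $\iota(\beta,\Lambda,\alpha)=\iMor(\mathscr{Q}(\beta,\Lambda;\alpha))+\dim(\alpha\cap\beta)$. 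Note in particular that, contrary to your claim, the intersection-dimension corrections that actually appear are the $\dim(\Lambda_1(i)\cap\Lambda_2(i))$ terms, which need not vanish (the paths are allowed to intersect each other --- that is the content of the lemma); they cancel only through this identity. Once it is added, applying it at $i=0,1$ converts $\iota(\Lambda_2(0),\Lambda,\Lambda_1(0))-\iota(\Lambda_2(1),\Lambda,\Lambda_1(1))$ into $\iota(\Lambda_2(1),\Lambda_1(1),\Lambda)-\iota(\Lambda_2(0),\Lambda_1(0),\Lambda)$ and your argument closes correctly.
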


  \subsubsection{Spectral Flow}
  Introduced by Atiyah-Patodi-Singer \cite{APS76}, spectral flow measures eigenvalue crossings. Let $E$ be a real separable Hilbert space, and $\mathscr{CF}^{sa}(E)$ denote closed self-adjoint Fredholm operators with gap topology. For continuous $A: [0,1] \to \mathscr{CF}^{sa}(E)$, the spectral flow $\spfl{A_t; t \in [0,1]}$ counts signed eigenvalue crossings through $-\epsilon$ ($\epsilon > 0$ small).
  
  For each $A_t$, consider the orthogonal decomposition
  \begin{align*}
  E = E_{-}(A_t) \oplus E_0(A_t) \oplus E_{+}(A_t).
  \end{align*}
  Let $P_t$ be the orthogonal projector onto $E_0(A_t)$. At crossing $t_0$ where $E_0(A_{t_0}) \neq \{0\}$, define the crossing form
  \begin{align}\label{eq:def of cross form in sf}
  \operatorname{Cr}[A_{t_0}] \coloneqq P_{t_0} \frac{\partial}{\partial t} P_{t_0}: E_0(A_{t_0}) \to E_0(A_{t_0}).
  \end{align}
  
  A crossing is regular if $\operatorname{Cr}[A_{t_0}]$ is nondegenerate. Define
  \begin{align*}
  \sgn(\operatorname{Cr}[A_{t_0}]) \coloneqq \dim E_{+}(\operatorname{Cr}[A_{t_0}]) - \dim E_{-}(\operatorname{Cr}[A_{t_0}]).
  \end{align*}
  
  Assuming regular crossings, the spectral flow becomes
  \begin{align}\label{eq:compute sf by crossing form}
  \spfl{A_t; t \in [0,1]} = \sum_{t_0 \in \mathcal{S}_*} \sgn(\operatorname{Cr}[A_{t_0}]) - \dim E_{-}(\operatorname{Cr}[A_0]) + \dim E_{+}(\operatorname{Cr}[A_1])
  \end{align}
  where $\mathcal{S}_* = \mathcal{S} \cap (a,b)$ contains crossings in $(a,b)$.
  
  For the sake of the reader we list some properties of the spectral flow that we shall frequently use in the paper.
  \begin{prop}\label{prop:properties of sf}
    \begin{itemize}
	  \item[(1)]  Given a continuous map
	  \begin{align}
	  \bar{A}:[0,1] \rightarrow \mathscr{C}^0\left([a, b] ; \mathscr{C} \mathscr{F}^{s a}(E)\right) \text { where } \bar{A}(s)(t):=\bar{A}^s(t)
	  \end{align}	
	  such that $\operatorname{dim} \operatorname{ker} \bar{A}^s(a)$ and $\operatorname{dim} \operatorname{ker} \bar{A}^s(b)$ are both independent on $s$, then	
	  \begin{align}\label{eq:Stratum homotopy relative to the ends}
	  \operatorname{sf}\left(\bar{A}_t^0 ; t \in[a, b]\right)=\operatorname{sf}\left(\bar{A}_t^1 ; t \in[a, b]\right)
	  \end{align}
	  \item[(2)]  If $A^1, A^2 \in \mathscr{C}^0\left([a, b] ; \mathscr{C} \mathscr{F}^{s a}(E)\right)$ are such that $A^1(b)=A^2(a)$, then
	  \begin{align}\label{eq:Path additivity}
	  \operatorname{sf}\left(A_t^1 * A_t^2 ; t \in[a, b]\right)=\operatorname{sf}\left(A_t^1 ; t \in[a, b]\right)+\operatorname{sf}\left(A_t^2 ; t \in[a, b]\right)
	  \end{align}	
	  where $*$ denotes the usual catenation between the two paths.
	  \item[(3)] If $A \in \mathscr{C}^0([a, b] ; \operatorname{GL}(E))$, then 
      \begin{align}\label{eq:Nullity}
        \operatorname{sf}\left(A_t ; t \in[a, b]\right)=0.
      \end{align}
      \item[(4)] If $\hat{\Omega}=\left\{t \mid 0 \leq t \leq 1\right.$ and $\left.\dim E_0\left(A_t\right) \neq 0\right\}$ then
      \begin{align}\label{eq:bounded for sf}
      \left|\spfl{A_t; t\in[0,1]}\right| \leq \sum_{t \in \hat{\Omega}} \dim E_0\left(A_t\right) .
      \end{align}
      
  \end{itemize}

  \end{prop}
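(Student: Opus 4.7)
The plan is to derive each of the four properties from the characterization of spectral flow as a signed count of regular eigenvalue crossings, together with the standard density fact that any path in $\mathscr{CF}^{sa}(E)$ can be perturbed rel endpoints into one whose crossings are all regular. This reduction is the usual starting point (cf.\ \cite{RS93}) and turns the explicit formula \eqref{eq:compute sf by crossing form} into the working definition throughout the argument.

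For property (1), I would first perturb both $\bar{A}^0$ and $\bar{A}^1$ to have only regular crossings while keeping the endpoint kernels pinned, which is possible precisely because $\dim\ker\bar{A}^s(a)$ and $\dim\ker\bar{A}^s(b)$ are already assumed constant in $s$. I would then view the two-parameter family $\{\bar{A}^s(t)\}_{(s,t)\in[0,1]\times[a,b]}$ as a rectangle in parameter space and apply the standard fact that the spectral flow around the boundary of a simply-connected two-parameter family in $\mathscr{CF}^{sa}(E)$ sums to zero. Since the kernel dimensions are constant in $s$ at $t=a$ and at $t=b$, there are no crossings along the two vertical edges of the rectangle, so the top and bottom horizontal contributions must cancel, yielding \eqref{eq:Stratum homotopy relative to the ends}. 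Property (2) follows immediately from \eqref{eq:compute sf by crossing form}: after a regularizing perturbation of the catenation $A^1 * A^2$, the signed crossings split naturally into those belonging to $A^1$ and those belonging to $A^2$, and the boundary terms in \eqref{eq:compute sf by crossing form} reassemble correctly. Property (3) is trivial: if $A_t \in \mathrm{GL}(E)$ for every $t$, then $E_0(A_t)=\{0\}$ throughout, so there are no crossings and \eqref{eq:compute sf by crossing form} evaluates to zero. Property (4) is obtained from the termwise bound $|\sgn(\operatorname{Cr}[A_{t_0}])| \leq \dim E_0(A_{t_0})$ substituted into \eqref{eq:compute sf by crossing form}.

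The hard part will be the relative transversality step in property (1): perturbing a family of paths while simultaneously preserving the prescribed endpoint kernel dimensions requires a careful Sard-type argument in the gap topology on $\mathscr{CF}^{sa}(E)$. Once this technical point is handled, the rectangular homotopy is elementary and properties (2)--(4) require no further work. Since these properties are well-documented in the literature in essentially this form, I expect the proof will ultimately consist of a short verification of the axioms followed by pointers to \cite{RS93} and the references therein.
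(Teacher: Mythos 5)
The paper itself offers no proof of this proposition: it is stated as a list of standard properties of the spectral flow, quoted from the literature (Robbin--Salamon, Atiyah--Patodi--Singer and the references in Section 4), so there is no internal argument to compare yours against. Your sketch is the standard one and is essentially correct for (1)--(3), but two points need tightening. First, in (1) you claim there are ``no crossings along the two vertical edges'' because the endpoint kernel dimensions are constant in $s$; in the situation this paper actually uses (e.g.\ the persistent kernel at $\lambda=0$ coming from translation invariance), that constant dimension is \emph{positive}, so every point of a vertical edge is a degenerate point. The correct statement is that a constant kernel dimension forces the spectral flow along that edge to vanish --- the zero eigenvalue persists with fixed multiplicity and no other eigenvalue can reach $0$ without raising $\dim\ker$, so nothing crosses the level $-\eps$ --- which is what makes the rectangle argument close. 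Second, your derivation of (4) by first perturbing to regular crossings and then applying $|\sgn(\operatorname{Cr}[A_{t_0}])|\le\dim E_0(A_{t_0})$ has a gap: the perturbation changes both the crossing set $\hat{\Omega}$ and the kernels, so the termwise bound for the perturbed path does not immediately bound the right-hand side written for the \emph{original} path. The clean argument is local and needs no genericity: around each $t_0\in\hat{\Omega}$ choose a small interval on which the spectrum of $A_t$ in $(-\eps,\eps)$ consists of at most $\dim E_0(A_{t_0})$ eigenvalues (continuity of the spectral projection), so the net flow across that interval is bounded in absolute value by $\dim E_0(A_{t_0})$; outside these intervals there are no crossings and the flow is zero. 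With those two repairs your outline matches the standard proofs these citations point to.
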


  \subsubsection{Spectral flow formula for general Hamiltonian system}
  Borrowing the notation of \cite{HP17} for $\lambda \in[0,1]$ we denote by $\gamma_{(\tau, \lambda)}$ be the (primary) fundamental solution of the following linear Hamiltonian system
  \begin{align}\label{eq:linear h.s.eq.}
  \begin{cases}
	  \dot{\gamma}(t)=J B_\lambda(t) \gamma(t), \quad t \in \mathbb{R} \\
  \gamma(\tau)=I
  \end{cases}
  \end{align}
  We introduce the following condition
  
  We define, respectively, the stable and unstable subspaces as follows
  \begin{align*}
  E_\lambda^s(\tau):=\left\{v \in \mathbb{R}^{2 n} \mid \lim _{t \rightarrow+\infty} \gamma_{(\tau, \lambda)}(t) v=0\right\} \quad \text { and } \quad E_\lambda^u(\tau):=\left\{\left.v \in \mathbb{R}^{2 n}\right|_{t \rightarrow-\infty} \gamma_{(\tau, \lambda)}(t) v=0\right\}
  \end{align*}
  We observe that, for every $(\lambda, \tau) \in[0,1] \times \mathbb{R}, E_\lambda^s(\tau), E_\lambda^u(\tau) \in L(n)$. (For further details, we refer the interested reader to \cite{CH07,HP17}and references therein). Setting
  \begin{align*}
  \begin{array}{l} 
  E_\lambda^s(+\infty):=\left\{v \in \mathbb{R}^{2 n} \mid \lim _{t \rightarrow+\infty} \exp \left(t B_\lambda(\infty)\right) v=0\right\} \\
  E_\lambda^u(-\infty):=\left\{v \in \mathbb{R}^{2 n} \mid \lim _{t \rightarrow-\infty} \exp \left(t B_\lambda(\infty)\right) v=0\right\}
  \end{array}
  \end{align*}
  and assuming that condition (L1) holds, then we get that
  \begin{align*}
  \lim _{\tau \rightarrow+\infty} E_\lambda^s(\tau)=E_\lambda^s(+\infty) \quad \text { and } \quad \lim _{\tau \rightarrow-\infty} E_\lambda^u(\tau)=E_\lambda^u(-\infty)
  \end{align*}
  where the convergence is meant in the gap (norm) topology of the Lagrangian Grassmannian. (Cfr. \cite{AO01} for further details).

\begin{prop}\label{pro:Hp sf=malsov inde }
    Under the previous notation and if condition (L1) holds, then 
    \begin{align}
        \iCLM(E^s_\lambda(0),E_\lambda^u(0),\lambda\in[0,1])=-\spfl{\mathcal{F}_\lambda,\lambda\in[0,1]}.
    \end{align}
\end{prop}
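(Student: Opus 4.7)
The plan is to establish this identity via the standard crossing-form method, equating local contributions to the Maslov index and the spectral flow at each intersection. The foundational observation is that a solution $z$ of the Hamiltonian system $\dot z(t) = JB_\lambda(t) z(t)$ lies in $\ker\mathcal{F}_\lambda$ if and only if it decays at both $\pm\infty$, which happens exactly when $z(0) \in E^s_\lambda(0)\cap E^u_\lambda(0)$. Hence the evaluation map $z\mapsto z(0)$ is a linear isomorphism between $\ker\mathcal{F}_\lambda$ and $E^s_\lambda(0)\cap E^u_\lambda(0)$, so the two sides of the claimed equality share the same crossing instants $\lambda_0\in[0,1]$ and the same crossing multiplicities.

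First I would invoke a genericity argument to reduce to regular crossings. By property \eqref{eq:Stratum homotopy relative to the ends} for the spectral flow and the stratum-homotopy property of $\iCLM$, an arbitrarily small perturbation $B_\lambda(t)\rightsquigarrow B_\lambda(t)+\varepsilon R(\lambda,t)$ (with $R$ vanishing at $\lambda=0,1$ and decaying in $t$, chosen so condition (L1) persists) leaves both sides unchanged while making every crossing regular and interior. It then suffices to match the local contributions at a single regular crossing $\lambda_0\in(0,1)$.

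At such a crossing, pick $z\in\ker\mathcal{F}_{\lambda_0}\setminus\{0\}$ and set $v=z(0)\in E^s_{\lambda_0}(0)\cap E^u_{\lambda_0}(0)$. The spectral-flow crossing form \eqref{eq:def of cross form in sf}, transported through the above isomorphism, evaluates to
\[
\langle \partial_\lambda \mathcal{F}_\lambda|_{\lambda_0} z, z\rangle_{L^2} \;=\; -\int_{-\infty}^{+\infty}\langle \partial_\lambda B_\lambda(t)|_{\lambda_0}\,z(t),z(t)\rangle\,dt.
\]
For the Maslov crossing form \eqref{eq:def of crossing form in Maslov} applied to the pair $(E^s_\lambda(0),E^u_\lambda(0))$, fix a Lagrangian complement $W$ of $E^u_{\lambda_0}(0)$, and for $\lambda$ near $\lambda_0$ decompose $v = v^u(\lambda)+w(\lambda)$ with $v^u(\lambda)\in E^u_\lambda(0)$ and $w(\lambda)\in W$; simultaneously $v^s(\lambda)\in E^s_\lambda(0)$ follows $v$. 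Differentiating the relations $\dot z^{s/u}_\lambda(t)=JB_\lambda(t)z^{s/u}_\lambda(t)$ in $\lambda$, applying the symplectic identity $\tfrac{d}{dt}\omega(z,\partial_\lambda z^{s/u}) = \omega(z,J\partial_\lambda B_\lambda z)$, and integrating by parts on $(-\infty,0]$ and $[0,\infty)$ (with boundary terms at $\pm\infty$ killed by the exponential decay guaranteed by (L1) and Lemma \ref{lem:matrix hyperbolic}) yields
\[
\omega(v,\dot w(\lambda_0)) \;=\; \int_{-\infty}^{+\infty}\langle \partial_\lambda B_\lambda(t)|_{\lambda_0}\,z(t),z(t)\rangle\,dt,
\]
which is precisely the negative of the spectral-flow form. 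Summing signatures over all crossings via \eqref{eq:compute maslov by cross form} and \eqref{eq:compute sf by crossing form} gives $\iCLM(E^s_\lambda(0),E^u_\lambda(0);\lambda\in[0,1]) = -\spfl{\mathcal{F}_\lambda;\lambda\in[0,1]}$.

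The main obstacle is the clean execution of the integration-by-parts step that ties the two crossing forms together: the integral must be split at $t=0$, the decay at $\pm\infty$ must be invoked to discard the boundary contributions there, and the symplectic pairing at $t=0$ must be matched with $\dot w(\lambda_0)$ in the fixed complement $W$. A secondary subtlety is the endpoint convention (the asymmetric $\coiMor$ versus $-\iMor$ terms in \eqref{eq:compute maslov by cross form} versus \eqref{eq:compute sf by crossing form}), which is most easily dealt with by arranging via the initial perturbation that both $\lambda=0$ and $\lambda=1$ are non-crossings, so that only the interior sum of signatures remains and the overall minus sign emerges directly from the local comparison above.
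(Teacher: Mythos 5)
Your argument is correct in outline, and it is essentially the standard proof of this statement: the paper itself gives no proof here but imports the result from Hu--Portaluri \cite{HP17}, whose argument is exactly the crossing-form comparison you describe (identify $\ker\mathcal{F}_{\lambda_0}$ with $E^s_{\lambda_0}(0)\cap E^u_{\lambda_0}(0)$ by evaluation at $0$, differentiate the solution families in $\lambda$, and integrate $\tfrac{\d}{\d t}\,\omega\bigl(z,\partial_\lambda z^{s/u}\bigr)=\langle\partial_\lambda B_\lambda z,z\rangle$ over the two half-lines). Two points should be tightened. First, your reduction is internally inconsistent: you require the perturbation $R$ to vanish at $\lambda=0,1$ (so that homotopy invariance rel endpoints applies) and simultaneously ask that $\lambda=0,1$ become non-crossings; if $\ker\mathcal{F}_0\neq\{0\}$ --- the typical situation in this paper, where $0$ is always an eigenvalue by translation invariance --- these demands are incompatible. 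The cleaner route is to keep degenerate endpoints and check the conventions directly: since the Maslov crossing form is the negative of the spectral-flow crossing form, one has $\coiMor(\Gamma(\cdot;a))=\dim E_-(\operatorname{Cr}[A_0])$ and $\iMor(\Gamma(\cdot;b))=\dim E_+(\operatorname{Cr}[A_1])$, so the boundary contributions in \eqref{eq:compute maslov by cross form} coincide term by term with those of the negated right-hand side of \eqref{eq:compute sf by crossing form}, and the identity survives degenerate endpoints. Second, since both Lagrangians in the pair $(E^s_\lambda(0),E^u_\lambda(0))$ move, the relevant crossing form is the difference of the two individual forms on the intersection; your single quantity $\omega(v,\dot w(\lambda_0))$ should be written out as $\omega\bigl(v,\partial_\lambda z^u_\lambda(0)\bigr)-\omega\bigl(v,\partial_\lambda z^s_\lambda(0)\bigr)=\int_{-\infty}^{0}\langle\partial_\lambda B_\lambda z,z\rangle\,\d t+\int_{0}^{+\infty}\langle\partial_\lambda B_\lambda z,z\rangle\,\d t$, which produces the full-line integral and hence exactly the negative of $\langle\partial_\lambda\mathcal{F}_\lambda z,z\rangle_{L^2}$. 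With these repairs the local comparison is correct and the proposition follows.
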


  \begin{prop}\label{prop:HP sf formula}
	  Under the previous notation and if condition (L1) holds, then \\
      (i) we have that 
	  \begin{align}\label{eq:HP sf formula}
				  -\spfl{\mathcal{F}_\lambda;\lambda\in[0,1]}=\iCLM(E_0^s(\tau),E_0^u(-\tau);[0,+\infty))-\iCLM(E^s_1(\tau),E_1^u(-\tau);[0,+\infty)),
		 \end{align}
         where $\mathcal{F}_\lambda:=-J\frac{\d}{\d x}-B_\lambda(x).$

(ii)For any $T>0$ and $\Lambda\in\Lag{2n}$, we have that
\begin{align}
    &-\spfl{\mathcal{F}_{T,\lambda};\lambda\in[0,1]}\\=&\iCLM(E_0^s(\Lambda,E_0^u(\tau);\tau\in(-\infty,T])-\iCLM(\Lambda,E_1^u(\tau);\tau\in(-\infty,T])
    -\iCLM(\Lambda_0,E^u_\lambda(-\infty)),
\end{align}
where $\mathcal{F}_{T,\lambda}=-J\frac{\d}{\d x}-B_\lambda(x)$ with $\dom \mathcal{F}_{T,\lambda}=\left\{   z(t)\in W^{1,2}((-\infty,T]))|z(T)\in \Lambda \right\}$
	  \end{prop}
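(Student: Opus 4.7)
The plan is to prove both parts via the homotopy invariance of the Cappell--Lee--Miller index applied to a two-parameter family of Lagrangian pairs on a rectangle in the $(\tau,\lambda)$-plane, with Proposition \ref{pro:Hp sf=malsov inde } serving as the bridge from a Maslov index at fixed $\tau$ back to a spectral flow in $\lambda$. The asymptotic limits are controlled by condition (L1), which guarantees hyperbolicity of $JB_\lambda(\infty)$ uniformly in $\lambda$ and hence provides well-defined Lagrangian limits $E^s_\lambda(+\infty)$, $E^u_\lambda(-\infty)$ that are mutually transverse.

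For part (i), I would fix $T>0$ and consider the Lagrangian pair family $(E^s_\lambda(\tau), E^u_\lambda(-\tau))$ on the rectangle $[0,T]\times[0,1]$. Any two paths in the rectangle from $(0,0)$ to $(T,1)$ with matched endpoints yield the same Maslov index; decomposing along the two L-shaped boundary routes gives the identity
\begin{align}
&\iCLM(E^s_0(\tau),E^u_0(-\tau);\tau\in[0,T]) + \iCLM(E^s_\lambda(T),E^u_\lambda(-T);\lambda\in[0,1]) \\
={}&\iCLM(E^s_\lambda(0),E^u_\lambda(0);\lambda\in[0,1]) + \iCLM(E^s_1(\tau),E^u_1(-\tau);\tau\in[0,T]).
\end{align}
The second summand on the right-hand side equals $-\spfl{\mathcal{F}_\lambda;\lambda\in[0,1]}$ by Proposition \ref{pro:Hp sf=malsov inde }. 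It then remains to let $T\to+\infty$ and show that the second summand on the left-hand side vanishes for $T$ sufficiently large: by the gap-topology convergence stated after the definition of $E^{s/u}_\lambda(\pm\infty)$, we have $E^s_\lambda(T)\to V^-(JB_\lambda(\infty))$ and $E^u_\lambda(-T)\to V^+(JB_\lambda(\infty))$ uniformly in $\lambda\in[0,1]$, and these limiting subspaces are transverse by hyperbolicity, hence the path $\lambda\mapsto(E^s_\lambda(T),E^u_\lambda(-T))$ eventually has no crossings and contributes $0$. Rearranging yields formula (i).

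For part (ii), I would use the analogous rectangle $[-S,T]\times[0,1]$ but with the left factor of the Lagrangian pair fixed at $\Lambda$ rather than moving with $\tau$: consider the family $(\Lambda, E^u_\lambda(\tau))$. The same L-shaped decomposition produces
\begin{align}
&\iCLM(\Lambda,E^u_0(\tau);\tau\in[-S,T]) + \iCLM(\Lambda,E^u_\lambda(T);\lambda\in[0,1]) \\
={}&\iCLM(\Lambda,E^u_\lambda(-S);\lambda\in[0,1]) + \iCLM(\Lambda,E^u_1(\tau);\tau\in[-S,T]).
\end{align}
The right-edge term $\iCLM(\Lambda,E^u_\lambda(T);\lambda\in[0,1])$ coincides with $-\spfl{\mathcal{F}_{T,\lambda};\lambda\in[0,1]}$ through the analogue of Proposition \ref{pro:Hp sf=malsov inde } for the boundary-value operator $\mathcal{F}_{T,\lambda}$: indeed, a function in $\ker\mathcal{F}_{T,\lambda}$ is a solution of the linear Hamiltonian system that decays as $x\to-\infty$ (so $z(T)\in E^u_\lambda(T)$) and satisfies $z(T)\in\Lambda$, giving $\ker\mathcal{F}_{T,\lambda}\cong\Lambda\cap E^u_\lambda(T)$. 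Letting $S\to+\infty$ and invoking the gap convergence $E^u_\lambda(-S)\to E^u_\lambda(-\infty)$ then rearranges to formula (ii).

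The main technical obstacle will be twofold. First, one has to justify passing to the half-line limit inside the Maslov index: this requires showing that $\iCLM(E^s_0(\tau),E^u_0(-\tau);[0,\tau_0])$ stabilizes for large $\tau_0$, which follows from the eventual transversality established via (L1) but must be stated carefully to make the half-line index $\iCLM(\cdot,\cdot;[0,+\infty))$ well-defined. Second, in part (ii) one needs the boundary version of Proposition \ref{pro:Hp sf=malsov inde }; while morally identical to the cited result, the proof must verify that the crossing form for the spectral flow of $\mathcal{F}_{T,\lambda}$ and the crossing form for the Maslov path $\lambda\mapsto(\Lambda,E^u_\lambda(T))$ at a common zero agree up to sign, which is the symplectic calculation identifying $\omega(z,\partial_\lambda z)|_{x=T}$ with the derivative of the defining quadratic form of $\ker\mathcal{F}_{T,\lambda}$. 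Once these two points are in place, the rest is bookkeeping of signs and use of reversal/path additivity from Section \ref{sec:maslov}.
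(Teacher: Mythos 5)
The paper's own ``proof'' of this proposition is a one-line citation of \cite[Theorem 1]{HP17} together with the reversal property of the Maslov index, so your proposal is doing genuinely more work: you are reconstructing the rectangle-homotopy argument that underlies the cited theorem. Your route for part (i) is correct and is essentially the standard proof of the HP17 formula: the pair $(E^s_\lambda(\tau),E^u_\lambda(-\tau))$ on $[0,T]\times[0,1]$, the two L-shaped boundary routes, identification of the left edge with $-\spfl{\mathcal{F}_\lambda;\lambda\in[0,1]}$ via Proposition \ref{pro:Hp sf=malsov inde }, and vanishing of the right edge for large $T$ by uniform (in $\lambda$, using compactness of $[0,1]$) gap-convergence to the transverse pair $\bigl(E^s_\lambda(+\infty),E^u_\lambda(-\infty)\bigr)$. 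Note that by folding the time-reversal into the choice of pair $(E^s_\lambda(\tau),E^u_\lambda(-\tau))$ you have absorbed the ``reversal property'' step that the paper invokes separately; the two presentations are equivalent. The stabilization of the finite-interval index to the half-line index, which you flag as a technical obstacle, follows from the same eventual transversality and is the right thing to worry about.

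For part (ii) there is a point you should resolve before calling the proof complete: your rectangle identity, combined with the identification $\iCLM(\Lambda,E^u_\lambda(T);\lambda\in[0,1])=-\spfl{\mathcal{F}_{T,\lambda};\lambda\in[0,1]}$, yields
\begin{align}
-\spfl{\mathcal{F}_{T,\lambda};\lambda\in[0,1]}=\iCLM(\Lambda,E^u_\lambda(-\infty);\lambda\in[0,1])-\Bigl[\iCLM(\Lambda,E_0^u(\tau);\tau\in(-\infty,T])-\iCLM(\Lambda,E_1^u(\tau);\tau\in(-\infty,T])\Bigr],
\end{align}
which is the \emph{negative} of the right-hand side as stated in the proposition. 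The discrepancy arises because in part (i) the ``kernel-carrying'' endpoint sits at the start of the $\tau$-path ($\tau=0$), whereas in part (ii) it sits at the end ($\tau=T$), which reverses the orientation of the boundary circuit. Since the printed statement of (ii) is visibly corrupted (the unmatched parenthesis in $E_0^s(\Lambda$, the unexplained $\Lambda_0$, and the missing parameter range on the last index), it is quite possible the error lies in the statement rather than in your derivation; but you must either verify the sign in the boundary-value analogue of Proposition \ref{pro:Hp sf=malsov inde } by the crossing-form computation you describe, or conclude that the stated formula needs its signs corrected. A second, smaller gap: the convergence $\iCLM(\Lambda,E^u_\lambda(-S);\lambda\in[0,1])\to\iCLM(\Lambda,E^u_\lambda(-\infty);\lambda\in[0,1])$ as $S\to\infty$ is not automatic when $\Lambda\cap E^u_\lambda(-\infty)\neq\{0\}$ for some $\lambda$; one needs to rule out accumulation of crossings of the $\tau$-paths near $\tau=-\infty$ (this is exactly why the correction term appears), and this deserves an explicit sentence.
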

  \begin{proof}
	  The proof directly follows by \cite[Theorem 1]{HP17} and Reversal property of Maslov index.
  \end{proof}

%


\vspace{1cm}
\noindent
\textsc{Jing Li}\\
School of Mathematics and Statistics, Linyi University\\
 Linyi, Shandong \\
The People's Republic of China\\
E-mail:\ \email{lijing241301@126.com}

\vspace{1cm}
\noindent
\textsc{Dr. Qin Xing}\\
School of Mathematics and Statistics, Linyi University\\
 Linyi, Shandong \\
The People's Republic of China\\
E-mail:\ \email{xingqin@lyu.edu.cn}

\vspace{1cm}
\noindent
\textsc{Dr. Ran Yang}\\
School of Science\\
East China  University of Technology\\
Nanchang, Jiangxi, 330013\\
The People's Republic of China \\
E-mail:\ \email{201960124@ecut.edu.cn}

\end{document}